\theoremstyle{plain}
\newtheorem{Teo}{Theorem}[section]
\newtheorem{Def}[Teo]{Definition}
\newtheorem{Lema}[Teo]{Lemma}
\newtheorem{Prop}[Teo]{Proposition}
\newtheorem{Cor}[Teo]{Corollary}
\newtheorem{maintheorem}{Theorem}
\begin{document}

\title[Statistical properties for equilibrium states]{On statistical properties for equilibrium states of partially hyperbolic horseshoes}

\author{V. Ramos }
\address{Vanessa Ramos \\ Centro de ci\^{e}ncias exatas e tecnologia-ufma\\ Av. dos Portugueses, 1966, Bacanga\\  65080-805 São Lu\'{i}s\\Brasil}
\email{vramos@impa.br}

\author{J. Siqueira}
\address{Jaqueline Siqueira\\ Centro de Matem\'{a}tica da Universidade do Porto\\ Rua do
Campo Alegre 687\\ 4169-007 Porto\\ Portugal}
\email{jaqueline.rocha@fc.up.pt}

\date{}
\thanks{The authors were supported by CNPq-Brazil.}
\keywords{Equilibrium states; partial hyperbolicity; decay of correlations; central limit theorem.}
\subjclass[2010]{37A05, 37A25}

\pagenumbering{arabic}

\begin{abstract}
We derive some statistical properties for equilibrium states of partially hyperbolic horseshoes. We define a projection map associated to the horseshoe and prove a spectral gap for its transfer operator  acting on the space of H\"older continuous observables. From this we deduce an exponential decay of correlations and a central limit theorem. We finally extend these results to the horseshoe. 

\end{abstract}

\maketitle

\tableofcontents




\section{Introduction}
\label{introducao}


Describing the behavior of the orbits of a dynamical system can be a challenging task, especially for systems that have a complicated topological and geometrical structure. A very useful way to obtain features of such systems is via invariant probability measures. For instance, by Birkhoff's Ergodic Theorem, almost every initial condition in each ergodic component of an invariant measure  has the same statistical distribution in space. 
When the system admits more than one invariant probability measure, an efficient way to chose an interesting one is to select those that have regular Jacobians, which are called equilibrium states. We formally define an equilibrium state with respect to a potential as follows.

\begin{Def}\label{est eq}
Consider a continuous map $F: \Omega \to \Omega$ on a compact metric space $\Omega$. We say 
that an $F$-invariant probability measure $\mu$ is an equilibrium state for $F$ w.r.t. a 
continuous potential $\phi: \Omega \to \mathbb{R}$ if it satisfies
          $$h_{\mu} (F) + \int \phi \, d\mu  = \sup_\eta \left\{ h_{\eta} (F) + \int \phi \, d\eta 
\right\},  $$
where the supremum is taken over all $F$-invariant probability measures.
\end{Def}

By studying the decay of correlations of an equilibrium measure, one can obtain significant information regarding  the system: how fast memory of the past is lost by the system as time evolves. In particular, this gives the speed at which the equilibrium is reached.

However, while standard counterexamples show that in general there is no specific rate at which
this loss of memory occurs, it is sometimes possible to obtain specific rates of decay which depend only on the map $F$, as long as the observables belong to some appropriate space of functions.

Another way to characterize weak correlations of successive observations is  given by a central limit theorem: the probability of a given deviation of the average values of an observable along an orbit from the asymptotic average is essentially given by a normal distribution.


In a pioneering work \cite{FS}, Ferrero and Schmitt applied the theory of projective metrics, due to Birkhoff \cite{Birkhoff}, to the transfer operator for expanding maps, thus obtaining spectral properties. For one dimensional piecewise expanding maps, an exponential decay of correlations was proved by Liverani \cite{Liverani2} and a central limit theorem was proved by Keller \cite{Ke}.   
In the context of volume preserving hyperbolic maps, Liverani \cite{Liverani} established  exponential decay of correlations for the SRB measure. In the more general context of hyperbolic attractors, Viana \cite{Viana} proved the exponential decay of correlations and a central limit theorem. The latter was inspired by the work of D\"urr and  Goldstein \cite{DG}.

In the context of non-uniformly hyperbolic maps we may cite the independent works of Young \cite{Young} and Keller and Nowicki \cite{KN} that used towers extensions and cocycles to prove exponential decay of correlations for quadratic maps. In the same context Castro and Varandas \cite{CV} obtained statistical properties for the unique equilibrium state associated to a class of non-uniformly expanding maps. In this work they use the projective metrics approach. 

For a class of partially hyperbolic systems semiconjugated to nonuniformly expanding maps Castro and Nascimento \cite{CN} proved exponential decay of correlations and a central limit theorem for the maximal entropy measure. 

In this work we address the problem of studying statistical properties for the unique equilibrium state of partially hyperbolic horseshoes. The family of three dimensional horseshoes was introduced by D\'iaz, Horita, Rios and Sambarino in \cite{diazetal} and the uniqueness of equilibrium states associated to H\"older continuous potentials with small variation was proved by Rios and Siqueira in \cite{RS15}. 

We start by studying a two dimensional abstract map obtained from the horseshoe by projecting  its inverse on two center-stable leaves. We refer to this map as the {\em projection map}. We construct  metrics  with respect to which the Perron-Frobenius operator associated to the projection map is a contraction. Such a contraction allows us to obtain a spectral gap property on the space of H\"older continuous observables.  From this we deduce exponential decay of correlations and a central limit theorem for the equilibrium state associated to the projection map.
Finally we show that the equilibrium state of the horseshoe carries the same statistical properties.

The paper is organized as follows.
In Section~\ref{Definitions} we describe both  the horseshoe and its projection map and we give a precise formulation of the statistical properties of its equilibrium. We also define the transfer operator associated to the projection map and state the spectral gap property. In Section~\ref{cones} we give a brief review of the theory of projective metrics in cones. This will be used as a key  tool to obtain the spectral gap theorem, which we prove in Section~\ref{gap}. In Section~\ref{decay for G} we derive the exponential decay of correlations and a central limit theorem for the unique equilibrium of the projection map. Finally, in Section~\ref{decay for F} we extend the results obtained for the projection map to the horseshoe.  


\section{Definitions and main results}\label{Definitions}

We start  describing the family of three dimensional horseshoes introduced by D\'iaz, Horita, Rios and Sambarino in  \cite{diazetal}.   
Let $R= [0,1]\times[0,1]\times[0,1]\subset\mathbb{R}^3$ be the cube in $\mathbb{R}^3$ and consider the parallelepipeds: 
$$
\tilde{R}_0 =[0,1]\times [0,1]\times [0,1/6]
\qquad \mbox{and} \qquad
\tilde{R} _1=[0,1]\times [0,1]\times [5/6,1].
$$ 
The horseshoe map is defined on $\tilde{R}_0$ and $\tilde{R}_1$ as follows  
   $$ F_{0}(x,y,z):=F_{| \tilde{R}_{0} }(x,y,z) =(\rho x , f(y),\beta z),$$
where $0 < \rho <{1/3}$, $\beta> 6$ and  $f(y) =\frac {1}{1 - (1-\frac{1}{y})e^{-1}}. $ 
                      
\begin{figure}[!htb]
\centering
\includegraphics[scale=0.3]{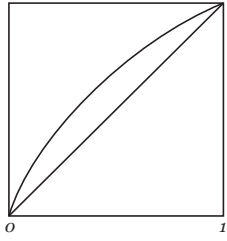}
\caption{The function $f$.}\label{tempoum}
\end{figure}
And 
$$F_{1}(x,y,z)=F_{| \tilde{R}_{1} }(x,y,z) = \Big(\frac{3}{4}- \rho x , \sigma (1 - y) ,\beta_{1} \Big(z - \frac{5}{6} \Big)\Big),$$
where  $0<\sigma< {1/3}$ and $3< \beta_1 < 4$.

Then, for $X \in R$, we have 
\begin{equation}
\label{Fdeles}
F(X) = \left\{
\begin{array}{lcl}
F_0(X) & \mbox{if} & X \in \tilde R_0\\
F_1(X) & \mbox{if} & X \in \tilde R_1.
\end{array}
\right.
\end{equation}

If $X \in R$ but does not belong to $\tilde{ R}_0 $ or $\tilde{ R}_1 $, then $X$ will be mapped injectively outside $R$.

We point out that besides we refer simply to $F$, we have described a family of maps that depends on the parameters $\rho, \beta, \beta_1$ and $\sigma$. We consider fixed parameters satisfying conditions above.  

In figure~\ref{ferradura2} we see the steps of the construction of the horseshoe.  

\begin{figure}[!htb]
\centering
\includegraphics[scale=0.4]{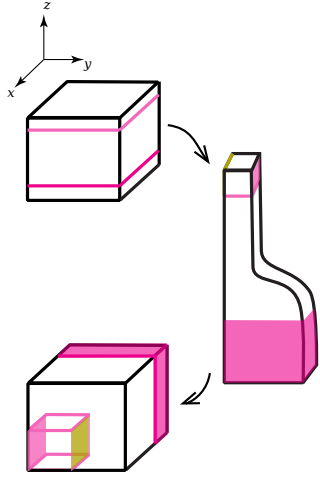}
\caption{The horseshoe $F$}\label{ferradura2}
\end{figure}
Let $\Omega$ be the maximal invariant set under $F$ of the union of the parallelepipeds $\tilde{ R}_0$ and $ \tilde {R}_1$:
$$\Omega = \bigcap_{n\in \mathbb{Z}} F^{n}(\tilde{ R}_0 \cup \tilde {R}_1).$$

In \cite{diazetal} it was shown that  the maximal invariant set $\Omega$ is partially hyperbolic, with one dimensional central direction, parallel to the $y$-axis. The central direction presents contractive and expanded  behavior. The horizontal direction is contractive while the vertical direction, parallel to the $z$-axis, is expanding.

The uniqueness of equilibrium states for the horseshoe $F$ associated to potentials with small variation was proved in \cite{RS15}. The main goal of this work is to study the statistical behavior of this equilibrium. Here we state the result in \cite{RS15}. Let $\omega= \frac{1 + \sqrt{5}}{2}$.  

\begin{Teo}
Let $F: \tilde{R}_0 \cup \tilde{R}_1 \to R$ be the three dimensional partially hyperbolic horseshoe defined above. Let $ \phi: \tilde{R}_0 \cup \tilde{R}_1  \to \mathbb{R}$ be a H\"older continuous potential with $\sup \phi - \inf \phi < \frac{\log{\omega}}{2}$. Assume that $\phi$ does not depend on the $z$-coordinate in each set $\tilde{R}_0$ and $\tilde{R}_1 $. Then there exists a unique equilibrium state $\mu_{\phi}$ for the system $F$ with respect to the potential $\phi$.
\end{Teo}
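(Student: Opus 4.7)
The plan is to reduce the problem from the three-dimensional horseshoe to a two-dimensional non-uniformly expanding system by collapsing the unstable direction, apply the thermodynamic formalism for non-uniformly expanding maps on the quotient, and then lift the resulting equilibrium state back to $F$. The key structural observation that makes such a reduction possible is that $\phi$ is assumed independent of the $z$-coordinate on each $\tilde R_i$, so it descends to a well-defined H\"older potential $\tilde\phi$ on the quotient obtained by identifying points lying on the same vertical segment inside $\tilde R_0 \cup \tilde R_1$. The induced quotient map $G$ acts on (two copies of) the unit square, and its inverse branches form an iterated function system that is non-uniformly expanding: the $x$-direction is uniformly expanded by $1/\rho$ or $1/\sigma$, while the $y$-direction is expanded by $1/f'$, with the sole obstruction to uniform expansion being the neutral fixed point of $f$ at $y=1$.

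With the projection set up, I would invoke the thermodynamic formalism for non-uniformly expanding maps, in the spirit of Oliveira, Varandas--Viana and Castro--Varandas, to produce a unique equilibrium state $\nu_{\tilde\phi}$ for $G$ with respect to $\tilde\phi$. The standard input required by that machinery is a \emph{pressure gap}: the topological pressure must strictly dominate the pressure contribution from the non-expanding region, or equivalently a condition of the form $\sup\tilde\phi - \inf\tilde\phi$ being small relative to the topological entropy carried away from the neutral point. Since the combinatorial core of the horseshoe introduced in \cite{diazetal} has topological entropy $\log\omega$ (the Fibonacci number appearing from the admissibility condition on symbol sequences at $y$-positions near $1$), and the orbits accumulating on the neutral fixed point can be shown to support at most half of this entropy, a counting argument converts the abstract pressure gap into the explicit quantitative bound $\sup\phi - \inf\phi < \tfrac{1}{2}\log\omega$ stated in the hypothesis.

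Finally, I would lift $\nu_{\tilde\phi}$ to an $F$-invariant probability $\mu_\phi$ on $\Omega$. Because $\phi$ is constant along unstable (vertical) fibers, a lift via a disintegration along unstable leaves --- for instance using a Rokhlin-type decomposition coming from the natural extension of $G$, or directly via an unstable holonomy argument exploiting the product structure of local stable and unstable manifolds --- automatically satisfies $h_{\mu_\phi}(F) = h_{\nu_{\tilde\phi}}(G)$ and $\int\phi\,d\mu_\phi = \int\tilde\phi\,d\nu_{\tilde\phi}$. Uniqueness of $\mu_\phi$ follows from uniqueness of $\nu_{\tilde\phi}$ together with the observation that any $F$-equilibrium state of $\phi$ projects to a $G$-equilibrium state of $\tilde\phi$, so two distinct lifts would yield two distinct projections.

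The main obstacle I anticipate is precisely the verification of the pressure gap: one must turn the smallness hypothesis $\sup\phi - \inf\phi < \tfrac{1}{2}\log\omega$ into a quantitative separation sharp enough to run the Ruelle--Perron--Frobenius machinery on the non-uniformly expanding quotient $G$. This requires careful control of the entropy carried by orbits that accumulate on the neutral fixed point of $f$, for which the Fibonacci-like symbolic coding from \cite{diazetal} should supply the necessary combinatorial backbone, together with an induction scheme that replaces $G$ by its first return to a uniformly hyperbolic subregion.
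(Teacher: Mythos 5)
Your overall architecture --- collapse the unstable direction, run thermodynamic formalism for the resulting two-dimensional projection map, and lift the equilibrium back through the semiconjugacy --- is exactly the strategy of \cite{RS15}, which is where this theorem is actually proved (the present paper only quotes it). The gap is in the core analytic step: you mischaracterize the quotient dynamics. The central branch of the projection map is $g_0=f^{-1}$ with $f(y)=\bigl(1-(1-1/y)e^{-1}\bigr)^{-1}$, and a direct computation gives $f'(1)=e^{-1}$ while $f'(y)\to e$ as $y\to 0^{+}$. There is no neutral fixed point: both fixed points of $g_0$ are hyperbolic, and $g_0$ is genuinely \emph{contracting} (derivative as small as $e^{-1}$) on a definite subinterval, not merely non-expanding at one point. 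Hence $G$ contracts the central direction on an open region of $R_1$ and $R_3$ --- this is why the paper records $\left\| DG^{-1}|_{R_1}\right\|=\left\| DG^{-1}|_{R_3}\right\|=e$ and stresses that the central behavior is both contracting and expanding. The map is therefore not a non-uniformly expanding map in the sense required by the Oliveira, Varandas--Viana or Castro--Varandas formalism, and that machinery cannot be invoked off the shelf; the whole point of \cite{RS15} (and of the cone construction in Sections 3--4 here) is to build the Ruelle--Perron--Frobenius theory by hand for a map with honest central contraction, playing the combinatorial expansion of the three-rectangle transition structure against it.

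Two further steps are asserted rather than proved. First, the bound $\sup\phi-\inf\phi<\tfrac12\log\omega$ does not arise from ``orbits accumulating on the neutral fixed point carrying at most half the entropy'': $\log\omega$ is the entropy of the subshift $\Sigma_A$ on three symbols with Fibonacci-type transition matrix to which $G$ is only \emph{semi}conjugate (an entire segment is coded by the constant sequence $(111\cdots)$), and the small-variation hypothesis is what prevents equilibrium states from concentrating on the set where the coding and the expansion degenerate. Your proposal never confronts this non-injectivity, which is precisely where uniqueness could fail. Second, the lift: you need that the fibers collapsed by $\pi$ are uniformly contracted by $F^{-1}$, so that entropy is preserved (Ledrappier--Walters type argument) and the $F$-invariant lift of a $G$-invariant measure exists and is unique; this is true here, but it requires an argument about the fiber dynamics, not only the observation that $\phi$ is constant along fibers.
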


We consider potentials $\phi$ as above and additionally we assume that the H\"older constant of $\exp(\phi)$ is small. The explicit condition will be stated in Section~\ref{gap}. We point out that this is an open condition which includes, for instance, constant potentials.   

For the equilibrium state $\mu _{\phi}$ of the system $(F,\phi)$  we will establish exponential decay of correlations for H\"older continuous observables. 

\begin{maintheorem}\label{decaimento F}
The equilibrium state $\mu_{\phi}$ has exponential decay of correlations for H\"older continuous observables: there exists  a constant $0<\tau < 1$ such that for all $\varphi \in L^{1}(\mu_{\phi}), \psi \in C^{\alpha}(\tilde{R}_0\cup\tilde{R}_1)$ there exists $K:=K(\varphi, \psi)>0$ satisfying 
$$\left|  \int \left(\varphi \circ F^n\right)\psi \, d\mu_{\phi} - \int \varphi \, d\mu_{\phi}\int\psi \, d\mu_{\phi} \right|   \leq \ K \cdot \tau^{n}  \quad \mbox{for every} \ n\geq 1. $$
\end{maintheorem}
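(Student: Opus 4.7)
My plan is to transfer the exponential decay of correlations already established for the projection map $G$ in Section~\ref{decay for G} up to the horseshoe $F$, using the semiconjugacy $\pi\circ F^{-1}=G\circ\pi$ and the relation $\pi_*\mu_\phi=\bar\mu$ between the two equilibrium states. The essential geometric input is that the fibres of $\pi$ are pieces of unstable manifolds of $F$, uniformly contracted by $F^{-1}$ at some rate $\gamma^{-1}<1$, while $F^{-1}$ expands the stable direction of $F$ at rate $\rho^{-1}$.

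Given $\varphi\in L^1(\mu_\phi)$ and $\psi\in C^\alpha$, I would first use $F$-invariance to rewrite the correlation as $\int\varphi\cdot(\psi\circ F^{-n})\,d\mu_\phi - \int\varphi\,d\mu_\phi\int\psi\,d\mu_\phi$. Fix an intermediate time $k=k(n)<n$ and approximate $\psi\circ F^{-k}$ by a function $\bar\psi_k\circ\pi$ that factors through $\pi$: choosing a measurable section $p\mapsto x_p$ of $\pi$ and setting $\bar\psi_k(p):=\psi(F^{-k}(x_p))$, the H\"older regularity of $\psi$ together with the contraction of the fibres gives
\[
\|\psi\circ F^{-k}-\bar\psi_k\circ\pi\|_\infty \le \|\psi\|_{C^\alpha}\,\gamma^{-k\alpha}.
\]
Using $\pi\circ F^{-(n-k)}=G^{n-k}\circ\pi$ and disintegrating $\mu_\phi$ along the fibres of $\pi$, with $\tilde\varphi:=\mathbb{E}_{\mu_\phi}[\varphi\mid\pi]\in L^1(\bar\mu)$, the correlation reduces, up to an error of order $\|\varphi\|_{L^1}\|\psi\|_{C^\alpha}\gamma^{-k\alpha}$, to the correlation of $\tilde\varphi$ with $\bar\psi_k\circ G^{n-k}$ under $\bar\mu$. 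Applying the decay of correlations for $G$ bounds the latter by $\|\tilde\varphi\|_{L^1}\|\bar\psi_k\|_{C^\alpha}\tau_0^{n-k}$ for some $\tau_0\in(0,1)$.

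The main obstacle is that $\|\bar\psi_k\|_{C^\alpha}$ grows with $k$: the expansion of $F^{-k}$ in the stable direction forces $\|\bar\psi_k\|_{C^\alpha}\lesssim\rho^{-k\alpha}\|\psi\|_{C^\alpha}$, so the total error splits as $\gamma^{-k\alpha}+\rho^{-k\alpha}\tau_0^{n-k}$ (times the norms of $\varphi$ and $\psi$). I would balance these by setting $k=\varepsilon n$ with $\varepsilon<|\log\tau_0|/(|\log\tau_0|+\alpha|\log\rho|)$, so that both contributions decay exponentially in $n$ at the common rate $\tau:=\max(\gamma^{-\varepsilon\alpha},\,\tau_0^{1-\varepsilon}\rho^{-\varepsilon\alpha})<1$, yielding the claimed bound. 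Verifying the measurability of the section $p\mapsto x_p$ and the absolute continuity of the disintegration (needed for $\tilde\varphi\in L^1(\bar\mu)$) should follow from the structural properties of the projection collected in Section~\ref{Definitions}.
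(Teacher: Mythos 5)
Your overall strategy --- pushing the problem down to $G$ through the semiconjugacy $\pi\circ F^{-1}=G\circ\pi$ and a disintegration of $\mu_\phi$ --- is in the same spirit as the paper's, but there is a genuine gap at the step where you invoke the decay of correlations for $G$. After your initial use of $F$-invariance the correlation becomes $\int \varphi\,(\psi\circ F^{-n})\,d\mu_\phi$, and your reduction ends with the quantity $\int \tilde\varphi\,(\bar\psi_k\circ G^{n-k})\,d\mu_{\ast}$, in which the \emph{H\"older} function $\bar\psi_k$ is the one composed with the dynamics while the merely-$L^1$ function $\tilde\varphi=\mathbb{E}_{\mu_\phi}[\varphi\mid\pi]$ sits in the test-function slot. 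Theorem~\ref{decaimento G} controls correlations of the opposite shape: the observable composed with $G^{n}$ only needs to be in $L^1(\mu_{\ast})$, but the \emph{other} observable must be H\"older, because the proof runs the transfer operator on it, via $\int(\varphi\circ G^n)\psi\,d\mu_{\ast}=\int\varphi\,\lambda^{-n}\mathcal{L}^{n}(\psi h)\,d\nu$. These roles are not interchangeable for the non-invertible map $G$, and $\tilde\varphi$ has no H\"older regularity to offer: for $\varphi\in L^1$ it is only $L^1$, and even for H\"older $\varphi$ its regularity would require a regularity statement about the disintegration that is established nowhere. So the bound $\|\tilde\varphi\|_{L^1}\|\bar\psi_k\|_{C^\alpha}\tau_0^{n-k}$ does not follow from anything proved for $G$, and the subsequent balancing of rates, while arithmetically fine, rests on this unjustified estimate.

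The paper's route avoids the problem. It first proves decay for the \emph{backward} correlations $\int(\varphi\circ F^{-n})\psi\,d\mu_\phi$: after disintegrating $\mu_\phi$ over the planes $\{z=z_0\}$ via Rohlin's theorem, the $L^1$ observable is exactly the one carried by $G^{n}$ and the H\"older observable stays put, matching Theorem~\ref{decaimento G}; no intermediate-time approximation is needed because $\pi$ restricted to each plane is injective, so $\psi$ pushes down to a H\"older function on $\mathcal{Q}$ with no loss of regularity. Decay for $F^{n}$ is then deduced by writing $\varphi\circ F^{n}=(\varphi\circ F^{2n})\circ F^{-n}$ and using that, by \eqref{constante K}, the constant depends on the $L^1$ observable only through its $L^1$ norm, which is preserved under composition with $F^{2n}$ by invariance of $\mu_\phi$. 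If you want to keep your time-reversal step, you would need a separate ``reverse'' decay statement for $G$ (H\"older composed with $G^m$ against an $L^1$ test function), which is not available here; the cleaner fix is to adopt the $F^{2n}\circ F^{-n}$ device.
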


We also derive a central limit theorem for the equilibrium state of the horseshoe with respect to a potential $\phi$ as considered above.  

\begin{maintheorem}\label{TCL F}
Let $\varphi $ be a H\"older continuous function and let
 $\sigma \geq 0$ be defined by 
    $$\sigma^{2}= \int \psi^2  \ d\mu_{\phi} + 2\displaystyle\sum_{n=1}^{\infty}  \int \psi (\psi \circ F^n) \ d\mu_{\phi} \quad \mbox{where} \quad \psi = \varphi - \int \varphi \ d\mu_{\phi} .$$
    Then $\sigma$ is finite and $\sigma = 0$ if and only if $\varphi = u \circ F - u $ for some $u \in L^{2}(\mu_\phi)$. On the other hand, if $\sigma >0 $ then given any interval $A\subset \mathbb{R}$,
 $$\mu_{\phi}\left\{  x\in \tilde{R}_0 \cup \tilde{R}_1 : \frac{1}{\sqrt{n}} \displaystyle\sum_{j=0}^{n} \left(  \varphi(F^j(x))   - \int \varphi \ d\mu_{\phi} \right) \in A \right\} \to \frac{1}{\sigma \sqrt{2\pi}} \int_{A} e^{-\frac{t^2}{2\sigma^2}} \ dt,$$ 
 as $n$ goes to infinity.
\end{maintheorem}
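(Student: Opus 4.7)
The strategy is to lift the CLT already established for the projection map $G$ in Section~\ref{decay for G} to the horseshoe $F$ via a H\"older coboundary reduction. Denote by $\pi:\Omega\to\Omega^{\ast}$ the natural projection intertwining the dynamics and by $\mu_G$ the corresponding equilibrium state on $\Omega^{\ast}$, so that integrals against $\mu_\phi$ and $\mu_G$ are related through $\pi$. The first and most delicate step is to show that, for any H\"older continuous observable $\varphi$ on $\Omega$, the zero-mean part $\psi := \varphi - \int\varphi\,d\mu_\phi$ admits a decomposition
\[
\psi \;=\; \tilde\psi\circ\pi \;+\; u\circ F - u,
\]
where $\tilde\psi$ is H\"older continuous on $\Omega^{\ast}$ with zero $\mu_G$-mean and $u\in C^{0}(\Omega)$. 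The candidate for $u$ is the classical Sinai-Bowen telescoping sum: fix a canonical choice of representative $\bar{x}$ inside each fibre $\pi^{-1}(\pi(x))$ and set
\[
u(x) \;=\; \sum_{n\geq 0}\bigl[\psi(F^{n}x)-\psi(F^{n}\bar{x})\bigr];
\]
convergence requires that $F^{n}x$ and $F^{n}\bar{x}$ approach each other exponentially fast. Setting $\tilde\psi := \psi + u - u\circ F$ then produces a function constant on the fibres of $\pi$, which descends to $\Omega^{\ast}$.

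Once the cohomological identity is established, the classical argument closes the proof. Telescoping yields
\[
\frac{1}{\sqrt{n}}\sum_{j=0}^{n-1}\psi\circ F^{j} \;=\; \frac{1}{\sqrt{n}}\sum_{j=0}^{n-1}\tilde\psi\circ G^{j}\circ\pi \;+\; \frac{u\circ F^{n}-u}{\sqrt{n}},
\]
and since $u$ is bounded, the rightmost term vanishes uniformly in $x$. Slutsky's theorem identifies the limiting distribution of the left-hand side with that of the first sum on the right, which by the CLT for $G$ is Gaussian with some variance $\tilde\sigma^{2}$. A direct manipulation of the series defining $\sigma^{2}$ in the statement, using that $u\circ F-u$ is a coboundary and that cross-terms of the form $\int (u\circ F-u)\,(\tilde\psi\circ\pi\circ F^{n})\,d\mu_\phi$ telescope, gives $\sigma^{2}=\tilde\sigma^{2}$; in particular $\sigma<\infty$ and the asserted Gaussian convergence follows. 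The degeneracy case is symmetric: $\sigma=0$ forces $\tilde\sigma=0$, so by the CLT for $G$ there exists $\tilde v\in L^{2}(\mu_G)$ with $\tilde\psi = \tilde v\circ G - \tilde v$; pulling back through $\pi$ and combining with $u$ produces $v\in L^{2}(\mu_\phi)$ with $\varphi - \int\varphi\,d\mu_\phi = v\circ F - v$.

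The hard part is certainly the first step, i.e.\ the H\"older cohomology reduction. In the D\'iaz-Horita-Rios-Sambarino horseshoe the central direction displays \emph{both} contractive and expansive behaviour, so the fibres of $\pi$ are not uniformly stable in the classical hyperbolic sense and a single iterate of $F$ may expand along the central coordinate before eventual contraction upon return to the Markov rectangles $\tilde R_{0},\tilde R_{1}$. Proving that the series defining $u$ converges geometrically therefore requires a careful bookkeeping of return times to a suitable reference set, together with the smallness assumption on the H\"older constant of $e^{\phi}$ from Section~\ref{gap} and the explicit contraction rates in the construction, in order to control the effective contraction along fibres across iterates. This is where the specific geometry of the horseshoe enters the argument in an essential way.
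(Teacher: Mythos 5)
Your route is genuinely different from the paper's. The paper does not perform any cohomological reduction: it first proves exponential decay of correlations for $\mu_{\phi}$ itself (Theorem~\ref{decaimento F}) by disintegrating $\mu_{\phi}$ along the planes $\{z=\mathrm{const}\}$ via Rohlin's theorem and transporting each conditional integral to $(G,\mu_{\ast})$ through $\pi$, and then verifies Gordin's summability condition $\sum_{n}\|\mathbb{E}(\varphi\,|\,F^{-n}\mathcal{B})\|_{2}<\infty$ directly from that decay, exactly as was done for $G$ in Section~\ref{decay for G}. So the paper never needs your function $u$; the martingale criterion is applied to $F$ itself.

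As written, your argument has a concrete error and a misdiagnosis. First, the semiconjugacy in this paper is $\pi\circ F^{-1}=G\circ\pi$, not $\pi\circ F=G\circ\pi$; consequently $\tilde\psi\circ G^{j}\circ\pi=\tilde\psi\circ\pi\circ F^{-j}$, and your telescoping identity
\[
\frac{1}{\sqrt{n}}\sum_{j=0}^{n-1}\psi\circ F^{j}=\frac{1}{\sqrt{n}}\sum_{j=0}^{n-1}\tilde\psi\circ G^{j}\circ\pi+\frac{u\circ F^{n}-u}{\sqrt{n}}
\]
is false: the Birkhoff sums of the factor correspond to \emph{backward} Birkhoff sums of $F$. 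This is repairable (run the whole argument for $F^{-1}$, whose Birkhoff sums have the same distribution under the invariant measure, and convert coboundaries via $w\circ F^{-1}-w=(-w\circ F^{-1})\circ F-(-w\circ F^{-1})$), but the identity you actually use does not hold. Second, the difficulty you single out is not the real one: the fibres of $\pi$ are the vertical $z$-segments, along which $F^{-1}$ contracts \emph{uniformly} at rate $\beta^{-1}<1/6$ or $\beta_{1}^{-1}<1/3$; the mixed contracting/expanding behaviour of the central ($y$) direction is irrelevant to the convergence of your telescoping series, and the smallness condition on the H\"older constant of $e^{\phi}$ plays no role there (it is used only for the spectral gap of $\mathcal{L}_{\phi_{\ast}}$, i.e.\ inside the CLT for $G$ that you invoke). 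Finally, the identification $\sigma^{2}=\tilde\sigma^{2}$ and even the finiteness of $\sigma$ are asserted rather than proved: the Green--Kubo series for $F$ converges because of Theorem~\ref{decaimento F}, which your outline never establishes or invokes, and the usual caveat that the Sinai reduction may only produce a $\tilde\psi$ that is H\"older for a smaller exponent (so that the CLT for $G$ still applies to it) is not addressed.
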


Now we describe a map $G$ that was defined in \cite{RS15} which is related to the projection of $F^{-1}$ on two center-stable planes. By an abuse of notation the map $G$ will be called the \emph{projection map}. Besides the inherent interest in the dynamics of the map $G$,  understanding the statistical behavior of its equilibrium is the crucial ingredient in the proofs of  Theorem~\ref{decaimento F} and Theorem~\ref{TCL F}.

We define as follows the rectangles $R_1$, $R_2$ and $R_3$:
\[ 
\begin{split}
& R_1 = [0,\rho]\times [0,1]\times \{0\}, \\
& R_2 = [3/4 - \rho,3/4]\times [0,\sigma]\times \{0\}, \\
& R_3 = [0,\rho]\times [1+ \varepsilon,2+ \varepsilon]\times \{5/6 \},
\end{split}
\]
with $\varepsilon >0$ close to zero.
\begin{figure}[!htb]
\centering
\includegraphics[scale=0.4]{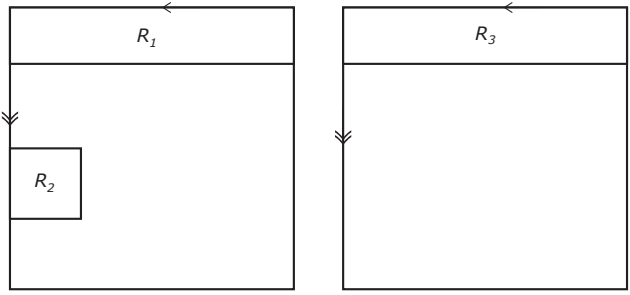}
\caption{The rectangles $R_1$, $R_2$ and $R_3$.}\label{retangulos}
\end{figure}

The rectangles are inside two planes that we will call $P_0$ and $P_1$ (see figure~\ref{retangulos}). We consider an abstract space  $\mathcal{Q} := \bigcup_{i=1}^{3}{R_{i}} $ which is the union of the rectangles. Notice this is a metric space endowed with some natural metric $d$, say the one, induced by $\mathbb{R}^3$.

Let  $g_0, g_1: [0,1] \to \mathbb{R}$ be defined by $g_0(y)=f^{-1}(y) $ and  $g_1(y)= 1- \sigma^{-1}y$. Take $\gamma = \rho^{-1}$.  

Consider the map $G:\mathcal{Q} \to  P_{0} \cup  P_{1}$ defined by its restrictions $G_i$ to each rectangle $R_i$ as follows:
\begin{eqnarray*}
  G_{1}(x,y,z) & = & \big(\gamma x, g_{0}(y),0  \big), \\        
  G_{2}(x,y,z) & = & \big(\gamma (3/4 - x),g_{1}(y) ,5/6  \big), \\ 
  G_{3}(x,y,z) & = & \big(\gamma x,g_{0}(y),0  \big). 
\end{eqnarray*}   

Note that $R_2$ is uniform expanding while we have both, expanding and contracting, behaviors in $R_1$ and $R_3$. 
The map $G$ acts similarly  on $R_1$ and $R_3$. In these rectangles, the points are sent from the right side to the left, except for the extreme points whose $x$ coordinates are fixed. 

Let $\Lambda$ be the maximal invariant set under $G$ of the union of the rectangles $R_1$, $R_2$ and $R_3$: 
\begin{equation*}
\label{maxinv}
\Lambda := \displaystyle\bigcap _{n \in \mathbb{N}}G^{-n} (\mathcal{Q} ),
\end{equation*}
and from now on we denote simply by $G$ the restriction of $G$ to $\Lambda$.
 
Let $\Sigma _{A}$  be the subshift of finite type  
$$\Sigma_{A}= \left\{ \Theta =  \left( \theta _{0} \theta _{1} \theta _{2} \cdots \right) \in \left\{1,2,3 \right\} ^ {\mathbb{N}} | A_{\theta_{i}\,\theta_{i+1}} = 1 \right\}, $$
with transition matrix:
$$ A = \left( 
\begin{array}{ccc}
   1 & 1 & 0 \\
   0 & 0 & 1 \\
   1 & 1 & 0 \\      
\end{array}
\right).  $$

The following transitions are allowed:
\begin{eqnarray*}
   & &   1 \rightarrow 1, 2 \\
   & &   2 \rightarrow 3\\
   & &   3 \rightarrow 1, 2 .
\end{eqnarray*} 

Notice that $G$ is is not conjugated but only semi-conjugated to the shift $\sigma$. That is because the entire segment $[0,1]$ is associated to the constant sequence $(11111 \cdots)$ on $\Sigma_{A}$.

We point out that the 3-rd iterate of any rectangle covers $\mathcal{Q}$.
Moreover, $G$ is topologically mixing.

Note that points belonging to the rectangles 1 and 2 have two pre-images, while points in the rectangle 3 have just one pre-image.

\begin{figure}[htb] \centering 
\begin{minipage}[b]{0.6\linewidth}
\includegraphics[width=\linewidth]{2ageracao.pdf} \caption{Second generation}\label{2geracao}
\end{minipage} 
\hfill
\begin{minipage}[b]{0.6\linewidth} 
\includegraphics[width=\linewidth]{3ageracao.pdf} \caption{Third generation}\label{3geracao} 
\end{minipage}
\end{figure}

Figure~\ref{2geracao} and figure~\ref{3geracao} show the  first steps in the generation of the set $\Lambda$ . Since $\Lambda$ contains infinitely many line segments it is not a Cantor set.

The topological entropy of the subshift $\sigma$ is given by:
 $$h_{top}(\sigma)=\log \left(\frac{1+\sqrt{5}} {2}\right)=\log \omega.$$ 

Since $G$ and $\sigma$ are  semiconjugated we obtain $h_{top}(G) \geq \log \omega$.

In \cite{RS15} it was shown the uniqueness of equilibrium states associated to H\"older continuous potentials $\phi_{\ast}: \mathcal{Q} \to \mathbb{R}$  satisfying
$\sup \phi_{\ast} - \inf \phi_{\ast} < \frac{\log \omega}{2}$. In this work we consider potentials $\phi_{\ast}$ as above and assume an additional condition that will be stated in Section~\ref{gap}. For the system $(G, \phi_{\ast})$ we obtain some statistical properties of its equilibrium measure $\mu_{\ast}$. As mentioned before, these results will be used to derive the statistical properties of the equilibrium of the horseshoe announced above.

The following result states the exponential decay of correlations for H\"older continuous observables.

\begin{maintheorem}\label{decaimento G} 
There exists  a constant $0<\tau < 1$ such that for all $\varphi \in L^{1}(\mu_{\ast})$ and $ \psi \in C^{\alpha}(\mathcal{Q})$ there exists $K:=K(\varphi, \psi)>0$ satisfying 
$$\left| \int \left(\varphi \circ G^n\right)\psi \, d\mu_{\ast} - \int \varphi \, d\mu_{\ast}\int \psi \, d\mu_{\ast} \right| \leq \ K \cdot \tau^{n}  \quad \mbox{for every} \ n\geq 1. $$
\end{maintheorem}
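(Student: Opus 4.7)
The plan is to bootstrap the exponential decay directly from the spectral gap for the transfer operator $\mathcal{L}_{\phi_{\ast}}$ announced in Section~\ref{gap}. First I would fix the functional-analytic setup: let $h\in C^{\alpha}(\mathcal{Q})$ be the positive eigenfunction and $\nu$ the conformal (reference) probability with $\mathcal{L}_{\phi_{\ast}}h=\lambda h$ and $\mathcal{L}_{\phi_{\ast}}^{\ast}\nu=\lambda\nu$, so that $\mu_{\ast}=h\,\nu$ is the unique equilibrium state provided by \cite{RS15}. Introduce the normalized operator
\[
\widetilde{\mathcal{L}}\psi=\frac{1}{\lambda h}\,\mathcal{L}_{\phi_{\ast}}(h\psi),
\]
which satisfies $\widetilde{\mathcal{L}}1=1$ and $\widetilde{\mathcal{L}}^{\ast}\mu_{\ast}=\mu_{\ast}$. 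Because $h$ and $1/h$ are Hölder continuous and bounded away from $0$ and $\infty$, the spectral gap for $\mathcal{L}_{\phi_{\ast}}$ on $C^{\alpha}(\mathcal{Q})$ transfers verbatim to $\widetilde{\mathcal{L}}$: there exist constants $C>0$ and $0<\tau<1$ such that
\[
\Bigl\|\widetilde{\mathcal{L}}^{n}\psi-\int\psi\,d\mu_{\ast}\Bigr\|_{\alpha}\leq C\,\tau^{n}\|\psi\|_{\alpha}\qquad\text{for every }\psi\in C^{\alpha}(\mathcal{Q}).
\]

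Next I would reduce the correlation integral to an iterate of $\widetilde{\mathcal{L}}$ by the usual duality. Since $\mu_{\ast}$ is $G$-invariant and $\widetilde{\mathcal{L}}$ is its transfer operator,
\[
\int(\varphi\circ G^{n})\,\psi\,d\mu_{\ast}=\int\varphi\,\widetilde{\mathcal{L}}^{n}\psi\,d\mu_{\ast}
\]
for every $\varphi\in L^{1}(\mu_{\ast})$ and $\psi\in C^{\alpha}(\mathcal{Q})$. Subtracting the product of integrals and using $\int\widetilde{\mathcal{L}}^{n}\psi\,d\mu_{\ast}=\int\psi\,d\mu_{\ast}$, I get
\[
\int(\varphi\circ G^{n})\psi\,d\mu_{\ast}-\int\varphi\,d\mu_{\ast}\int\psi\,d\mu_{\ast}=\int\varphi\Bigl(\widetilde{\mathcal{L}}^{n}\psi-\int\psi\,d\mu_{\ast}\Bigr)\,d\mu_{\ast}.
\]

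Estimating this is now immediate: by Hölder's inequality and the fact that the $C^{\alpha}$-norm dominates the sup-norm,
\[
\left|\int\varphi\Bigl(\widetilde{\mathcal{L}}^{n}\psi-\int\psi\,d\mu_{\ast}\Bigr)d\mu_{\ast}\right|\leq\|\varphi\|_{L^{1}(\mu_{\ast})}\,\Bigl\|\widetilde{\mathcal{L}}^{n}\psi-\int\psi\,d\mu_{\ast}\Bigr\|_{\infty}\leq C\,\|\varphi\|_{L^{1}(\mu_{\ast})}\,\|\psi\|_{\alpha}\,\tau^{n},
\]
which yields the theorem with $K=C\,\|\varphi\|_{L^{1}(\mu_{\ast})}\,\|\psi\|_{\alpha}$ and the same $\tau$ supplied by the spectral gap.

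The main obstacle is not in this chain of estimates, which is standard once the spectral gap is available, but in making rigorous the passage from the abstract cone contraction of Section~\ref{gap} to a genuine spectral gap of $\widetilde{\mathcal{L}}$ on a Banach space containing $C^{\alpha}(\mathcal{Q})$; in particular, one must verify that the invariant cone is large enough to contain (after normalization by $h$) an open neighborhood of the constant function $1$ in the $C^{\alpha}$ topology, so that the projective metric controls the Hölder norm and the quasi-compactness conclusion applies. A secondary technical point is checking that the auxiliary metric used in Section~\ref{cones} for the projection map $G$—which has both expanding and contracting behavior on $R_{1}$ and $R_{3}$—produces Hölder regularity on $\mathcal{Q}$ equivalent to the one induced by the ambient metric $d$, so that the $C^{\alpha}$ spaces appearing in the statement and in the spectral gap theorem coincide.
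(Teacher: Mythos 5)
Your proposal is correct and follows essentially the same mechanism as the paper: reduce the correlation integral by duality to $\lambda^{-n}\mathcal{L}_{\phi_{\ast}}^{n}(\psi h)/h-1$ (your $\widetilde{\mathcal{L}}^{n}\psi-\int\psi\,d\mu_{\ast}$ is exactly this quantity) and then apply exponential convergence of the normalized iterates, bounding the result by $\|\varphi\|_{L^{1}(\mu_{\ast})}$ times a sup-norm that decays like $\tau^{n}$; the resulting constant $K\le C\|\varphi\|_{1}\|\psi\|_{\alpha}$ matches the paper's estimate \eqref{constante K}. The one genuine difference is the input you invoke: you route through the full spectral gap of Theorem~\ref{gap spectral} on all of $C^{\alpha}(\mathcal{Q})$, and correctly flag that passing from the cone contraction to an operator-norm bound on the whole space is the delicate step. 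The paper's proof of this particular theorem sidesteps that issue entirely: it only uses Proposition~\ref{cota norma} (exponential convergence $\|\lambda^{-n}\mathcal{L}^{n}\varphi-h\|\le L\tau^{n}$ for $\varphi$ \emph{in the cone} $\mathcal{C}_{k,\delta}$ with $\int\varphi\,d\nu=1$), and handles an arbitrary H\"older $\psi$ by writing $\xi=\psi h$ as $\xi_{B}^{+}-\xi_{B}^{-}$ with $\xi_{B}^{\pm}=\tfrac12(|\xi|\pm\xi)+B$ and $B=k^{-1}|\psi h|_{\alpha,\delta}$, so that both pieces lie in the cone and linearity finishes the argument. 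So your "main obstacle" is real for Theorem~\ref{gap spectral} itself (where the paper resolves it with the decomposition $C^{\alpha}=E_{0}\oplus E_{1}$ and the trick of adding a constant to land in the cone), but it is not actually needed for the decay of correlations; the positive-part decomposition is the cheaper and more self-contained route.
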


We also obtain a central limit theorem for the equilibrium.

\begin{maintheorem}\label{TCL G}
Let $\varphi $ be a H\"older continuous function and let  $\sigma \geq 0$ be defined by 
    $$\sigma^{2}= \int \psi^2  \ d\mu_{\ast} + 2\displaystyle\sum_{n=1}^{\infty}  \int \psi (\psi \circ G^n) \ d\mu_{\ast} \quad \mbox{where} \quad \psi = \varphi - \int \varphi \ d\mu_{\ast} .$$
    Then $\sigma$ is finite and $\sigma = 0$ if and only if $\varphi = u \circ G - u $ for some $u \in L^{2}(\mu_{\ast})$. On the other hand, if $\sigma >0 $ then given any interval $A\subset \mathbb{R}$,
 $$\mu_{\ast}\left\{ x\in \mathcal{Q} : \frac{1}{\sqrt{n}} \displaystyle\sum_{j=0}^{n -1} \left(  \varphi(G^j(x))   - \int \varphi \ d\mu_{\ast} \right) \in A\right\} \to \frac{1}{\sigma \sqrt{2\pi}} \int_{A} e^{-\frac{t^2}{2\sigma^2}} \ dt,$$ 
 as $n$ goes to infinity.
\end{maintheorem}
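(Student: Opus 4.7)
The plan is to deduce the central limit theorem from the spectral gap of the transfer operator $\mathcal{L}_{\phi_{\ast}}$ established in Section~\ref{gap}, using Gordin's martingale approximation. Let $h_{\ast} \in C^\alpha(\mathcal{Q})$ denote the positive eigenfunction of $\mathcal{L}_{\phi_{\ast}}$ with leading eigenvalue $\lambda$, so that $\mu_{\ast} = h_{\ast}\,\nu_{\ast}$ with $\nu_{\ast}$ the dual eigenmeasure, and set
\[
\hat{\mathcal{L}}(w) \;=\; \lambda^{-1}\,\frac{\mathcal{L}_{\phi_{\ast}}(w h_{\ast})}{h_{\ast}}.
\]
Then $\hat{\mathcal{L}}$ is the $L^2(\mu_{\ast})$-adjoint of composition with $G$, satisfies $\hat{\mathcal{L}}(1)=1$, and the spectral gap translates into the statement that $\hat{\mathcal{L}}$ restricted to the mean-zero subspace of $C^\alpha(\mathcal{Q})$ has spectral radius some $\tau<1$.

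Writing $\psi = \varphi - \int \varphi\, d\mu_{\ast}$, the adjoint identity yields
\[
\int \psi\,(\psi \circ G^n)\, d\mu_{\ast} \;=\; \int \hat{\mathcal{L}}^n(\psi)\cdot \psi\, d\mu_{\ast},
\]
which is bounded by $C\,\|\psi\|_{C^\alpha}^2\,\tau^n$. Summing this geometric series shows that $\sigma^2$ is finite.

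Next I carry out the martingale decomposition. Define
\[
u \;=\; \sum_{n=1}^{\infty} \hat{\mathcal{L}}^n \psi,
\]
which converges in $C^\alpha(\mathcal{Q})$ by the spectral gap, and put $m = \psi + u - u\circ G$. The identity $\hat{\mathcal{L}}(v\cdot(w\circ G)) = w\cdot \hat{\mathcal{L}}(v)$ together with $\hat{\mathcal{L}}(1)=1$ gives $\hat{\mathcal{L}}(m)=0$; consequently, for the decreasing filtration $\mathcal{B}_k = G^{-k}\mathcal{B}$ one computes $E[m\circ G^j\mid \mathcal{B}_{j+1}] = \hat{\mathcal{L}}(m)\circ G^{j+1} = 0$, so $\{m\circ G^j\}_{j\ge 0}$ is a stationary sequence of reverse martingale differences. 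The Birkhoff sum splits as
\[
\sum_{j=0}^{n-1} \psi \circ G^j \;=\; \sum_{j=0}^{n-1} m\circ G^j \;+\; (u\circ G^n - u),
\]
and the coboundary remainder is bounded in $L^2(\mu_{\ast})$ by $2\|u\|_2$. The martingale CLT applied to the first sum yields the stated normal limit, with variance $\int m^2\, d\mu_{\ast}$; orthogonality of the $m\circ G^j$ in $L^2(\mu_{\ast})$ and the identity $\|S_n\psi\|_2^2 = n\sigma^2 + O(1)$ show that this variance agrees with $\sigma^2$.

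Finally, the degenerate case: if $\sigma=0$ then $\int m^2\, d\mu_{\ast}=0$, so $m=0$ $\mu_{\ast}$-a.e., giving $\psi = u\circ G - u$; conversely a coboundary has telescoping Birkhoff sums uniformly bounded in $L^2(\mu_{\ast})$, forcing $\sigma=0$. The main obstacle is not the probabilistic step, which is routine once a spectral gap is available, but rather verifying that the eigenfunction $h_{\ast}$ is a Hölder continuous function uniformly bounded away from $0$ and $\infty$, so that $\hat{\mathcal{L}}$ genuinely preserves $C^\alpha(\mathcal{Q})$; in the present partially hyperbolic setting this must be extracted from the projective-cone machinery developed in Sections~\ref{cones}--\ref{gap} rather than from any direct uniform-expansion argument.
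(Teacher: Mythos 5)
Your argument is correct, but it follows a genuinely different route from the paper's. The paper treats Gordin's theorem (stated as Theorem~\ref{TCL}) as a black box and only verifies its hypothesis: writing $\left\| \mathbb{E}(\varphi|\mathcal{B}_n)\right\|_2 = \sup\left\{ \int (\psi_n\circ G^n)\varphi \, d\mu_{\ast} : \|\psi_n\|_2=1\right\}$ and invoking the exponential decay of correlations of Theorem~\ref{decaimento G} together with the bound \eqref{constante K} on $K(\varphi,\psi)$, it obtains $\left\|\mathbb{E}(\varphi|\mathcal{B}_n)\right\|_2 \le K(\varphi)\tau^n$, hence summability; the ergodicity required by Gordin is supplied by exactness (Corollary~\ref{exactness G}). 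You instead inline the proof of Gordin's theorem: you construct the martingale--coboundary decomposition explicitly from the spectral gap of the normalized operator $\hat{\mathcal{L}}$. Since $\mathbb{E}(w|\mathcal{B}_n) = (\hat{\mathcal{L}}^n w)\circ G^n$, your $u=\sum_{n\ge 1}\hat{\mathcal{L}}^n\psi$ is exactly the function built inside the proof of Theorem~\ref{TCL} in \cite{Viana}, so the two arguments coincide at bottom; what yours buys is an explicit H\"older-continuous $u$ and the identification $\sigma^2=\int m^2\,d\mu_{\ast}$, while the paper's version is shorter and uses only the decay of correlations (not the full strength of Theorem~\ref{gap spectral}) at this stage. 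Two small points to tighten: the CLT for the stationary reverse-martingale-difference sequence $\{m\circ G^j\}$ requires ergodicity of $\mu_{\ast}$, which you should cite from Corollary~\ref{exactness G}; and your closing concern about the eigenfunction being H\"older and bounded away from zero and infinity is already settled --- this is part of the Ruelle--Perron--Frobenius statement quoted from \cite{RS15} in Subsection~\ref{operador transf}, so $\hat{\mathcal{L}}$ does preserve $C^{\alpha}(\mathcal{Q})$ and inherits the gap on the $\mu_{\ast}$-mean-zero subspace.
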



\subsection{Ruelle-Perron-Frobenius operator and its spectral gap}
\label{operador transf}

Let $(G, \phi_{\ast})$ be the system defined  above. Denote by $C^{0}(\mathcal{Q})$ the set of real continuous functions on $\mathcal{Q}$. We define the operator $\mathcal{L}_{\phi_{\ast}} : C^{0}\left( \mathcal{Q} \right) \rightarrow C^{0}\left(\mathcal{Q} \right)$  called the {\emph{Ruelle-Perron-Frobenius operator}} or simply the {\emph{transfer operator}}, which associates to each $\psi \in C^{0}(\mathcal{Q})$ a continuous function
$ \mathcal{L}_{\phi_{\ast}} (\psi) \colon \mathcal{Q} \to \mathbb{R}$ by:
$$\label{optransf}
\mathcal{L}_{\phi_{\ast}} \psi \left(x\right) = \displaystyle\sum _{y  \in \, G^{-1}\left(x\right)} e^{\phi_{\ast}(y)} \psi \left( y \right). 
$$

The transfer operator $\mathcal{L}_{\phi_{\ast}}$ is a positive bounded linear operator. 
For each $n \! \in\! \mathbb{N}$ we have  
$$\label{iteradostransf}
\mathcal{L}_{\phi_{\ast}}^{n} \psi \left(x\right) = \displaystyle\sum _{y  \in \, G^{-n}\left(x\right)} e^{S_{n}\phi_{\ast} \left(y \right)} \psi \left( y \right),
$$
where $S_{n}\phi_{\ast}$ denotes the Birkhoff sum $S_{n}\phi_{\ast}(x)= \displaystyle\sum_{j=0}^{n-1} \phi_{\ast}\big(G^{j}(x)\big)$.

We also consider the dual operator $ \mathcal{L}_{\phi_{\ast}}^{\ast}: \mathcal{M}(\mathcal{Q}) \to \mathcal{M}(\mathcal{Q})$ that satisfies
$$\int \psi \ d\mathcal{L}_{\phi_{\ast}}^{\ast}\eta  = \int  \mathcal{L}_{\phi_{\ast}}(  \psi ) \ d\eta , $$
for every $\psi \in  C^{0}(\mathcal{Q}) $ and every $\eta \in \mathcal{M}(\mathcal{Q})  $.

We will state here for further reference an important property for the transfer operator and its dual which was obtained in \cite{RS15}.

\begin{Teo} Let $\lambda$ be the spectral radius of the transfer operator $\mathcal{L}_{\phi_{\ast}}$. There exist a probability measure $\nu\in\mathcal{M}(\mathcal{Q})$ and a H\"older continuous function $h:\mathcal{Q}\rightarrow\mathbb{R}$ bounded away from zero and infinity which satisfies
$$\mathcal{L}_{\phi_{\ast}}^{\ast}\nu=\lambda\nu \quad and \quad \mathcal{L}_{\phi_{\ast}}h=\lambda h.$$
\end{Teo}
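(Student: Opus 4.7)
The plan is to produce the eigenmeasure $\nu$ by a fixed-point argument on the normalized dual operator, and then to produce the eigenfunction $h$ as a Ces\`aro limit of normalized iterates of $\mathcal{L}_{\phi_{\ast}}$ applied to the constant function $\mathbf{1}$. This is the classical Ruelle--Perron--Frobenius strategy, but here it has to accommodate the mixed expanding/contracting behavior of $G$ in the central direction on $R_1$ and $R_3$.

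First I would construct $\nu$ and the eigenvalue simultaneously. Consider the map $T:\mathcal{M}(\mathcal{Q})\to\mathcal{M}(\mathcal{Q})$ given by
$$T(\eta)=\frac{\mathcal{L}_{\phi_{\ast}}^{\ast}\eta}{\int\mathcal{L}_{\phi_{\ast}}\mathbf{1}\,d\eta}.$$
Because $\mathcal{Q}$ is compact, $\mathcal{M}(\mathcal{Q})$ is convex and weak*-compact, and $T$ is continuous in the weak* topology (since $\mathcal{L}_{\phi_{\ast}}\mathbf{1}$ is strictly positive and continuous). The Schauder--Tychonoff theorem then yields a fixed point $\nu$, so that $\mathcal{L}_{\phi_{\ast}}^{\ast}\nu=\lambda_0\nu$ with $\lambda_0=\int\mathcal{L}_{\phi_{\ast}}\mathbf{1}\,d\nu>0$. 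To identify $\lambda_0$ with the spectral radius $\lambda=\lim_n\|\mathcal{L}_{\phi_{\ast}}^n\mathbf{1}\|_\infty^{1/n}$, I would integrate $\mathcal{L}_{\phi_{\ast}}^n\mathbf{1}$ against $\nu$ to obtain $\lambda_0^n=\int\mathcal{L}_{\phi_{\ast}}^n\mathbf{1}\,d\nu$ and then combine this with a uniform two-sided bound $C^{-1}\lambda_0^n\leq\mathcal{L}_{\phi_{\ast}}^n\mathbf{1}(x)\leq C\lambda_0^n$ coming from distortion of Birkhoff sums along inverse branches.

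For the eigenfunction I would set $h_n=\frac{1}{n}\sum_{k=0}^{n-1}\lambda^{-k}\mathcal{L}_{\phi_{\ast}}^k\mathbf{1}$. The key analytic input is a uniform bounded distortion estimate: for $x,x'$ in a common element of the Markov partition $\{R_1,R_2,R_3\}$ and for paired preimages $y\in G^{-n}(x)$, $y'\in G^{-n}(x')$ in the same level-$n$ cylinder, $|S_n\phi_{\ast}(y)-S_n\phi_{\ast}(y')|$ should be bounded by a constant times $d(x,x')^{\alpha}$. From this and positivity of $\mathcal{L}_{\phi_{\ast}}^n\mathbf{1}$, one gets that $\{h_n\}$ is uniformly bounded above, uniformly bounded below away from zero, and $\alpha$-H\"older with a uniform constant. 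Arzel\`a--Ascoli provides a subsequential uniform limit $h$, and passing to the limit in
$$\mathcal{L}_{\phi_{\ast}}h_n-\lambda h_n=\frac{\lambda}{n}\bigl(\lambda^{-n}\mathcal{L}_{\phi_{\ast}}^n\mathbf{1}-\mathbf{1}\bigr)$$
yields $\mathcal{L}_{\phi_{\ast}}h=\lambda h$, with H\"older regularity and uniform positivity preserved in the limit.

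The main obstacle is precisely the distortion estimate in the central direction on $R_1$ and $R_3$, where $g_0=f^{-1}$ becomes expanding under backward iteration near one endpoint, so preimages can linger there. This prevents the standard geometric-series telescoping used for uniformly expanding maps. Controlling these contributions is exactly where the small-variation hypothesis $\sup\phi_{\ast}-\inf\phi_{\ast}<\frac{\log\omega}{2}$ enters, as in \cite{RS15}: combined with the entropy lower bound $h_{top}(G)\geq\log\omega$ provided by the semiconjugacy with $\sigma$, it prevents orbits spending long times in the neutral region from dominating $\mathcal{L}_{\phi_{\ast}}^n$, and in turn yields the uniform positivity of $h$ needed to close the argument.
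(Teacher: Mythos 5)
First, a point of comparison: the paper does not actually prove this statement --- it is imported from \cite{RS15} ``for further reference'' --- so the only internal material to measure your proposal against is the cone machinery the paper develops in Section~\ref{gap}, which is the same machinery \cite{RS15} uses to build $h$. Your first half is fine: producing $\nu$ by Schauder--Tychonoff applied to $\eta\mapsto \mathcal{L}_{\phi_{\ast}}^{\ast}\eta\big/\int\mathcal{L}_{\phi_{\ast}}\mathbf{1}\,d\eta$ is standard and works here.

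The genuine gap is in the construction of $h$. Your ``key analytic input'' --- a distortion bound $|S_n\phi_{\ast}(y)-S_n\phi_{\ast}(y')|\leq C\,d(x,x')^{\alpha}$ uniform in $n$ for paired preimages --- fails for this system. On $R_1$ and $R_3$ the inverse branches of $G$ apply $f=g_0^{-1}$ to the central coordinate, and $f'$ equals $e>1$ near $y=0$ (the paper records $\|DG^{-1}|_{R_1}\|=e$ in Lemma~\ref{delta}); this is not a neutral fixed point but genuine expansion of $G^{-1}$ in the centre direction. Paired backward orbits therefore need not approach each other: $d(G^{k}y,G^{k}y')$ can be of order $\min\{e^{n-k}d(x,x'),\ \mathrm{diam}\,\mathcal{Q}\}$, so the telescoped sum $\sum_{k}d(G^{k}y,G^{k}y')^{\alpha}$ grows linearly in $n$ instead of being $O(d(x,x')^{\alpha})$. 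This simultaneously undermines the H\"older equicontinuity of your Ces\`aro averages $h_n$ and the two-sided bound $C^{-1}\lambda_0^{n}\leq\mathcal{L}_{\phi_{\ast}}^{n}\mathbf{1}\leq C\lambda_0^{n}$ that you need to identify $\lambda_0$ with the spectral radius. The correct replacement is the cone-invariance estimate of Proposition~\ref{invariancia}: one shows $\mathcal{L}_{\phi_{\ast}}^{3}(\mathcal{C}_{k,\delta})\subset\mathcal{C}_{\hat\lambda k,\delta}$ by trading the loss $e^{3\alpha}(1+r^{\alpha})$ coming from backward expansion against the gain from having at least $3$ preimages weighted by $e^{3\inf\phi_{\ast}}$ versus $e^{3\sup\phi_{\ast}}$; this is where the small-variation hypothesis and the smallness of $|e^{3\phi_{\ast}}|_{\alpha}$ do quantitative work, and invariance of a finite-diameter cone then delivers the uniform positivity and H\"older control of $\lambda^{-n}\mathcal{L}_{\phi_{\ast}}^{n}\mathbf{1}$ that your argument takes for granted. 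Your closing paragraph names the obstacle, but the entropy-versus-variation heuristic is not a substitute for this estimate.
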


We point out that the unique equilibrium state $\mu_{\ast}$ associated to the system $(G,\phi_{\ast} )$ is given by $\mu_{\ast}= h\nu$.

The Ruelle-Perron-Frobenius operator $\mathcal{L}_{\phi_{\ast}}$ is said to have the \emph{spectral gap property} if its spectrum $\sigma(\mathcal{L}_{\phi_{\ast}})\subset \mathbb{C} $ can be decomposed as follows: $\sigma(\mathcal{L}_{\phi_{\ast}}) =\left\{ \lambda_0 \right\} \cup \Sigma_0$ where $\lambda_0 \in \mathbb{R}$ is an eigenvalue for $\mathcal{L}_{\phi_{\ast}}$ associated to a one-dimensional eigenspace and $ \Sigma_0$ is strictly contained in the ball $\left\{ z\in \mathbb{C}: |z|< \lambda_0 \right\} $.

\begin{maintheorem}\label{gap spectral}
The Ruelle-Perron-Frobenius operator $\mathcal{L}_{\phi_{\ast}}$ has the spectral gap property restrict to the space of H\"older continuous observables.  
\end{maintheorem}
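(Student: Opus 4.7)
The plan is to follow the classical Birkhoff cone strategy, adapted as in Castro--Varandas~\cite{CV}. Since Section~\ref{cones} develops the projective metric theory of invariant cones, the natural path is to produce a cone of positive H\"older functions which is mapped strictly into itself by some iterate of $\mathcal{L}_{\phi_*}$ with finite projective diameter. The standard Birkhoff contraction theorem then gives a uniform contraction in the projective metric, from which the spectral gap on $C^\alpha(\mathcal{Q})$ follows by a standard argument.

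First I would fix parameters $b>0$ and $\alpha'\in(0,\alpha]$ and define an invariant cone of the form
$$
C(b) \;=\; \Bigl\{\,\psi \in C^{\alpha'}(\mathcal{Q}) : \psi>0,\; \psi(x) \leq \psi(y)\,\exp\bigl(b\,d(x,y)^{\alpha'}\bigr) \text{ whenever } x,y \text{ lie in the same rectangle}\Bigr\}.
$$
The first key step is forward invariance: for suitable $b$ one has $\mathcal{L}_{\phi_*}(C(b)) \subset C(\eta b)$ with $\eta<1$. The ratio $\mathcal{L}_{\phi_*}\psi(x)/\mathcal{L}_{\phi_*}\psi(y)$ is controlled by pairing inverse branches of $G$ above nearby points $x,y$ and using (i) the expansion factor $\gamma=\rho^{-1}>3$ of $G$ in the $x$-direction in each rectangle, (ii) the H\"older regularity of $g_0=f^{-1}$ and of $g_1$ on the $y$-coordinate, (iii) the bound on the H\"older constant of $\exp(\phi_*)$ which will be imposed in Section~\ref{gap}, and (iv) the small-variation assumption $\sup\phi_*-\inf\phi_*<\tfrac12\log\omega$, which controls the distortion incurred by the change of inverse branch near the ``kink'' between the domains $R_1\cup R_2$ and $R_3$. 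Because rectangle $R_3$ has only one preimage, care is needed to match branches: this is exactly where the small-variation hypothesis saves the estimate.

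Second, I would prove the finite projective diameter of $\mathcal{L}_{\phi_*}^N(C(b))$ inside $C(b)$, for some large $N$. Topological mixing together with the fact that the third iterate of any rectangle covers $\mathcal{Q}$ ensures that each point of $\mathcal{Q}$ has inverse branches landing in every rectangle, so that $\mathcal{L}_{\phi_*}^N\psi(x)$ combines preimages from all regions in a comparable way. Standard Perron--Frobenius estimates then yield
$$
\sup_{x,y \in \mathcal{Q}} \; \frac{\mathcal{L}_{\phi_*}^N\psi(x)}{\mathcal{L}_{\phi_*}^N\psi(y)} \;\leq\; C_0 < \infty
$$
uniformly in $\psi \in C(b)$, and a similar uniform H\"older bound, giving finite projective diameter in $C(2b)$ (say). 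By Birkhoff's theorem from Section~\ref{cones}, $\mathcal{L}_{\phi_*}^N$ contracts the projective metric on $C(b)$ by a factor $\kappa<1$.

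Finally, I would translate this projective contraction into the spectral gap. Using the eigendata $(\lambda,h,\nu)$ from the theorem just stated, the decomposition
$$
C^\alpha(\mathcal{Q}) \;=\; \mathbb{R}\,h \;\oplus\; E_0, \qquad E_0 \;=\; \Bigl\{\,\psi\in C^\alpha(\mathcal{Q}) : \textstyle\int \psi\,d\nu = 0 \Bigr\},
$$
is $\mathcal{L}_{\phi_*}$-invariant, and the projective contraction (together with the usual trick of writing an arbitrary H\"older observable as a difference of two elements in the cone after adding a large constant multiple of $h$) shows that $\lambda^{-n}\mathcal{L}_{\phi_*}^n\psi$ converges to $h\int \psi\,d\nu$ exponentially fast in the H\"older norm, with rate $\kappa^{n/N}$. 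This is precisely the statement that the spectrum of $\mathcal{L}_{\phi_*}$ on $C^\alpha(\mathcal{Q})$ consists of the simple eigenvalue $\lambda$ plus a compact set in $\{|z|\leq \lambda\kappa^{1/N}\}$, which is the spectral gap. The main obstacle, as anticipated in the introduction, is the cone invariance step: the map $G$ is only piecewise expanding, it has a non-uniform partition of preimages (one vs. two), and the central factor $g_0$ is only weakly expanding near a critical value, so the chosen H\"older exponent $\alpha'$ and the regularity parameter $b$ must be calibrated together with the small H\"older constant of $e^{\phi_*}$ to keep the induced action on the regularity parameter strictly contracting.
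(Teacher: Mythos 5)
Your proposal follows essentially the same route as the paper: an invariant cone of positive H\"older observables (the paper uses $\mathcal{C}_{k,\delta}=\{\varphi>0,\ |\varphi|_{\alpha,\delta}\le k\inf\varphi\}$ and the iterate $\mathcal{L}^3_{\phi_*}$, exploiting that $G^3(R_i)\supset\mathcal{Q}$ and that points at distance $<\delta$ lie in the same rectangle so their preimages can be paired), finite projective diameter of the image, Birkhoff contraction, exponential convergence of $\lambda^{-n}\mathcal{L}^n_{\phi_*}\psi$ to $h\int\psi\,d\nu$, and finally the splitting $C^\alpha(\mathcal{Q})=\mathbb{R}h\oplus\{\int\psi\,d\nu=0\}$. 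Your log-oscillation cone is a standard interchangeable variant of theirs, and you correctly isolate the genuine obstruction — that the inverse branches expand in the central direction, so the contraction of the regularity parameter must be bought with the smallness condition on the H\"older constant of $e^{\phi_*}$ (the paper's condition~\eqref{condi��o potencial}).
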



\section{Invariant cones and projective metrics} \label{cones}
The theory of projective metrics on convex cones and positive operators on a vector space is due to Birkhoff \cite{Birkhoff} and has been extensively studied (see \cite{Baladi} and \cite{Liverani}). Projective metrics associated to cones  provide an elegant way to express spectral properties of the transfer operator.

In this section we will state some results regarding this theory in order to prove the spectral gap of the transfer operator.

Let $E$ be a Banach space. A subset $\mathcal{C}$ of $E  \!-\! \{0\}$ is called a \emph{cone} in $E$ if it is a convex space which satisfies:
\begin{enumerate}
\item $\forall \lambda >  0 : \lambda \mathcal{C} \subset \mathcal{C}; $
\item $ \mathcal{C} \cap  \left( - \mathcal{C}\right) = \{\emptyset \}.$
\end{enumerate}
We say that a cone $\mathcal{C} $ is closed if $\bar{\mathcal{C}}= \mathcal{C} \cup \{0\}$.

Let $\mathcal{C}$ be a closed cone and given $v, w \in \mathcal{C} $ define 
\begin{equation}\label{A e B}
A(v,w)= \sup \left\{t>0 : w-tv \in \mathcal{C}  \right\} \ \mbox{and} \ B(v,w)=\inf \left\{s>0 : sv -w \in \mathcal{C}  \right\}.
\end{equation}

We point out that $A(v,w)$ is finite, $B(v,w)$ is positive and $A(v,w)\leq B(v,w)$ for all $v, w \in \mathcal{C}$. 
We set  

$$ \Theta(v,w) = \log\left( \frac{B(v,w)}{A(v,w)} \right) $$ 
    
with $\Theta$ possibly infinity in the case $A=0$ or $B=+\infty$.    

It is straightforward to check that $\Theta(v,w)$ is well-defined and takes values in $[0,+\infty]$. Since $\Theta(v,w)= 0 \Leftrightarrow v= tw \ \mbox{for some} \ t>0$ we have that $\Theta$ defines a pseudo-metric on $\mathcal{C}$. Then $\Theta$ induces a metric on a projective quotient space of $\mathcal{C}$ called  the \emph{projective metric of} $\mathcal{C}$.  

Note that the projective metric depends in a monotone way on the cone: if $\mathcal{C}_1 \subset \mathcal{C}_2$ are two cones in $E$, then we have
     $$\Theta_2(v,w) \leq \Theta_1(v,w) \quad \mbox{for all} \quad v,w \in \mathcal{C}_1   $$
where $\Theta_1$ and $\Theta_2$ are the projective metrics in $\mathcal{C}_1$ and $\mathcal{C}_2$  respectively.  

Moreover, if $L:{E}_1 \to {E}_2 $ is a linear operator and $\mathcal{C}_1, \mathcal{C}_2$ are cones in ${E}_{1}, {E}_{2}$ respectively, satisfying $L(\mathcal{C}_{1}) \subset \mathcal{C}_{2}$ then $$\Theta_2(L(v),L(w)) \leq \Theta_1(v,w) \quad \mbox{for all} \quad v,w \in \mathcal{C}_1. $$

However $L$ is not necessarily a strict contraction, that will be the case for instance if  $L(\mathcal{C}_{1})$ had finite diameter in $\mathcal{C}_{2}$. This will be stated in the following result which is a key tool to prove the spectral gap for the Ruelle-Perron-Frobenius operator. 

\begin{Prop}\label{cont viana}
Let $\mathcal{C}_{1}$ and $\mathcal{C}_{2}$ be closed convex cones in the Banach spaces ${E}_1$ and ${E}_2$ respectively. If $L:E_1 \to E_2 $ is a linear operator satisfying $L(\mathcal{C}_{1}) \subset \mathcal{C}_{2}$ and  $\Delta = {\rm diam}_{\Theta_2}(L(\mathcal{C}_{1})) >0$ then
$$\Theta_2 \left(L(\varphi), L(\psi) \right) \leq (1-e^{-\Delta}) \cdot \Theta_1 \left( \varphi, \psi \right) \quad \mbox{for all} \ \varphi, \psi \in \mathcal{C}_{1}.$$
\end{Prop}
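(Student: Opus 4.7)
The plan is to establish the standard Birkhoff contraction principle via the elementary algebra of the numbers $A$ and $B$ defined in \eqref{A e B}, exploiting that two elements of $L(\mathcal{C}_{1})$ are a priori within $\Theta_{2}$-distance $\Delta$ of each other. The monotonicity of $\Theta$ under linear maps (which was already noted in the excerpt) is only an easy byproduct; the substance of the proposition is showing that finite diameter of the image forces a definite contraction factor, and for this one must compare two elements of the image that are not merely $L\varphi$ and $L\psi$, but a cleverly chosen pair.

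First I would dispose of the trivial cases. If $\Theta_{1}(\varphi,\psi)=\infty$ then the inequality is vacuous (the right hand side is $+\infty$, the left is at most $\Delta<\infty$). If $\Theta_{1}(\varphi,\psi)=0$ then $\varphi=t\psi$ for some $t>0$, so $L\varphi=tL\psi$ and $\Theta_{2}(L\varphi,L\psi)=0$. Otherwise set $a=A_{1}(\varphi,\psi)\in(0,\infty)$ and $b=B_{1}(\varphi,\psi)\in(0,\infty)$ with $a<b$, so that $\Theta_{1}(\varphi,\psi)=\log(b/a)$, and observe that $\psi-a\varphi$ and $b\varphi-\psi$ lie in $\mathcal{C}_{1}$ (they cannot be zero, else $a=b$). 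Applying $L$, the vectors $L\psi-aL\varphi$ and $bL\varphi-L\psi$ belong to $L(\mathcal{C}_{1})\subset\mathcal{C}_{2}$.

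The key step is to use the hypothesis $\mathrm{diam}_{\Theta_{2}}(L(\mathcal{C}_{1}))=\Delta$ on this particular pair. This tells us that, for any $\varepsilon>0$, there exist numbers $t_{0},s_{0}>0$ with $s_{0}/t_{0}\le e^{\Delta+\varepsilon}$ such that
\[
(bL\varphi-L\psi)-t_{0}(L\psi-aL\varphi)\in\overline{\mathcal{C}_{2}},\qquad s_{0}(L\psi-aL\varphi)-(bL\varphi-L\psi)\in\overline{\mathcal{C}_{2}}.
\]
Rearranging each expression as a linear combination of $L\varphi$ and $L\psi$ yields
\[
B_{2}(L\varphi,L\psi)\le\frac{b+t_{0}a}{1+t_{0}},\qquad A_{2}(L\varphi,L\psi)\ge\frac{b+s_{0}a}{1+s_{0}}.
\]
Taking the logarithm of the quotient of the two estimates gives an upper bound for $\Theta_{2}(L\varphi,L\psi)$ in terms of $a,b,t_{0},s_{0}$ only.

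The final step is the routine but delicate optimization. Writing $r=b/a>1$, the bound becomes $\log\bigl[(r+t_{0})(1+s_{0})/((1+t_{0})(r+s_{0}))\bigr]$. A direct monotonicity check shows this is decreasing in $t_{0}$ and increasing in $s_{0}$, so it is maximized (for us, worst case) when $s_{0}/t_{0}$ is as large as allowed, namely $s_{0}=e^{\Delta}t_{0}$; then sending $t_{0}\to\infty$ and using $\log(1+x)\le x$ together with elementary manipulations yields the multiplicative factor $1-e^{-\Delta}$ in front of $\log r = \Theta_{1}(\varphi,\psi)$, as desired. Letting $\varepsilon\to 0$ concludes the proof. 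The main obstacle is this last algebraic step: one must be careful that the worst-case choice of $t_{0}$ and $s_{0}$ really is saturated by the diameter constraint and that the resulting expression telescopes cleanly to $(1-e^{-\Delta})\log r$; getting the constant exactly right (as opposed to some weaker $\eta(\Delta)<1$) is the heart of Birkhoff's inequality.
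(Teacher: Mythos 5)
Your architecture is the standard one (and matches the source the paper cites for this result, [Viana, Prop.\ 2.3], since the paper gives no proof of its own): the reduction to the degenerate cases, the fact that $\psi-a\varphi$ and $b\varphi-\psi$ lie in $\mathcal{C}_1$, the application of the diameter hypothesis to the pair $L(\psi-a\varphi)$, $L(b\varphi-\psi)$, and the resulting bounds $B_2(L\varphi,L\psi)\le\frac{b+t_0a}{1+t_0}$ and $A_2(L\varphi,L\psi)\ge\frac{b+s_0a}{1+s_0}$ are all correct.

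The gap is in your final optimization. Along the constraint ray $s_0=e^{\Delta}t_0$ the quantity $\frac{(r+t_0)(1+s_0)}{(1+t_0)(r+s_0)}$ is \emph{not} monotone in $t_0$: it tends to $1$ both as $t_0\to0$ and as $t_0\to\infty$, so ``sending $t_0\to\infty$'' produces the useless bound $\log 1=0$ rather than the worst case. The supremum is attained at the interior point $t_0=\sqrt{r}\,e^{-\Delta/2}$, $s_0=\sqrt{r}\,e^{\Delta/2}$, where the expression equals $\bigl(\frac{1+\sqrt{re^{\Delta}}}{\sqrt{r}+\sqrt{e^{\Delta}}}\bigr)^{2}$, and showing that $2\log\frac{1+\sqrt{re^{\Delta}}}{\sqrt{r}+\sqrt{e^{\Delta}}}\le(1-e^{-\Delta})\log r$ is precisely the content of Birkhoff's inequality; it does not follow from $\log(1+x)\le x$. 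A clean way to close the gap without locating the maximum at all: write your bound as $\int_{t_0}^{s_0}q(x)\,dx$ with $q(x)=\frac{r-1}{(1+x)(r+x)}$, note that $\int_{0}^{\infty}q=\log r=\Theta_1(\varphi,\psi)$, and use that $q$ is positive and decreasing to get $t_0\int_{t_0}^{s_0}q\le t_0(s_0-t_0)q(t_0)\le(s_0-t_0)\int_{0}^{t_0}q$, hence $s_0\int_{t_0}^{s_0}q\le(s_0-t_0)\int_{0}^{s_0}q\le(s_0-t_0)\log r$, i.e.\ $\int_{t_0}^{s_0}q\le(1-t_0/s_0)\log r\le(1-e^{-\Delta})\Theta_1(\varphi,\psi)$. (Alternatively, the classical computation shows the centered maximum grows in $\log r$ with derivative $\tanh(\Delta/4)\le 1-e^{-\Delta}$.) With that replacement your proof is complete.
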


For the proof of the last proposition see for example [\cite{Viana}, Proposition 2.3].

Next we will define a cone in the space of positive continuous functions. We start by recalling some definitions.

Let $\varphi$ be an $\alpha$-H\"older continuous function and denote by 
$$|\varphi|_{\alpha}=\sup_{x\neq y}\frac{|\varphi(x)-\varphi(y)|}{d(x, y)^\alpha},$$
the H\"older constant of $\varphi$.

Given $\delta>0$ we say that a function $\varphi$ is  $(C, \alpha)$-H\"older continuous in balls of radius $\delta $ if for some constant $C>0$  we have $|\varphi(x)-\varphi(y)|\leq Cd(x, y)^\alpha$ for all $y \in B(x, \delta)$. 

We will denote by $|\varphi|_{\alpha,\delta}$ the smallest H\"older constant of $\varphi$ in balls of radius $\delta>0$.
We consider the space of $\alpha$-H\"older continuous observables endowed with the norm $\| \cdot \| := | \cdot|_0 + |\cdot|_{\alpha}$.

Consider $\mathcal{Q}$ the union of the rectangles $R_1$, $R_2$ and $R_3$ and fix $1/2 \leq \delta \leq 3/4 - 2\rho$. Let $\varphi: \mathcal{Q} \to \mathbb{R}$ be a 
 $(C, \alpha)$-H\"older continuous function in balls of radius $\delta$ .  Then $\varphi$ is $(C(1+r^{\alpha}), \alpha)$-H\"older continuous in balls of radius $(1+r)\delta$ for each $0\leq r \leq 1.$

Indeed, fixing $r\in[0, 1]$ and given $x, y\in \mathcal{Q}$ with $d(x, y)<(1+r)\delta$, there exists $z\in \mathcal{Q}$ such that $d(x, z)=\delta$ and $d(z, y)<rd(x, z)$. Hence,
\begin{eqnarray}\label{eq do r}
\left |\varphi(x)-\varphi(y)\right |&\leq& \left |\varphi(x)-\varphi(z)\right |+\left |\varphi(z)-\varphi(y)\right | \nonumber  \\
&\leq & Cd(x, z)^{\alpha}+Cd(z, y)^{\alpha}\leq C(1+r^{\alpha})d(x, y)^{\alpha}.
\end{eqnarray}

The next result states that every locally H\"older continuous function defined on $\mathcal{Q}$ is H\"older continuous.
\begin{Lema}\label{bolas} Let $\delta>1/2$ and let
 $\varphi: \mathcal{Q} \to \mathbb{R}$ be a $(C, \alpha)$-H\"older continuous function in balls of radius $\delta$. Then there exists $m=m(\delta)>0$ such that $\varphi$ is $(m\cdot C, \alpha)$-H\"older continuous.
\end{Lema}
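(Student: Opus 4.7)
The plan is to iterate the estimate derived in the paragraph preceding the lemma, which shows that any $(C,\alpha)$-H\"older function in balls of radius $\delta$ is also $(C(1+r^{\alpha}),\alpha)$-H\"older in balls of radius $(1+r)\delta$ for every $r\in[0,1]$. Taking $r=1$, one application of this fact doubles the admissible radius and doubles the H\"older constant.

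Let $D=\mathrm{diam}(\mathcal{Q})$; since $\mathcal{Q}=R_{1}\cup R_{2}\cup R_{3}$ is a compact subset of $\mathbb{R}^{3}$, $D$ is finite and depends only on the fixed parameters $\rho,\sigma,\varepsilon$. Choose $k=k(\delta)\in\mathbb{N}$ to be the smallest integer with $2^{k}\delta\geq D$; then $2^{k}\leq 2D/\delta$. Iterating the above estimate $k$ times in succession, we conclude that $\varphi$ is $(2^{k}C,\alpha)$-H\"older continuous in balls of radius $2^{k}\delta$. Since this radius exceeds the diameter of $\mathcal{Q}$, every pair of points in $\mathcal{Q}$ lies inside such a ball, so the inequality $|\varphi(x)-\varphi(y)|\leq 2^{k}C\,d(x,y)^{\alpha}$ actually holds for \emph{all} $x,y\in\mathcal{Q}$. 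Setting $m(\delta):=2^{k}\leq 2D/\delta$ produces the desired constant depending only on $\delta$.

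The main point to be justified is that the intermediate point $z\in\mathcal{Q}$ required to apply the estimate at each iteration indeed exists. For the first step this is already used in the derivation preceding the lemma and relies on the assumption $\delta>1/2$ together with the explicit geometric placement of the rectangles $R_{1},R_{2},R_{3}$. At the $j$-th iteration one must produce $z\in\mathcal{Q}$ at distance $2^{j-1}\delta$ from $x$ with $d(z,y)<d(x,z)$, whenever $x,y\in\mathcal{Q}$ satisfy $d(x,y)<2^{j}\delta$. This is the only geometric check of the proof and is the main obstacle; it is handled by distinguishing the cases according to which of the convex pieces $R_{1},R_{2},R_{3}$ contain $x$ and $y$, and selecting $z$ along a path inside $\mathcal{Q}$ joining them. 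Once this verification is made, the iterative argument above immediately yields the conclusion.
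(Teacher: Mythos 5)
Your dyadic iteration is, at bottom, the same chaining idea as the paper's proof, just organized differently: the paper joins $x$ to $y$ by a finite chain $z_0=x,z_1,\dots,z_{N+1}=y$ with $d(z_i,z_{i+1})\le\min\{\delta,d(x,y)\}$ and telescopes the local H\"older estimate, obtaining $m=N+1$ with $N=N(\delta)$ produced by compactness; you instead double the radius $k\approx\log_2(D/\delta)$ times and obtain the explicit bound $m=2^k\le 2D/\delta$. Inside a single rectangle $R_i$, which is convex, your intermediate point does exist (take $z$ on the segment $[x,y]$ at distance $2^{j-1}\delta$ from $x$; then $d(z,y)=d(x,y)-2^{j-1}\delta<d(x,z)$), so the within-component part of your argument is complete and quantitatively cleaner than the paper's.

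The genuine gap is precisely the step you defer. You propose to produce $z$ ``along a path inside $\mathcal{Q}$ joining'' $x$ and $y$, but $\mathcal{Q}=R_1\cup R_2\cup R_3$ is disconnected: the rectangles are pairwise disjoint, $R_3$ lies in a different plane from $R_1\cup R_2$, and for the parameters fixed in the paper the pairwise distances are bounded below by $3/4-2\rho\ge\delta$ (the paper itself uses exactly this to conclude that $d(x,y)<\delta$ forces $x$ and $y$ into the same rectangle). Hence when $x$ and $y$ lie in different rectangles there is no path in $\mathcal{Q}$ between them and no point $z\in\mathcal{Q}$ at distance $2^{j-1}\delta$ from $x$ sitting between them; the doubling step cannot be carried out across components, and the case analysis you invoke cannot be ``handled'' as claimed. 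To be fair, the paper's own chain with steps $\le\delta$ runs into the same obstruction across components, so the crux is common to both arguments; but since the cross-rectangle case is the only nontrivial content of the lemma and is exactly the part you leave unverified (with a fix that does not work as stated), the proposal does not yet constitute a proof.
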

\begin{proof} By the compactness of $\mathcal{Q}$, there exists $N\in\mathbb{N}$ which depends only on $\delta$ such that given $x, y\in \mathcal{Q}$ there are $z_{0}=x, z_{1},..., z_{N+1}=y$ with $d(z_{i}, z_{i+1})\leq\delta$ for all $i=0,\cdots, N$ and $d(z_{i}, z_{i+1})\leq d(x, y).$

Since $\varphi$ is $(C, \alpha)$-H\"older continuous in balls of radius $\delta$ it follows that
$$\left |\varphi(x)-\varphi(y)\right |\leq \sum_{i=0}^{N} \left | \varphi(z_{i})-\varphi(z_{i+1}) \right |\leq \sum_{i=0}^{N} C d(z_{i}, z_{i+1})^{\alpha}\leq C(N\!+\!1)d(x, y)^{\alpha}.$$

Thus $\varphi$ is $(m\cdot C, \alpha)$-H\"older continuous where $m=N\!+\!1$.
\end{proof}

Now we consider the cone of locally H\"older continuous observables defined on $\mathcal{Q}$: 
\begin{equation} \label{cone holder}
\mathcal{C}_{k,\delta}= \left\{ \varphi : \varphi>0 \ \mbox{and} \  \frac{|\varphi|_{\alpha,\delta}}{\inf \varphi} \leq k \right\}.
\end{equation}

It follows by definition that $\mathcal{C}_{k_1,\delta} \subset \mathcal{C}_{k_2,\delta}$ if $k_1 \leq k_2$.

Given an arbitrary $\varphi \in \mathcal{C}_{k,\delta} $ we have $\left|\varphi \right|_{\alpha,\delta} \leq  k \cdot \inf \varphi$. Moreover, by Lemma~\ref{bolas}, $\varphi$ is H\"older continuous with constant $m \cdot \left|\varphi \right|_{\alpha,\delta} $. Then 
\begin{equation}  \label{sup no cone}
\sup{\varphi} \leq \inf{\varphi}  + m \left|\varphi \right|_{\alpha,\delta} \cdot \left[\mbox{diam} (\mathcal{Q} )\right]^{\alpha} \leq \left[ 1+ m  \cdot k \cdot  \left[\mbox{diam} (\mathcal{Q} )\right]^{\alpha} \right] \inf{\varphi}.
\end{equation}

In the next lemma we give another expression for the projective metric on the cone  $\mathcal{C}_{k,\delta}$, that we denote by $\Theta_k$ and use in further estimates.

\begin{Lema}\label{metrica cone}
The metric $\Theta_k$ in the cone $\mathcal{C}_{k,\delta}$ is given by $\Theta_k(\varphi , \psi)= \log \left( \frac{B_k(\varphi, \psi)}{ A_k(\varphi, \psi) }\right)  $ where  
$$A_k(\varphi, \psi):=\displaystyle\inf_{d(x,y)< \delta, z\in \mathcal{Q}} \frac{k|x-y|^{\alpha}\psi(z) - (\psi(x)-\psi(y))}{k|x-y|^{\alpha}\varphi(z) - (\varphi(x) - \varphi(y))} $$
 and $$B_k(\varphi, \psi) := \displaystyle\sup_{d(x,y)< \delta, z\in \mathcal{Q}} \frac{k|x-y|^{\alpha}\psi(z) - (\psi(x)-\psi(y))}{k|x-y|^{\alpha}\varphi(z) - (\varphi(x) - \varphi(y))}.$$
\end{Lema}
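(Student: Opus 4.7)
The plan is to directly translate the definitions in (\ref{A e B}) into pointwise inequalities dictated by membership in $\mathcal{C}_{k,\delta}$, and then read off the supremum and infimum. A function $\eta = \psi - t\varphi$ lies in $\mathcal{C}_{k,\delta}$ precisely when (i) $\eta > 0$ on $\mathcal{Q}$ and (ii) $|\eta|_{\alpha,\delta} \leq k \inf \eta$. Writing (ii) pointwise and using the symmetry under the exchange $x \leftrightarrow y$ to absorb the absolute value, (ii) becomes the family of inequalities
$$t \bigl[k|x-y|^{\alpha}\varphi(z) - (\varphi(x) - \varphi(y))\bigr] \leq k|x-y|^{\alpha}\psi(z) - (\psi(x) - \psi(y))$$
indexed by triples $(x, y, z)$ with $d(x,y) < \delta$.

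The key observation is that the bracket on the left, call it $D(x,y,z)$, is non-negative because $\varphi \in \mathcal{C}_{k,\delta}$ yields $\varphi(x) - \varphi(y) \leq |\varphi|_{\alpha,\delta}|x-y|^{\alpha} \leq k|x-y|^{\alpha}\inf\varphi \leq k|x-y|^{\alpha}\varphi(z)$. When $D>0$ the inequality divides to $t \leq N/D$ with $N(x,y,z) := k|x-y|^{\alpha}\psi(z) - (\psi(x) - \psi(y))$, and the infimum over admissible triples is precisely $A_k(\varphi, \psi)$. To see that this bound also enforces positivity (i), I would exploit the degenerate case in the infimum: letting $x \to y$ forces $\psi(x)-\psi(y) \to 0$ and $\varphi(x)-\varphi(y) \to 0$, so both $N$ and $D$ behave like $k|x-y|^{\alpha}\psi(z)$ and $k|x-y|^{\alpha}\varphi(z)$ respectively, and the ratio $N/D$ tends to $\psi(z)/\varphi(z)$; infimizing over $z$ then gives $A_k(\varphi, \psi) \leq \inf_w \psi(w)/\varphi(w)$, so the condition $t \leq A_k(\varphi, \psi)$ automatically guarantees $\psi - t\varphi > 0$. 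Together these two steps identify $A(\varphi, \psi)$ with $A_k(\varphi, \psi)$.

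The case of $B(\varphi, \psi)$ is entirely parallel: $s\varphi - \psi \in \mathcal{C}_{k,\delta}$ rearranges to $s \cdot D(x, y, z) \geq N(x, y, z)$, so $s \geq N/D$, whose supremum over admissible triples is $B_k(\varphi, \psi)$, and the positivity $s\varphi - \psi > 0$ is absorbed by the same limiting device. Substituting these identities into $\Theta_k(\varphi, \psi) = \log\bigl(B(\varphi, \psi)/A(\varphi, \psi)\bigr)$ yields the claimed formula. The most delicate point will be the limiting argument absorbing the positivity constraint into the Hölder formula, since for an arbitrary $\alpha$-Hölder function $\psi$ the quotient $(\psi(x) - \psi(y))/|x-y|^{\alpha}$ need not vanish along every sequence $x \to y$; this has to be handled either by selecting suitable sequences along which both quotients decay, or, more cleanly, by interpreting the diagonal $x = y$ as a legitimate boundary case in the infimum with value $\psi(z)/\varphi(z)$, obtained as the common limiting scaling.
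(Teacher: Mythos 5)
Your overall strategy is the same as the paper's: unpack membership of $\psi-t\varphi$ in $\mathcal{C}_{k,\delta}$ into the positivity condition and the family of inequalities $t\,D(x,y,z)\le N(x,y,z)$, identify $A(\varphi,\psi)$ with $\inf N/D$ and $B(\varphi,\psi)$ with $\sup N/D$, and argue that the positivity constraint is subsumed because $A_k(\varphi,\psi)\le\inf_{w}\psi(w)/\varphi(w)$. The gap is in how you establish that last inequality. Your limiting argument does not work: as $x\to y$ both $N$ and $D$ tend to $0$, so $N/D$ is a $0/0$ form, and its limit is \emph{not} $\psi(z)/\varphi(z)$ in general. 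Concretely, if $\psi(x)-\psi(y)\sim c\,|x-y|^{\alpha}$ and $\varphi(x)-\varphi(y)\sim c'\,|x-y|^{\alpha}$ along a sequence (which is the generic behaviour for $\alpha$-H\"older functions, e.g.\ $\psi(x)=|x-y|^{\alpha}$ gives difference quotient identically $1$), then $N/D\to (k\psi(z)-c)/(k\varphi(z)-c')$. You flag this yourself, but neither of your proposed repairs closes it: there need not exist sequences along which both difference quotients vanish, and adjoining the diagonal $x=y$ as a boundary case with assigned value $\psi(z)/\varphi(z)$ changes the definition of $A_k$ (the infimum in the statement is over $0<d(x,y)<\delta$), so you would be proving a different formula.

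The paper closes this step with a purely algebraic, limit-free argument that you should adopt. By compactness and continuity take $x_0$ attaining $\inf_{w}\psi(w)/\varphi(w)$, and evaluate the ratio at the triple $(x,y,z)=(x,x_0,x_0)$ for any $x$ with $0<d(x,x_0)<\delta$. Cross-multiplying, the inequality
$$\frac{k|x-x_0|^{\alpha}\psi(x_0)-(\psi(x)-\psi(x_0))}{k|x-x_0|^{\alpha}\varphi(x_0)-(\varphi(x)-\varphi(x_0))}\le\frac{\psi(x_0)}{\varphi(x_0)}$$
reduces, after the terms $k|x-x_0|^{\alpha}\psi(x_0)\varphi(x_0)$ cancel, to $\psi(x_0)\varphi(x)-\varphi(x_0)\psi(x)\le 0$, which is exactly the minimality of $\psi(x_0)/\varphi(x_0)$. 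Hence $A_k(\varphi,\psi)\le\inf_{w}\psi(w)/\varphi(w)$ and the minimum in your two-term bound is always achieved by the H\"older-ratio infimum; the argument for $B$ is symmetric. With that substitution your proof matches the paper's.
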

\begin{proof}
First recall the definition of the projective metric and consider $A$ and $B$ as in equation~\eqref{A e B}. Let $\varphi,\psi \in \mathcal{C}_{k,\delta} $. Let $A(\varphi, \psi)= A$ be the supremum of positive numbers satisfying  
  $\psi - A\varphi \in \mathcal{C}_{k,\delta}  $.
This is equivalent to saying that $\psi(x) - A\varphi(x) >0 $ for all $x\in \mathcal{Q}$ and $|\psi - A\varphi|_{\alpha,\delta} \leq k \inf(\psi -A\varphi)$. Hence
\begin{equation}\label{eq A}
A(\varphi, \psi) \leq \min \left\{ \inf_ {x \in \mathcal{Q}} \frac{\psi(x)}{\varphi(x)} , \displaystyle\inf_{0<d(x,y)<\delta , z\in \mathcal{Q}} \frac{k|x-y|^{\alpha}\psi(z)- (\psi(x)-\psi(y)}{k|x-y|^{\alpha}\phi(z)- (\phi(x)-\phi(y)}  \right\}.
\end{equation}
Suppose that the minimum can be attained by the first term on the right side of the inequality. In this case, we can take $x_0$ satisfying 
   $$  \inf_ {x \in \mathcal{Q}} \frac{\psi(x)}{\varphi(x)}=   \frac{\psi(x_0)}{\varphi(x_0)} .$$ 
Thus, for every $x\in \mathcal{Q}$ we have 
\begin{eqnarray*}
&&\psi(x_0)[\phi(x)- \phi(x_0)]- \phi(x_0)[\psi(x)- \psi(x_0)] \leq  0 \\
&\Rightarrow & \!\! \phi(x_0)[k(x-x_0)^{\alpha}\psi(x_0)- \! \! (\psi(x)\! -\psi(x_0))] \leq \\
&&\phi(x_0)[k(x-x_0)^{\alpha}\phi(x_0)- (\phi(x) -\phi(x_0))] \\
&\Rightarrow & \!\! \frac{k|x-x_0|^{\alpha}\psi(z)- (\psi(x)-\psi(x_0)}{k|x-x_0|^{\alpha}\phi(z)- (\phi(x)-\phi(x_0)} \leq \frac{\psi(x_0)}{\varphi(x_0)}.
\end{eqnarray*} 
This guarantees that the minimum in equation~\eqref{eq A} is always attained by the right-hand side. To end the proof just notice that a similar computation can be done to get the expression for $B$.
\end{proof}


\section{Spectral gap}\label{gap}
Consider the system $(G,\phi_{\ast})$ where $G: \mathcal{Q} \to P_0 \cup P_1$ is the projection map defined in Section~\ref{Definitions} and $\phi_{\ast}: \mathcal{Q} \to \mathbb{R}$ is a H\"older continuous potential with variation smaller than  $\frac{\log(\omega)}{2}$. We also assume that $\phi_{\ast}$ satisfies 
\begin{equation}\label{condição potencial}
e^{3var\phi_{\ast}}e^{2\alpha} \left(\frac{2}{3} e^{2\alpha} + \sigma^{\alpha} \right) + \frac{10 e^{3\alpha}}{3} m (\mbox{\mbox{diam}}\mathcal{Q})^\alpha \frac{| e^{3\phi_{\ast}}|_{\alpha} }{e^{3\inf \phi_{\ast}}}<1
\end{equation}

Let $\mathcal{L}_{\phi_{\ast}}$ be the transfer operator of $G$ associated to the potential $\phi_{\ast}$. When there is no risk of confusion we will denote $\mathcal{L}_{\phi_{\ast}}$ simply by $\mathcal{L}$. 
Here we prove that the transfer operator $\mathcal{L}$ has the spectral gap property on the space of H\"older continuous observables.

As mentioned in Section~\ref{Definitions}, the map $G$ is not injective on $\mathcal{Q}$ but it is injective when restricted to each of the rectangles $R_1$, $R_2$ and $R_3$. Moreover, $G^{3}(R_i) \supset \mathcal{Q}$ for each $i=1,2,3$. We will explore this property in order to construct a partition of $\mathcal{Q}$ such that the distance between pre-images under $G^{3}$ of points in the same element of the partition can be controlled.  

\begin{Lema}\label{delta}
There exists a finite cover $\mathcal{P}$ of $\mathcal{Q}$ by  injective domains of  $G^{3}$ such that every $x\in \mathcal{Q}$ has at most one pre-image in each element of the cover $\mathcal{P}$. Moreover, for $\frac{1}{2} \leq \delta \leq \frac{3}{4} - 2\rho $  we have that $d(x,y)< \delta$  implies 
                 $$d(x_i,y_i) < (1+r)\delta   \quad \mbox{for some}  \ r<1$$
where $x_i, y_i$ are pre-images of $x, y$ under $G^3$ belonging to the same element of $\mathcal{P}$.				\end{Lema}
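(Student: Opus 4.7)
My plan is to take $\mathcal{P}$ to be the collection of admissible $3$-cylinders of the symbolic coding associated to $\Sigma_A$. Explicitly, for each admissible word $(a_0 a_1 a_2)$ (i.e.\ with $A_{a_i a_{i+1}}=1$) I would define
\[
P_{a_0 a_1 a_2} \;=\; R_{a_0}\cap G^{-1}(R_{a_1})\cap G^{-2}(R_{a_2}),
\]
and set $\mathcal{P}=\{P_{a_0 a_1 a_2}\}$. The transition matrix permits exactly eight such words, so $\mathcal{P}$ is finite, and every point of $\Lambda=\mathcal{Q}$ lies in some element of $\mathcal{P}$ since it admits an itinerary in $\Sigma_A$. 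On each $P_{a_0 a_1 a_2}$ the iterate $G^{3}$ factors as the composition $G_{a_2}\circ G_{a_1}\circ G_{a_0}$ of three restrictions of the branch maps, each injective on its respective rectangle; hence $G^{3}|_{P_{a_0 a_1 a_2}}$ is injective, which gives the ``at most one pre-image'' conclusion.

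For the distance bound, I would first exploit the pairwise separation of the rectangles:
\[
\operatorname{dist}(R_1,R_2)\ge \tfrac{3}{4}-2\rho \quad\text{and}\quad \operatorname{dist}(R_1\cup R_2,\,R_3)\ge \tfrac{5}{6}.
\]
Thus if $d(x,y)<\delta\le\tfrac{3}{4}-2\rho$, the points $x$ and $y$ lie in a common rectangle $R_j$, and the property that $G^{3}$ of any cylinder covers the full destination plane (a consequence of $G^{3}(R_i)\supset\mathcal{Q}$ together with the symbolic structure) ensures that $y$ has a pre-image in every element of $\mathcal{P}$ that contains a pre-image of $x$.

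I would then control $d(x_i,y_i)$ coordinate-by-coordinate. The $x$-direction is easy: each $G_j$ multiplies $x$ by $\gamma=\rho^{-1}$, so $|x_i^{(1)}-y_i^{(1)}|\le \rho^{3}\,|x^{(1)}-y^{(1)}|\ll\delta$. In the $z$-coordinate both $x_i$ and $y_i$ sit in $R_{a_0}$ and therefore agree. The delicate case is the $y$-direction. On the branches $G_1$ and $G_3$ the inverse action is the map $f$ defining the central dynamics of the horseshoe, and $f$ is \emph{not} uniformly contracting—indeed $f'(0)=e>1$ reflects the partial hyperbolicity. The saving grace is that $f$ maps $[0,1]$ into itself, so in particular $|f^{3}(u)-f^{3}(v)|\le 1$; combined with the hypothesis $\delta\ge 1/2$ this already yields $|y_i^{(2)}-x_i^{(2)}|\le 1\le 2\delta$, and a short quantitative estimate of the oscillation of $f^{3}$ over subintervals of length $\delta$ (using that $f^{3}(\delta)<1$ strictly when $\delta<1$) upgrades this to a strict bound with a uniform $r<1$. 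For the remaining branch $G_2$ the inverse $y$-map is the affine contraction $y\mapsto \sigma(1-y)$ with $\sigma<1/3$, which is strictly easier.

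The main obstacle will be precisely this final $y$-estimate: because $f$ expands near its repelling fixed point at $0$ one cannot argue by pure contraction and must instead leverage both the boundedness of the $y$-range and a quantitative bound of the form $f^{3}(\delta)\le c<1$ uniform for $\delta\in[1/2,\tfrac{3}{4}-2\rho]$, in order to extract $r<1$. Once that is in place, the coordinate-wise bounds combine by the triangle inequality to give the desired inequality $d(x_i,y_i)<(1+r)\delta$.
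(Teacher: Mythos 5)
Your proposal is correct and follows essentially the same route as the paper: a finite Markov cover by injective domains of $G^{3}$ (the paper uses the join $\bigvee_{j=0}^{3}G^{-j}(\mathcal{Q})$, with $13$ elements, rather than your $8$ three-cylinders), the separation of the rectangles to place $x,y$ in a common rectangle, and the observation that the only delicate direction is the central one, where the bound comes from $\delta\ge \tfrac12$ together with the fact that $f^{3}$ maps $[0,1]$ into itself. The paper simply makes your final $y$-estimate explicit: since $f(y)=\frac{ey}{(e-1)y+1}$, one has $f^{3}(\delta)=\frac{e^{3}\delta}{(e^{3}-1)\delta+1}<2\delta$ for $\delta\ge\tfrac12$, which yields the stated uniform $r<1$.
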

\begin{proof}
Let $\mathcal{Q}= \bigcup_{i=1}^{3} R_i$ be the union of the rectangles and consider the partition  $\mathcal{P}= \bigvee_{j=0}^{3}G^{-j}(\mathcal{Q})$. Thus $\mathcal{P}$ is a finite cover  of $\mathcal{Q}$ with $13$ elements and satisfies that $G^3$ is injective in each $P_i \in \mathcal{P}$, $i=1, \cdots, 13$. 

Let $\frac{1}{2} \leq \delta \leq \frac{3}{4} - 2\rho $. Given $x,y \in \mathcal{Q}$ with $d(x,y) < \delta$ we have, in particular, that $x, y$ belong to the same rectangle $R_j$, $j=j_{x,y} \in \{1, 2,3\}$. 

Since  $\left\| DG^{-1}|_{R_1}   \right\| = \left\| DG^{-1}|_{R_3}   \right\| = e $  and $\left\| DG^{-1}|_{R_2}   \right\| = \sigma < 1$ there is no loss of generality in assuming that $x, y \in R_1$. Let $G^{-3}(x) = \left\{x_i| i=1, \cdots, 5 \right\}$ and $G^{-3}(y) = \left\{y_i| i=1, \cdots, 5 \right\}$. Let $x_1, y_1$ be the pre-images in the element $P_1 := \bigcap_{j=0}^3 G^{-j}(R_1)$. We have
   $d(x_i, y_i) \leq d(x_1, y_1)  \quad \mbox{for all} \ i=1, \cdots, 5. $
   
Since $G^{3}$ preserves horizontal and vertical lines and $DG^{-1}|_{R_1}$ contracts vertical lines we may assume that $x, y$ are on the same horizontal line.		
Again, using that the derivative in $P_1$ can be estimated by $e$, it is straightforward to check that  $d(x_1, y_1) < \frac{e^3}{\delta (e^3 -1)+1} \delta.$
Since $\delta \geq \frac{1}{2}$ we have $r:= \left(\frac{e^3}{\delta (e^3 -1)+1}-1\right) < 1 $.
\end{proof}

From now on we fix $\delta >0$ as in Lemma~\ref{delta}. Given $k>0$ let $\mathcal{C}_{k,\delta}$ be the cone of locally H\"older continuous functions defined in equation~\ref{cone holder}. The next result states the strict invariance of the cone $\mathcal{C}_{k,\delta}$ under the operator $\mathcal{L}_{\phi_{\ast}}^{3}$ for $k$ large enough.

\begin{Prop}\label{invariancia}
There exists $0<\hat{\lambda}<1$ such that  $$ \mathcal{L}_{\phi_{\ast}}^{3}(\mathcal{C}_{k,\delta}) \subset \mathcal{C}_{\hat{\lambda}k,\delta} \qquad \mbox{for} \ k>0 \ \mbox{sufficiently large}.$$
\end{Prop}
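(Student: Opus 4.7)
Fix $\varphi\in\mathcal{C}_{k,\delta}$. Positivity of $\mathcal{L}^3\varphi$ is immediate, so the task reduces to proving the H\"older control $|\mathcal{L}^3\varphi|_{\alpha,\delta}\le\hat\lambda k\,\inf\mathcal{L}^3\varphi$ with a constant $\hat\lambda<1$, provided $k$ is large enough. The reason for iterating three times is Lemma~\ref{delta}: it yields the finite cover $\mathcal{P}$ on which $G^3$ has well-defined inverse branches, and pre-image pairs $(x_i,y_i)$ of $(x,y)$ with $d(x,y)<\delta$ stay inside balls of radius $(1+r)\delta$ with $r<1$. By~\eqref{eq do r}, $\varphi$ is automatically $(k(1+r^\alpha)\inf\varphi,\alpha)$-H\"older on such balls, which is what lets us compare $\varphi(x_i)$ with $\varphi(y_i)$ without leaving the cone.

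I would then use the standard telescoping
\begin{equation*}
\mathcal{L}^3\varphi(x)-\mathcal{L}^3\varphi(y)=\sum_{i}e^{S_3\phi_\ast(y_i)}\bigl(\varphi(x_i)-\varphi(y_i)\bigr)+\sum_{i}\bigl(e^{S_3\phi_\ast(x_i)}-e^{S_3\phi_\ast(y_i)}\bigr)\varphi(x_i).
\end{equation*}
For the first sum I would combine the local H\"older bound on $\varphi$ with the branchwise distortion of $G^{-3}$: admissible three-step pre-itineraries cross the expanding rectangle $R_2$ at most once (contributing $\sigma^\alpha$) and the mixed rectangles $R_1,R_3$ at most twice (contributing $e^{2\alpha}$ per pair of backward iterates), while the weights $e^{S_3\phi_\ast(y_i)}$ can be exchanged for $\sum_j e^{S_3\phi_\ast(y_j)}$ at the price of a factor $e^{3\,\mathrm{var}\,\phi_\ast}$. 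For the second sum I would bound $|e^{S_3\phi_\ast(x_i)}-e^{S_3\phi_\ast(y_i)}|$ via $|e^{3\phi_\ast}|_\alpha$ together with the same distortion factor, and use~\eqref{sup no cone} to write $\varphi(x_i)\le(1+mk(\mbox{diam}(\mathcal{Q}))^\alpha)\inf\varphi$. A matching lower bound $\inf\mathcal{L}^3\varphi\ge 3e^{3\inf\phi_\ast}\inf\varphi$ comes from the fact that the column sums of $A^3$ are at least $3$, so every point has at least three $G^3$-pre-images; the maximum number of pre-images is $5$ and accounts for the numerical constant $10/3$ below.

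Dividing the H\"older bound by the infimum bound produces an estimate of the form
\begin{equation*}
\frac{|\mathcal{L}^3\varphi|_{\alpha,\delta}}{\inf\mathcal{L}^3\varphi}\le\Lambda\, k + C\bigl(1+mk(\mbox{diam}(\mathcal{Q}))^\alpha\bigr),
\end{equation*}
where $\Lambda=e^{3\,\mathrm{var}\,\phi_\ast}e^{2\alpha}\bigl(\tfrac{2}{3}e^{2\alpha}+\sigma^\alpha\bigr)$ matches the first summand of~\eqref{condi��o potencial} and $C=\tfrac{10 e^{3\alpha}}{3}\,|e^{3\phi_\ast}|_\alpha/e^{3\inf\phi_\ast}$ is the coefficient appearing in its second summand. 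Hypothesis~\eqref{condi��o potencial} says precisely that $\Lambda+Cm(\mbox{diam}(\mathcal{Q}))^\alpha<1$, so the coefficient of $k$ on the right-hand side is strictly smaller than $1$; pick $\hat\lambda$ slightly larger than this coefficient. The remaining additive constant $C$ is independent of $k$ and can be absorbed into $\hat\lambda k$ by taking $k$ sufficiently large, which gives $\mathcal{L}^3\varphi\in\mathcal{C}_{\hat\lambda k,\delta}$.

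The principal difficulty is the branch-by-branch bookkeeping of $G^{-3}$: because $G$ is neither uniformly expanding nor uniformly contracting along the $y$-direction on $R_1\cup R_3$, the contraction factors $d(x_i,y_i)/d(x,y)$ depend on the specific admissible itinerary, and one has to keep track of how many admissible three-step itineraries pass through $R_2$ versus $R_1\cup R_3$. Matching these combinatorial contributions with the explicit constants $\tfrac{2}{3}e^{2\alpha}+\sigma^\alpha$ and $\tfrac{10}{3}e^{3\alpha}$ in~\eqref{condi��o potencial} is the technical core; once this is in place, the remaining steps are routine substitutions.
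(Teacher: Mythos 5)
Your plan is correct and follows essentially the same route as the paper's proof: the same telescoping of $\mathcal{L}^3\varphi(x)-\mathcal{L}^3\varphi(y)$, the same use of Lemma~\ref{delta} and \eqref{eq do r} to keep pre-image pairs within Hölder range, the same lower bound $\inf\mathcal{L}^3\varphi\ge 3e^{3\inf\phi_\ast}\inf\varphi$ from the pre-image count, the same use of \eqref{sup no cone}, and the same absorption of the $k$-independent additive term for $k$ large so that \eqref{condi��o potencial} yields $\hat\lambda<1$. The branchwise bookkeeping you defer is exactly the computation the paper carries out (its fourth displayed inequality, with the factors $2e^{3\alpha}(1+r^\alpha)$ and $3e^{2\alpha}\sigma^\alpha$), so there is no gap in substance.
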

\begin{proof}
Let $k>0$ and take $\varphi\in \mathcal{C}_{k,\delta}$ with constant $C=C(\varphi):= |\varphi|_{\alpha, \delta}$. Since $\mathcal{L}_{\phi_{\ast}}^{3}$ is a positive and bounded operator  we have that $\mathcal{L}_{\phi_{\ast}}^{3}(\varphi)$ is a continuous and positive function. In order to prove the result we should find $0<\hat{\lambda}<1$ such that  
$$\frac{ |\mathcal{L}_{\phi_{\ast}}^{3}(\varphi)|_{\alpha, \delta}}{\inf ( \mathcal{L}_{\phi_{\ast}}^{3}(\varphi) )} \leq \hat{\lambda} \cdot k.$$
 
Notice that given $x\in \mathcal{Q}$ we have $3 \leq \# \left\{G^{-3}(x)\right\} \leq 5$ and thus 
\begin{eqnarray*}
\mathcal{L}_{\phi_{\ast}}^{3}(\varphi)(x)= \sum_{y\in G^{-3}(x)} e^{S_3\phi_{\ast}(y)} \cdot \varphi(y) \geq 3\cdot e^{3\inf \phi_{\ast}} \cdot \inf \varphi. 
\end{eqnarray*}

Considering $x, y \in \mathcal{Q} $ with $d(x,y)< \delta$ we have that $x,y$ belong to the same rectangle and, in particular, they have the same number of pre-images. So we can group the pre-images that are in the same rectangle. Thus
\begin{eqnarray*}
&&\frac{ \left|  \mathcal{L}_{\phi_{\ast}}^{3}(\varphi)\right|_{\alpha, \delta}}{\inf \left( \mathcal{L}_{\phi_{\ast}}^{3}(\varphi) \right)} \leq  \frac{\left| \mathcal{L}_{\phi_{\ast}}^{3}(\varphi)(x) - \mathcal{L}_{\phi_{\ast}}^{3}(\varphi)(y) \right|} {\inf \mathcal{L}_{\phi_{\ast}}^{3}\varphi \cdot d(x,y)^{\alpha} } \\ \\
&\leq &  \frac{  \displaystyle\sum_{i=1}^{5} | e^{S_3\phi_{\ast}(x_i)} \varphi(x_i)  -  e^{S_3\phi_{\ast}(y_i)}  \varphi(y_i)  |} { \inf \mathcal{L}_{\phi_{\ast}}^{3}\varphi \cdot d(x,y)^{\alpha}} \\ \\
&\leq&  \frac{ \displaystyle\sum_{i=1}^{5} \left| e^{S_3\phi_{\ast}(x_i)}\right| \left| \varphi(x_i) - \varphi(y_i)  \right|} { \inf \mathcal{L}_{\phi_{\ast}}^{3}\varphi \cdot d(x,y)^{\alpha}}  + \frac{\displaystyle\sum_{i=1}^{5} \left|\varphi(y_i)\right|  \left| e^{S_3\phi_{\ast}(x_i)} - e^{S_3\phi_{\ast}(y_i)}  \right|}{\inf \mathcal{L}_{\phi_{\ast}}^{3}\varphi \cdot d(x,y)^{\alpha}} \\ \\
&\leq&   \frac{   e^{3\sup\phi_{\ast}} (2 e^{3\alpha} (1+r^{\alpha})   +  3  e^{2\alpha}  \sigma^{\alpha})\cdot C    d(x,y)^{\alpha}  }{3\cdot e^{3\inf \phi_{\ast}} \inf \varphi \cdot  d(x,y)^{\alpha}} 
 +  \frac{ 5 \sup\varphi \cdot | e^{3\phi_{\ast}}|_{\alpha} \cdot e^{3\alpha}  d(x,y)^{\alpha}}{3\cdot e^{3\inf \phi_{\ast}} \cdot \inf \varphi \cdot d(x,y)^{\alpha}} \\ \\
&\leq & e^{3 var\phi_{\ast}} \left[ \frac{2}{3} e^{3\alpha} (1+r^{\alpha}) + e^{2\alpha} \cdot \sigma^{\alpha}  \right]  k  + \frac{5e^{3\alpha}| e^{3\phi_{\ast}}|_{\alpha} }{3e^{3\inf \phi_{\ast}}} \left[ 1+ mk (\mbox{\mbox{diam}}\mathcal{Q})^{\alpha}\right]    \\
&\leq& \left[ e^{3var\phi_{\ast}}e^{2\alpha} \left(\frac{2}{3} e^{2\alpha} +   \sigma^{\alpha} \right) + \frac{10 e^{3\alpha}}{3} m (\mbox{diam}\mathcal{Q})^\alpha \frac{| e^{3\phi_{\ast}}|_{\alpha} }{e^{3\inf \phi_{\ast}}} \right] k.
\end{eqnarray*}

Observe that in the fourth inequality we used equation~\eqref{eq do r} and in the fifth inequality we applied equation~\eqref{sup no cone}. 
By condition~\eqref{condição potencial} we obtain that the last inequality is smaller than $\hat{\lambda}k$ for some positive constant $\hat{\lambda} < 1$.
\end{proof}

The invariance of the cone is not enough to guarantee that  the operator $\mathcal{L}^{3}_{\phi_{\ast}}$ is a contraction. In order to prove this we have to verify that the cone $\mathcal{C}_{\hat{\lambda}k, \delta}$ given by the previous proposition has finite diameter.

\begin{Prop}\label{diam finito}
The cone $\mathcal{C}_{\hat{\lambda}k,\delta}$ has finite diameter for $k>0$ sufficiently large.
\end{Prop}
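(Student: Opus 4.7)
The plan is to use the explicit formula for the projective metric $\Theta_k$ given by Lemma~\ref{metrica cone} and show that for any two functions $\varphi,\psi$ in the smaller cone $\mathcal{C}_{\hat\lambda k,\delta}$, the ratio $B_k(\varphi,\psi)/A_k(\varphi,\psi)$ is bounded above by a constant depending only on $k$ (and the data already fixed), not on $\varphi$ or $\psi$. This will immediately give a uniform bound on $\Theta_k(\varphi,\psi) = \log(B_k/A_k)$ and hence on $\mbox{diam}_{\Theta_k}(\mathcal{C}_{\hat\lambda k,\delta})$.

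To bound $B_k(\varphi,\psi)$ from above, I will estimate the numerator
$k|x-y|^\alpha\psi(z) - (\psi(x)-\psi(y))$ from above and the denominator
$k|x-y|^\alpha\varphi(z) - (\varphi(x)-\varphi(y))$ from below, for $d(x,y)<\delta$ and $z\in\mathcal{Q}$. Since $\psi\in\mathcal{C}_{\hat\lambda k,\delta}$, we have $|\psi(x)-\psi(y)|\leq \hat\lambda k\, d(x,y)^\alpha\inf\psi$, so the numerator is at most $k\,d(x,y)^\alpha(\sup\psi+\hat\lambda\inf\psi)$. Applying inequality~\eqref{sup no cone} with $k$ replaced by $\hat\lambda k$ gives $\sup\psi \leq M\inf\psi$ where $M := 1+m\hat\lambda k(\mbox{diam}\mathcal{Q})^\alpha$, so the numerator is controlled by $k\,d(x,y)^\alpha(M+\hat\lambda)\inf\psi$. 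For the denominator, using $|\varphi(x)-\varphi(y)|\leq \hat\lambda k\,d(x,y)^\alpha\inf\varphi$ and $\varphi(z)\geq\inf\varphi$, I obtain the lower bound $(1-\hat\lambda)k\,d(x,y)^\alpha\inf\varphi$. Dividing gives
$$B_k(\varphi,\psi)\leq \frac{(M+\hat\lambda)\inf\psi}{(1-\hat\lambda)\inf\varphi},$$
and a symmetric argument (interchange the roles of numerator/denominator in the infimum) yields
$$A_k(\varphi,\psi)\geq \frac{(1-\hat\lambda)\inf\psi}{(M+\hat\lambda)\inf\varphi}.$$
Taking the quotient, $\Theta_k(\varphi,\psi)\leq 2\log\!\bigl(\tfrac{M+\hat\lambda}{1-\hat\lambda}\bigr)$, which is a finite constant. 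This is the bound on the diameter.

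The only real subtlety, and the reason this proof requires the strict invariance proved in Proposition~\ref{invariancia}, is the lower bound on the denominator: the term $(1-\hat\lambda)$ must be strictly positive in order for the estimate to make sense. If we only had $\varphi\in\mathcal{C}_{k,\delta}$ (with the same $k$ as in the projective metric), the corresponding lower bound would degenerate to zero and $B_k$ would be unbounded, as is expected since the full cone $\mathcal{C}_{k,\delta}$ has infinite projective diameter (any nonzero function on the boundary is at infinite $\Theta_k$-distance from interior functions). Thus the contraction factor $\hat\lambda<1$ furnished by Proposition~\ref{invariancia} is exactly what makes this estimate possible. Beyond this point, the computation is a routine manipulation of the formula in Lemma~\ref{metrica cone}, so I do not anticipate further difficulties; combining with Proposition~\ref{cont viana} then provides the strict contraction of $\Theta_k$ by $\mathcal{L}^3_{\phi_\ast}$ needed for the spectral gap in the next section.
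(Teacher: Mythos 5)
Your proposal is correct and follows essentially the same route as the paper: bound the numerator of $B_k$ above by $k\,d(x,y)^\alpha(\sup\psi+\hat\lambda\inf\psi)$ and the denominator below by $(1-\hat\lambda)k\,d(x,y)^\alpha\inf\varphi$ (and symmetrically for $A_k$), then eliminate the suprema via the estimate \eqref{sup no cone} applied with constant $\hat\lambda k$, arriving at the same uniform bound $2\log\bigl(\tfrac{M+\hat\lambda}{1-\hat\lambda}\bigr)$ that the paper writes as $2\log\bigl(\tfrac{1+\hat\lambda}{1-\hat\lambda}\bigr)+2\log\bigl(1+m\hat\lambda k(\mbox{diam}\,\mathcal{Q})^\alpha\bigr)$. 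Your remark that the strict gap $\hat\lambda<1$ from Proposition~\ref{invariancia} is what keeps the denominator bounded away from zero is exactly the point of the argument.
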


\begin{proof}

Given an arbitrary $\varphi \in \mathcal{C}_{\hat{\lambda}k,\delta} $ we have $\left|\varphi \right|_{\alpha,\delta} \leq \hat{\lambda}  \cdot k \cdot \inf \varphi$. By equation~\eqref{sup no cone}: 
\begin{equation}\label{sup no cone2}  
\sup{\varphi} \leq \inf{\varphi}  + m \left|\varphi \right|_{\alpha,\delta} \cdot \left[\mbox{diam} (\mathcal{Q} )\right]^{\alpha} \leq \left[ 1+ m \cdot \hat{\lambda} \cdot k \cdot  \left[\mbox{diam} (\mathcal{Q} )\right]^{\alpha} \right] \inf{\varphi}.
\end{equation}
Given $\varphi, \psi \in \mathcal{C}_{\hat{\lambda}k,\delta} $ by Lemma~\ref{metrica cone} one can obtain the following estimate
\begin{eqnarray*}
\Theta_k(\varphi, \psi) \leq \log \left( \frac{k \cdot \sup\varphi + \hat{\lambda} \cdot k \cdot \inf\varphi}{k \cdot \inf\varphi - \hat{\lambda} \cdot k \cdot \inf\varphi } \cdot \frac{k \cdot \sup\psi + \hat{\lambda} \cdot k \cdot \inf\psi}{k \cdot \inf\psi - \hat{\lambda} \cdot k \cdot \inf\psi}  \right).
\end{eqnarray*}
Using equation~\eqref{sup no cone2} we have
\begin{eqnarray*}
\Theta_k(\varphi, \psi) &\leq& \log \left( \frac{k(1+ m \cdot \hat{\lambda} \cdot k \left[\mbox{diam} (\mathcal{Q} )\right]^{\alpha})(1+ \hat{\lambda} ) \inf \varphi}{k(1-\hat{\lambda}) \inf\varphi}  \right)  \\
&+& \log \left( \frac{k(1+ m \cdot \hat{\lambda} \cdot k \left[\mbox{diam} (\mathcal{Q} )\right]^{\alpha})(1+ \hat{\lambda} ) \inf \psi}{k(1-\hat{\lambda}) \inf\psi}  \right) \\
   &\leq&  2\log\left( \frac{1+ \hat{\lambda} }{1- \hat{\lambda} }  \right) +  2\log\left( 1+ m\cdot \hat{\lambda} \cdot k  \left[\mbox{diam} (\mathcal{Q} )\right]^{\alpha} \right).
\end{eqnarray*}
Since $\varphi$ and $\psi$ are arbitrary it implies that the diameter of  $\mathcal{C}_{\hat{\lambda} k, \delta}$ is finite.
\end{proof}

Combining Proposition~\ref{invariancia} and Proposition~\ref{diam finito} we are able to apply Proposition~\ref{cont viana} to establish the next result.

\begin{Prop} \label{contracao no cone}
The operator $\mathcal{L}^3$ is a contraction in the cone $\mathcal{C}_{k,\delta}$: for the constant $\Delta = \mbox{diam}(\mathcal{C}_{\hat{\lambda}k,\delta}) >0$ we have
   $$\Theta_{k} \left(\mathcal{L}_{\phi_{\ast}}^{3}(\varphi), \mathcal{L}_{\phi_{\ast}}^{3}(\psi) \right) \leq (1-e^{-\Delta}) \cdot \Theta_k \left( \varphi, \psi \right) \quad \mbox{for all} \ \varphi, \psi \in \mathcal{C}_{k, \delta}.$$	
\end{Prop}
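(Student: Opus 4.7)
The proposition is the final link in a chain of three results, and my plan is simply to assemble them. The abstract contraction principle (Proposition~\ref{cont viana}) needs two ingredients: an inclusion $L(\mathcal{C}_1)\subset \mathcal{C}_2$ and finite $\Theta_2$-diameter of $L(\mathcal{C}_1)$. Proposition~\ref{invariancia} supplies the inclusion with $L = \mathcal{L}^3$, $\mathcal{C}_1 = \mathcal{C}_{k,\delta}$ and $\mathcal{C}_2 = \mathcal{C}_{\hat\lambda k,\delta}$, while Proposition~\ref{diam finito} supplies the finite-diameter bound, since $\mathrm{diam}_{\Theta_{\hat\lambda k}}\bigl(\mathcal{L}^3(\mathcal{C}_{k,\delta})\bigr) \leq \mathrm{diam}_{\Theta_{\hat\lambda k}}(\mathcal{C}_{\hat\lambda k,\delta}) = \Delta < \infty$. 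Plugging these into Proposition~\ref{cont viana} yields, for all $\varphi,\psi\in \mathcal{C}_{k,\delta}$,
\[
\Theta_{\hat\lambda k}\bigl(\mathcal{L}^3\varphi, \mathcal{L}^3\psi\bigr) \leq \bigl(1 - e^{-\Delta}\bigr)\, \Theta_k(\varphi,\psi).
\]

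It remains to convert the left-hand side from $\Theta_{\hat\lambda k}$ back to $\Theta_k$. Here I would invoke the monotonicity property of projective metrics recalled in Section~\ref{cones}: since $\hat\lambda < 1$ we have the cone inclusion $\mathcal{C}_{\hat\lambda k,\delta}\subset \mathcal{C}_{k,\delta}$, and therefore $\Theta_k(v,w) \leq \Theta_{\hat\lambda k}(v,w)$ for every pair $v,w$ in the smaller cone. Because $\mathcal{L}^3\varphi$ and $\mathcal{L}^3\psi$ both lie in $\mathcal{C}_{\hat\lambda k,\delta}$ by Proposition~\ref{invariancia}, the monotonicity bound applies to them and chains with the previous inequality to give
\[
\Theta_k\bigl(\mathcal{L}^3\varphi,\mathcal{L}^3\psi\bigr) \leq \Theta_{\hat\lambda k}\bigl(\mathcal{L}^3\varphi,\mathcal{L}^3\psi\bigr) \leq \bigl(1-e^{-\Delta}\bigr)\,\Theta_k(\varphi,\psi),
\]
which is exactly the claim.

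There is no genuine obstacle in this argument, as all the analytic work was already carried out in the preceding two propositions. The only points to be careful about are (i) verifying that $\Delta > 0$, which is automatic because the cone $\mathcal{C}_{\hat\lambda k,\delta}$ contains non-proportional functions, so Proposition~\ref{cont viana} applies nontrivially; and (ii) justifying the monotonicity direction, which follows directly from the definition of $\Theta_k$ through the ratio $B/A$ in~\eqref{A e B}, since enlarging the cone relaxes the constraints defining $A$ and $B$ and thus shrinks the log-ratio. With these small checks the proof is complete.
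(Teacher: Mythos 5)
Your overall plan --- chain Proposition~\ref{invariancia}, Proposition~\ref{diam finito} and Proposition~\ref{cont viana} --- is exactly what the paper does (its ``proof'' is a single sentence to that effect). However, your instantiation of the abstract contraction principle contains a genuine error. You take $\mathcal{C}_2=\mathcal{C}_{\hat\lambda k,\delta}$ and assert that $\mathrm{diam}_{\Theta_{\hat\lambda k}}\bigl(\mathcal{C}_{\hat\lambda k,\delta}\bigr)=\Delta<\infty$. That quantity is in fact infinite: a nondegenerate cone always has infinite diameter in its \emph{own} projective metric. Concretely, take $\varphi\equiv 1$ and any $\psi$ saturating the defining constraint, $|\psi|_{\alpha,\delta}=\hat\lambda k\inf\psi$; then $\psi-t\varphi\in\mathcal{C}_{\hat\lambda k,\delta}$ forces $|\psi|_{\alpha,\delta}\le\hat\lambda k(\inf\psi-t)$, i.e.\ $t\le 0$, so $A_{\hat\lambda k}(\varphi,\psi)=0$ and $\Theta_{\hat\lambda k}(\varphi,\psi)=+\infty$. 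With $\Delta=+\infty$ your contraction factor degenerates to $1-e^{-\Delta}=1$ and the argument proves nothing. The citation of Proposition~\ref{diam finito} does not repair this: although its statement omits the metric, its proof estimates $\Theta_k$ (the metric of the \emph{larger} cone), and by the very monotonicity you invoke, $\Theta_k\le\Theta_{\hat\lambda k}$ on $\mathcal{C}_{\hat\lambda k,\delta}$, so finiteness of the $\Theta_k$-diameter is strictly weaker than the $\Theta_{\hat\lambda k}$-finiteness you need. The finiteness in Proposition~\ref{diam finito} depends crucially on the gap $\hat\lambda<1$ through the denominator $(1-\hat\lambda)k\inf\varphi$, which disappears if one measures the cone against itself.

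The fix is to apply Proposition~\ref{cont viana} with $\mathcal{C}_1=\mathcal{C}_2=\mathcal{C}_{k,\delta}$: Proposition~\ref{invariancia} gives $\mathcal{L}^3(\mathcal{C}_{k,\delta})\subset\mathcal{C}_{\hat\lambda k,\delta}\subset\mathcal{C}_{k,\delta}$, and Proposition~\ref{diam finito} (read, as its proof shows, in the metric $\Theta_k$) gives
$\mathrm{diam}_{\Theta_k}\bigl(\mathcal{L}^3(\mathcal{C}_{k,\delta})\bigr)\le\mathrm{diam}_{\Theta_k}\bigl(\mathcal{C}_{\hat\lambda k,\delta}\bigr)=\Delta<\infty$.
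This yields the desired inequality directly with $\Theta_k$ on both sides, and your final monotonicity step --- which is correctly oriented but only needed because of the wrong choice of target cone --- becomes unnecessary.
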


\vspace{0.3cm}

As in Subsection~\ref{operador transf} we consider the function $h$ and the measure $\nu$ satisfying $\mathcal{L}_{\phi_{\ast}} h= \lambda h$ and $\mathcal{L}_{\phi_{\ast}}^{\ast} \nu = \lambda \nu$. Also recall that $\mu_{\ast}= h\nu$. From the last proposition we will derive  exponential convergence of the transfer operator to the eigenfunction $h$ in the space of H\"older continuous observables.

\begin{Prop} \label{cota norma}
For every $\varphi \in \mathcal{C}_{k, \delta}$ satisfying $\int \varphi \ d\nu = 1$ there exist some positive constant $L$ and $0<\tau <1$ such that $$\left\| \lambda^{-n}\mathcal{L}^n_{\phi_{\ast}}(\varphi) - h \right\| = \left|  \lambda^{-n}\mathcal{L}^n_{\phi_{\ast}}(\varphi) - h \right|_{0}  + \left|   \lambda^{-n}\mathcal{L}^n_{\phi_{\ast}}(\varphi) - h\right| _{\alpha, \delta} \leq L\tau^n   \quad \forall n\geq 1.$$ 
\end{Prop}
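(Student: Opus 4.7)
The plan is to convert the projective-metric contraction of Proposition~\ref{contracao no cone} into the required H\"older-norm estimate. First, since $h$ is H\"older continuous and bounded away from zero, after enlarging $k$ if necessary I may assume $\varphi, h \in \mathcal{C}_{k,\delta}$, and normalize $h$ so that $\int h \, d\nu = 1$. The dual relation $\mathcal{L}^{\ast}\nu = \lambda\nu$ then gives $\int \lambda^{-n}\mathcal{L}^{n}\varphi \, d\nu = 1$ for every $n\ge 0$. By Propositions~\ref{invariancia} and~\ref{diam finito}, the image $\mathcal{L}^{3}(\mathcal{C}_{k,\delta}) \subset \mathcal{C}_{\hat{\lambda}k,\delta}$ has $\Theta_k$-diameter at most $\Delta$, so in particular $\Theta_k(\mathcal{L}^{3}\varphi, \mathcal{L}^{3}h) \le \Delta$. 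Iterating Proposition~\ref{contracao no cone} yields, for every $n\ge 1$,
$$\Theta_k\bigl(\mathcal{L}^{3n}\varphi, \mathcal{L}^{3n}h\bigr) \le \tau_0^{\,n-1}\Delta, \qquad \tau_0 := 1 - e^{-\Delta} \in (0,1).$$
Since $\mathcal{L}^{3n}h = \lambda^{3n}h$ and $\Theta_k$ is invariant under positive scaling, setting $\psi_n := \lambda^{-3n}\mathcal{L}^{3n}\varphi$ this becomes $\Theta_k(\psi_n, h) \le \tau_0^{\,n-1}\Delta$.

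The core step is to translate this projective bound into the norm $\|\cdot\|$. Let $A_n, B_n$ be as in \eqref{A e B} applied to the pair $(h,\psi_n)$, so that $\psi_n - A_n h$ and $B_n h - \psi_n$ lie in $\bar{\mathcal{C}}_{k,\delta}$ with $\log(B_n/A_n) = \Theta_k(\psi_n,h)\le \tau_0^{\,n-1}\Delta$. Integrating $\psi_n - A_n h \ge 0$ and $B_n h - \psi_n \ge 0$ against $\nu$, and using $\int\psi_n\, d\nu = \int h\, d\nu = 1$, gives $A_n \le 1 \le B_n$, so that both $(1-A_n)$ and $(B_n - 1)$ are at most $e^{\tau_0^{\,n-1}\Delta} - 1 \le C_0\, \tau_0^{\,n-1}$. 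The defining property of the cone yields
$$|\psi_n - A_n h|_{\alpha,\delta} \le k\,\inf(\psi_n - A_n h) \le k\,(1 - A_n),$$
and combining with the sup--inf comparison~\eqref{sup no cone} applied to $\psi_n - A_n h \in \mathcal{C}_{k,\delta}$ also controls $|\psi_n - A_n h|_0$ by a multiple of $(1-A_n)$. Adding the trivial estimates $|A_n h - h|_0 = (1-A_n)\sup h$ and $|A_n h - h|_{\alpha,\delta} \le k(1-A_n)\inf h$ gives $\|\psi_n - h\| \le L'\tau_0^{\,n-1}$ for some $L'>0$.

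Finally, for arbitrary $n$ write $n = 3m + r$ with $r \in \{0,1,2\}$. From $h = \lambda^{-r}\mathcal{L}^{r}h$ we get the identity
$$\lambda^{-n}\mathcal{L}^{n}\varphi - h = \lambda^{-r}\mathcal{L}^{r}\bigl(\psi_m - h\bigr),$$
and the boundedness of the operators $\lambda^{-r}\mathcal{L}^{r}$ on the space of H\"older observables yields $\|\lambda^{-n}\mathcal{L}^{n}\varphi - h\| \le L\,\tau^{n}$ with $\tau := \tau_0^{1/3}$ and a suitable $L>0$. The main technical obstacle is in the middle step: while translating projective contraction into a $C^0$ bound is standard, obtaining decay of the H\"older seminorm requires genuinely exploiting the specific shape of $\mathcal{C}_{k,\delta}$---namely that the cone condition $|\cdot|_{\alpha,\delta}/\inf \le k$ lets one transfer smallness of $\inf(\psi_n - A_n h)$ (which is controlled via integration against $\nu$) directly into smallness of $|\psi_n - A_n h|_{\alpha,\delta}$.
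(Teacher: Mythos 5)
Your proposal is correct and follows essentially the same route as the paper: contract the projective metric via Proposition~\ref{contracao no cone}, use $\int \lambda^{-n}\mathcal{L}^n\varphi\, d\nu = \int h\, d\nu = 1$ to sandwich $A_n \le 1 \le B_n$, convert the resulting bound $e^{-\Delta\tau_0^{\,n}} \le A_n \le B_n \le e^{\Delta\tau_0^{\,n}}$ into decay of both $|\cdot|_0$ and $|\cdot|_{\alpha,\delta}$, and finish with $n = 3m+r$ and boundedness of $\lambda^{-1}\mathcal{L}$. The only (cosmetic) difference is in extracting the H\"older seminorm: you use the cone membership of $\psi_n - A_n h$ together with $\inf(\psi_n - A_n h) \le 1 - A_n$, whereas the paper works directly from the explicit formula for $A_k, B_k$ in Lemma~\ref{metrica cone}; both exploit the same feature of $\mathcal{C}_{k,\delta}$, and your bookkeeping of the final rate $\tau = \tau_0^{1/3}$ is in fact more careful than the paper's.
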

\begin{proof}
Let $\varphi \in \mathcal{C}_{k, \delta}$ with $\int \varphi \ d\nu = 1$. Since $\nu$ is the reference measure associated to $\lambda$ and $\mu_\ast=h\nu$ we have for every $j \geq 1$
    $$ \int \lambda^{-j}\mathcal{L}^j_{\phi_{\ast}}(\varphi) \ d\nu = \int \lambda^{-j}\varphi \ d(\mathcal{L}^j_{\phi_{\ast}})^{\ast}\nu = \int \varphi \ d\nu =1=\int h\ d\nu .$$
Thus, for every $j\geq 1$ we derive 
$$\inf\frac{\lambda^{-j}\mathcal{L}^j_{\phi_{\ast}}(\varphi)}{\lambda^{-j}\mathcal{L}^j_{\phi_\ast}(h)}=\inf \frac{\lambda^{-j}\mathcal{L}^j_{\phi_{\ast}}(\varphi)}{h} \leq 1\leq \sup \frac{\lambda^{-j}\mathcal{L}^j_{\phi_{\ast}}(\varphi)}{h}=\sup\frac{\lambda^{-j}\mathcal{L}^j_{\phi_{\ast}}(\varphi)}{\lambda^{-j}\mathcal{L}^j_{\phi_\ast}(h)}.$$		

Let $\tilde{\mathcal{L}}_{\phi_{\ast}}=\lambda^{-3}\mathcal{L}^{3}_{\phi_{\ast}}$ and $\tau= 1-e^{-\Delta}$ where
$\Delta=\mbox{diam}(\mathcal{C}_{\hat{\lambda}k,\delta})$. From Lemma~\ref{metrica cone} and Proposition~\ref{contracao no cone} we have
\begin{eqnarray} \label{cota sup}
\nonumber e^{-\Delta \tau^{j}}\!\!\leq A_{k}(\tilde{\mathcal{L}}^j_{\phi_{\ast}}(\varphi), \tilde{\mathcal{L}}^j_{\phi_{\ast}}(h))\!\!\!&\leq&\!\!\!\inf \frac{\tilde{\mathcal{L}}^j_{\phi_{\ast}}(\varphi)}{h}\\
\nonumber\!\!\!&\leq&\!\!\! 1\\
\!\!\!&\leq&\!\!\! \sup \frac{\tilde{\mathcal{L}}^j_{\phi_{\ast}}(\varphi)}{h} \leq B_{k}(\tilde{\mathcal{L}}^j_{\phi_{\ast}}(\varphi), \tilde{\mathcal{L}}^j_{\phi_{\ast}}(h)) \leq e^{\Delta \tau^{j}}.
\end{eqnarray}
Thus for all $j\geq 1$, we have:
$$\left|  \tilde{\mathcal{L}}^j_{\phi_{\ast}}(\varphi) - h \right|_{0} \leq \left| h \right|_{0} \left| \frac{\tilde{\mathcal{L}}^j_{\phi_{\ast}}(\varphi)}{h} -1 \right|_{0} \leq \left| h \right|_{0} \left( e^{\Delta \tau^j} -1 \right) \leq L_{1}\tau ^j . $$
Moreover, the inequality \eqref{cota sup} also gives us 
$$e^{-\Delta\tau^{j}}\leq A_k(\tilde{\mathcal{L}}^j_{\phi_{\ast}}(\varphi), \tilde{\mathcal{L}}^j_{\phi_{\ast}}(h)) \leq \frac {kd(x,y)^{\alpha} \tilde{\mathcal{L}}^j_{\phi_{\ast}}(\varphi)(z)  - \left(\tilde{\mathcal{L}}^j_{\phi_{\ast}}(\varphi)(x) - \tilde{\mathcal{L}}^j_{\phi_{\ast}}(\varphi)(y)  \right)}{k d(x,y)^{\alpha}h(z) - \left(  h(x) -h(y)\right)},$$\\
and
$$\frac {kd(x,y)^{\alpha} \tilde{\mathcal{L}}^j_{\phi_{\ast}}(\varphi)(z)  - \left(\tilde{\mathcal{L}}^j_{\phi_{\ast}}(\varphi)(x) - \tilde{\mathcal{L}}^j_{\phi_{\ast}}(\varphi)(y)  \right)}{k d(x,y)^{\alpha}h(z) - \left(  h(x) -h(y)\right)}\leq B_k (\tilde{\mathcal{L}}^j_{\phi_{\ast}}(\varphi), \tilde{\mathcal{L}}^j_{\phi_{\ast}} (h)) \leq e^{\Delta\tau^{j}}.$$\\
Therefore for every $j \geq 1$ we obtain
\begin{eqnarray*}
&&\!\!\!\!\left|  \tilde{\mathcal{L}}^j_{\phi_{\ast}}(\varphi) - h\right| _{\alpha} = \sup_{x\neq y } \frac{ \left( \tilde{\mathcal{L}}^j_{\phi_{\ast}}(\varphi)(y) -h(y)\right) - \left( \tilde{\mathcal{L}}^j_{\phi_{\ast}}(\varphi)(x) -h(x)\right)}{d(x,y)^{\alpha}} \\
&\leq &\!\!\!\!\left| \tilde{\mathcal{L}}^j_{\phi_{\ast}}(\varphi) - h\right| _{0} + \\
&+&\!\!\!\! \left|{ \frac{kd(x,y)^{\alpha} \tilde{\mathcal{L}}^j_{\phi_{\ast}}(\varphi)(z) - \left( \tilde{\mathcal{L}}^j_{\phi_{\ast}}(\varphi)(x) - \tilde{\mathcal{L}}^j_{\phi_{\ast}}(\varphi)(y) \right)}{kd(x,y)^{\alpha}h(z) - \left( h(y) - h(x)  \right) }  - 1}\right|\!\!\cdot\!\!\left|{ kh(z) - \frac{h(y) - h(x)}{d(x, y)^{\alpha}}} \right|     \\\\
&\leq&\!\!\!\!  L_{1}\tau^j + ( e^{\Delta \tau^j} - 1) \cdot \left( k\left|h\right|_{0}+\left| h \right|_{\alpha}\right) \leq L_{2}\tau^{j}. 
\end{eqnarray*}

Thus for every $j\geq 1$ we have the inequality
$$\left\| \tilde{\mathcal{L}}^{j}_{\phi_{\ast}}(\varphi) - h \right\| = \left| \tilde{\mathcal{L}}^{j}_{\phi_{\ast}}(\varphi) - h \right|_{0}  + \left|  \tilde{\mathcal{L}}^{j}_{\phi_{\ast}}(\varphi) - h\right| _{\alpha, \delta} \leq L_{3}\tau^j.$$ 

Now, given  $n \geq 1$ write $n= 3j +r$ with $j<n $ and $0 \leq r <3$. Since $\mathcal{L}_{\phi_{\ast}}$ is a bounded operator and $\mathcal{L}_{\phi_{\ast}}h= \lambda h$, we conclude that
\begin{eqnarray*}
\left\| \lambda^{-n}\mathcal{L}_{\phi_{\ast}}^{n}(\varphi)  -h  \right\| &=& \left\| \lambda^{-r}\mathcal{L}_{\phi_{\ast}}^{r}\left( \lambda^{-3j}\mathcal{L}_{\phi_{\ast}}^{3j}  - h \right)   \right\| \\
&\leq&  \left\| \lambda^{-1}\mathcal{L}_{\phi_{\ast}} \right\| ^{r} \cdot \left\| \tilde{\mathcal{L}}_{\phi_{\ast}}^{j}(\varphi) -h   \right\|   \leq L\tau^n .
\end{eqnarray*}
\end{proof}

As a consequence of the exponential convergence we can prove the following  property of the equilibrium state associated to the system $(G, \phi_{\ast})$.

\begin{Cor}
The sequence $\left(G^n_{\ast}\nu\right)_{n \in \mathbb{N}}$ of push forwards of the reference measure converges to the equilibrium state $\mu_{\ast}$.
\end{Cor}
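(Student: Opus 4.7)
The strategy is to reduce the weak-* convergence $G^n_*\nu \to \mu_*$ to the exponential convergence $\lambda^{-n}\mathcal{L}^n_{\phi_*}(1) \to h$ that follows from applying Proposition~\ref{cota norma} to the constant function $\varphi\equiv 1$. Since weak-* convergence of measures on the compact space $\mathcal{Q}$ is characterized by convergence of integrals of continuous test functions, I would fix an arbitrary $\psi \in C^0(\mathcal{Q})$ and track $\int \psi\, d(G^n_*\nu)$.

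First, by the definition of pushforward and the duality $\mathcal{L}^{*n}_{\phi_*}\nu = \lambda^n \nu$, I would write
$$\int \psi \, d(G^n_*\nu) \;=\; \int \psi \circ G^n \, d\nu \;=\; \lambda^{-n} \int \mathcal{L}^n_{\phi_*}(\psi \circ G^n) \, d\nu.$$
Next, the elementary pointwise identity
$$\mathcal{L}^n_{\phi_*}(\psi \circ G^n)(x) \;=\; \sum_{y \in G^{-n}(x)} e^{S_n\phi_*(y)}\, \psi(G^n(y)) \;=\; \psi(x)\, \mathcal{L}^n_{\phi_*}(1)(x),$$
valid because every $y \in G^{-n}(x)$ satisfies $G^n(y)=x$, transforms the expression into
$$\int \psi \, d(G^n_*\nu) \;=\; \int \psi \cdot \lambda^{-n}\mathcal{L}^n_{\phi_*}(1) \, d\nu.$$

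To close the argument, I would note that the constant function $\varphi\equiv 1$ belongs to every cone $\mathcal{C}_{k,\delta}$ (its H\"older seminorm vanishes) and satisfies $\int 1 \, d\nu = 1$ because $\nu$ is a probability measure. Proposition~\ref{cota norma} therefore yields $\|\lambda^{-n}\mathcal{L}^n_{\phi_*}(1) - h\| \leq L\tau^n$, giving in particular uniform convergence $\lambda^{-n}\mathcal{L}^n_{\phi_*}(1) \to h$ on $\mathcal{Q}$. Since $\psi$ is bounded and $\nu$ has finite mass, dominated convergence yields
$$\int \psi \, d(G^n_*\nu) \;\longrightarrow\; \int \psi \cdot h \, d\nu \;=\; \int \psi \, d\mu_*,$$
as desired. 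No real obstacle is anticipated: the corollary is an essentially immediate consequence of Proposition~\ref{cota norma} combined with the two standard identities above, and the only mild care required is to recognize that the constant function $1$ meets the hypotheses of that proposition.
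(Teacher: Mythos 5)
Your proposal is correct and follows essentially the same route as the paper: the same chain of identities $\int\psi\,d(G^n_*\nu)=\lambda^{-n}\int\mathcal{L}^n_{\phi_*}(\psi\circ G^n)\,d\nu=\int\psi\cdot\lambda^{-n}\mathcal{L}^n_{\phi_*}(\mathbf{1})\,d\nu$, followed by the uniform convergence $\lambda^{-n}\mathcal{L}^n_{\phi_*}(\mathbf{1})\to h$ from Proposition~\ref{cota norma}. The only difference is cosmetic: you spell out why $\mathbf{1}$ lies in the cone and satisfies the normalization, which the paper leaves implicit.
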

\begin{proof}
Let $\varphi \in C^{0}(\mathcal{Q})$ be arbitrary. Since $\mathcal{L}^{\ast}\nu= \lambda \nu$ we have
\begin{eqnarray*}
\int \varphi \ d\, G^n_{\ast} \nu &= & \int \varphi \circ G^{n}   \ d\nu = \lambda^{-n} \int \varphi  \circ G^{n}  \ d\left(\mathcal{L}^{\ast} \right) ^{n}\nu \\
&=& \lambda^{-n} \int \mathcal{L}^{n} \left( \varphi  \circ G^{n} \right)   d\nu = \lambda^{-n} \int \varphi  \mathcal{L}^{n} \left( \mathbf{1}  \right)   d\nu.
\end{eqnarray*}
By Proposition~\ref{cota norma} the sequence $ \left\{\lambda^{-n}\mathcal{L}^{n} \left(  \mathbf{1}  \right) \right\}$ converges uniformly to $h$ which implies
$$\displaystyle\lim_{n \to \infty} \int \varphi \ d\, G^n_{\ast} \nu = \int \varphi h  d\nu = \int \varphi  d\mu_{\ast}  $$
and ends the proof.
\end{proof}

To finish this section we prove the spectral gap of the transfer operator.  

\begin{Teo}
The operator $\mathcal{L}_{\phi_{\ast}}$ acting on the space $C^{\alpha}\left(\mathcal{Q}\right)$ admits a decomposition of its spectrum: there exists $0 < r_0 <\lambda$ such that $\Sigma = \left\{ \lambda \right\} \cup \Sigma_0 $ with $\Sigma_0$ contained in a ball $B(0, r_0)$ centered at zero and of radius $r_0$.
\end{Teo}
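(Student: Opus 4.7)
The plan is to upgrade Proposition~\ref{cota norma} from an exponential-convergence statement for cone elements into a genuine operator-norm estimate on the Banach space $C^{\alpha}(\mathcal{Q})$ (equipped with $\|\cdot\|=|\cdot|_{0}+|\cdot|_{\alpha}$), and then extract the spectral decomposition from a standard projection argument.

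First I would introduce the rank-one operator
$$P\varphi \;=\; h\int\varphi\,d\nu,$$
which is a bounded projection on $C^{\alpha}(\mathcal{Q})$ whose range is the one-dimensional eigenspace $\mathbb{R}h$. Using $\mathcal{L}_{\phi_{\ast}}h=\lambda h$ and $\mathcal{L}_{\phi_{\ast}}^{\ast}\nu=\lambda\nu$ one checks immediately that $\mathcal{L}_{\phi_{\ast}}P=P\mathcal{L}_{\phi_{\ast}}=\lambda P$. Setting $N:=\mathcal{L}_{\phi_{\ast}}(I-P)=\mathcal{L}_{\phi_{\ast}}-\lambda P$, the identities $P^{2}=P$ and the commutation give $PN=NP=0$, and a trivial induction yields
$$\mathcal{L}_{\phi_{\ast}}^{n} \;=\; \lambda^{n}P + N^{n} \qquad \text{for every } n\geq 1.$$
The space then splits as a topological direct sum $C^{\alpha}(\mathcal{Q})=\mathrm{Range}(P)\oplus\ker P$ into two $\mathcal{L}_{\phi_{\ast}}$-invariant closed subspaces; on $\mathrm{Range}(P)$ the operator acts as multiplication by $\lambda$, and all that remains is to bound the spectral radius of $\mathcal{L}_{\phi_{\ast}}|_{\ker P}=N|_{\ker P}$.

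Next I would extend Proposition~\ref{cota norma} to the full space. Given $\varphi\in C^{\alpha}(\mathcal{Q})$, pick $c>0$ large enough that $\varphi+c\mathbf{1}\in\mathcal{C}_{k,\delta}$; since $|\varphi+c\mathbf{1}|_{\alpha,\delta}=|\varphi|_{\alpha,\delta}$ and $\inf(\varphi+c\mathbf{1})=c+\inf\varphi$, it suffices to take $c\geq|\varphi|_{\alpha,\delta}/k-\inf\varphi$, so $c$ can be chosen of the order of $\|\varphi\|$. The constant function $c\mathbf{1}$ also lies in $\mathcal{C}_{k,\delta}$ (it has zero local H\"older constant). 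After normalizing $\varphi+c\mathbf{1}$ and $c\mathbf{1}$ against $\nu$ so that they have $\nu$-integral equal to one, applying Proposition~\ref{cota norma} to each, writing $\varphi=(\varphi+c\mathbf{1})-c\mathbf{1}$, and using linearity together with the triangle inequality produces an operator-norm bound
$$\bigl\|\lambda^{-n}\mathcal{L}_{\phi_{\ast}}^{n}\varphi-P\varphi\bigr\| \;\leq\; C\tau^{n}\|\varphi\|,$$
that is, $\|\lambda^{-n}\mathcal{L}_{\phi_{\ast}}^{n}-P\|_{\mathrm{op}}\leq C\tau^{n}$.

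Combined with $\lambda^{-n}\mathcal{L}_{\phi_{\ast}}^{n}-P=\lambda^{-n}N^{n}$ this gives $\|N^{n}\|_{\mathrm{op}}\leq C(\lambda\tau)^{n}$, so by Gelfand's formula the spectral radius of $N$ is at most $\lambda\tau<\lambda$. Since $N|_{\ker P}=\mathcal{L}_{\phi_{\ast}}|_{\ker P}$ and the spectrum of $\mathcal{L}_{\phi_{\ast}}$ decomposes along the invariant splitting as $\Sigma\bigl(\mathcal{L}_{\phi_{\ast}}|_{\mathrm{Range}(P)}\bigr)\cup\Sigma\bigl(\mathcal{L}_{\phi_{\ast}}|_{\ker P}\bigr)=\{\lambda\}\cup\Sigma(N|_{\ker P})$, we conclude $\Sigma=\{\lambda\}\cup\Sigma_{0}$ with $\Sigma_{0}\subset\overline{B(0,r_{0})}$ for any $r_{0}\in(\lambda\tau,\lambda)$. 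The main technical obstacle is precisely the extension step: the cone $\mathcal{C}_{k,\delta}$ only contains positive functions with controlled local H\"older variation, while a generic element of $C^{\alpha}(\mathcal{Q})$ is neither positive nor in the cone. Careful bookkeeping is needed to ensure that the auxiliary constant $c$ and the $L^{1}(\nu)$-norms appearing after normalization are dominated by $\|\varphi\|$ with universal constants, so that the final estimate is a true operator-norm bound and not merely a pointwise one.
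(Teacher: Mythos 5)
Your proposal is correct and follows essentially the same route as the paper: the same splitting $C^{\alpha}(\mathcal{Q})=\mathbb{R}h\oplus\{\varphi:\int\varphi\,d\nu=0\}$, the same trick of adding a constant to push an arbitrary H\"older function into the cone $\mathcal{C}_{k,\delta}$, and the same reliance on Proposition~\ref{cota norma} for the exponential contraction on the complementary subspace. The only difference is presentational: you package the argument through the rank-one projection $P$ and Gelfand's formula, whereas the paper normalizes $\varphi$, uses the shift $\varphi+2$ directly, and leaves the passage from norm decay to the spectral inclusion implicit.
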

\begin{proof}
Consider $\tilde{\mathcal{L}}_{\phi_{\ast}} = {\lambda}^{-1}\mathcal{L}_{\phi_{\ast}}$.
Define $E_0 = \left\{ \psi \in C^{\alpha}\left(\mathcal{Q} \right) : \int \psi \ d\nu = 0 \right\}$ and $E_1$ as the eigenspace associated to the eigenvalue $1$. Notice that ${\rm dim} \, E_1=1$. 

We can decompose $C^{\alpha}\left(\mathcal{Q}\right)$ as a direct sum of $E_0$ and $E_1$. In fact, given $\varphi \in C^{\alpha}\left(\mathcal{Q} \right)$  write 
$$\varphi = \left[ \varphi - \int\varphi \ d \nu \cdot h \right] + \left[ \left( \int \varphi \ d\nu \right) h \right]= \varphi_0 + \varphi_1$$ 
and notice that $\varphi_0:= \left[ \varphi - \int\varphi \ d \nu  \right]$ belongs to $E_0$ and $\varphi_1 = \left[ \left( \int \varphi \ d\nu \right) h \right]$ belongs to $E_1$ since $\int h\ d\nu=1$.
Thus in order to derive the spectral gap for $\tilde{\mathcal{L}}_{\phi_{\ast}}$ in $C^{\alpha}\left(\mathcal{Q}\right)$ it is enough to prove that $\tilde{\mathcal{L}}_{\phi_{\ast}}^n$ is a contraction in $E_0$ for $n$ sufficiently large. 

We take $k>0$ large enough such that the cone $\mathcal{C}_{k,\delta }$ is preserved by $\tilde{\mathcal{L}}_{\phi_{\ast}}$. 
Take $\varphi \in E_0$ with $\left|\varphi \right|_{\alpha, \delta} \leq 1$. So $\varphi$ does not necessarily belong to the cone  but $\left(\varphi +2\right) \in \mathcal{C}_{k, \delta}$ since $$\frac{\left|\varphi + 2 \right|_{\alpha. \delta}}{\inf\left(\varphi +2 \right)}  = \frac{\left|\varphi \right|_{\alpha. \delta}}{\inf\left(\varphi +2 \right)} \leq  \frac{1}{\inf\left(\varphi +2 \right)} \leq k \quad \mbox{for} \  k  \ \mbox{large}  . $$
Thus by Proposition~\ref{cota norma} we have 
\begin{eqnarray*} 
\left\| \tilde{\mathcal{L}}_{\phi_{\ast}}^n(\varphi) \right\| &=& \left\| \tilde{\mathcal{L}}_{\phi_{\ast}}^n(\varphi + 2) - \tilde{\mathcal{L}}_{\phi_{\ast}}^n(2)\right\|\\ \\
&\leq& \left\| \tilde{\mathcal{L}}_{\phi_{\ast}}^n(\varphi + 2) - 2h \right\| + \left\| \tilde{\mathcal{L}}_{\phi_{\ast}}^n(2) - 2h \right\| \\\\
&\leq& \left\| \left( \int \varphi + 2 \ d\nu \right) \mathcal{L}_{\phi_{\ast}}^n\left(\frac{\varphi + 2}{\int \varphi + 2 \ d\nu}\right) -2h \right\| + \left\| \tilde{\mathcal{L}}_{\phi_{\ast}}^n(2) - 2h \right\| \\\\
&\leq& 2 \left\| \tilde{\mathcal{L}}_{\phi_{\ast}}^n\left(\frac{\varphi + 2}{\int \varphi + 2 \ d\nu}\right) -h \right\| + 2\left\| \tilde{\mathcal{L}}_{\phi_{\ast}}^n( \mathbf{1}) - h \right\| \\\\
&\leq& 2 L\tau^n + 2L\tau^n = 4L\tau^n .
\end{eqnarray*}
To complete the proof it is enough to observe that the spectrum $\Sigma$ of  $\mathcal{L}_{\phi_{\ast}}$  is given by $\lambda \tilde{\Sigma}$ where $\tilde{\Sigma}$ is the spectrum of $\tilde{\mathcal{L}}_{\phi_{\ast}}$. 
\end{proof}

\section{Statistical behavior for the equilibrium of the projection map}\label{decay for G}

In this section we will prove Theorem~\ref{decaimento G} and Theorem~\ref{TCL G}. The exponential convergence of the transfer operator to the invariant density in the space of H\"older continuous observables will allow us to establish an exponential decay of correlations for the equilibrium state of $(G, \phi_{\ast})$.

\begin{Teo}\label{decaimento}
The equilibrium state $\mu_{\ast}$ associated to the system $(G, \phi_{\ast})$ has exponential decay of correlations for H\"older continuous observables: there exists $0< \tau < 1$ such that for all $\varphi \in L^1(\mu_{\ast})$ and $\psi \in C^{\alpha}(\mathcal{Q}) $ there exists a positive constant $K(\varphi, \psi) $ satisfying:
       $$\left|\int \left(\varphi \circ G^n\right) \psi \ d\mu_{\ast} - \int \varphi \ d\mu_{\ast} \int \psi\ d\mu_{\ast}  \right| \leq K(\varphi, \psi) \tau^n \quad \mbox{for all} \ n\geq 1.  $$
\end{Teo}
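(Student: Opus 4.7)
The strategy is to express the correlation in terms of the transfer operator and then invoke the exponential convergence from Proposition~\ref{cota norma}. The key identity is the duality $\mathcal{L}^n((\varphi\circ G^n)f)=\varphi\,\mathcal{L}^n(f)$ together with $(\mathcal{L}^{\ast})^n\nu=\lambda^n\nu$, which gives
\[
\int (\varphi\circ G^n)\psi\,d\mu_{\ast}=\int(\varphi\circ G^n)\psi h\,d\nu=\lambda^{-n}\int\varphi\,\mathcal{L}^n(\psi h)\,d\nu.
\]
Since $\int\varphi\,d\mu_{\ast}\int\psi\,d\mu_{\ast}=\int\varphi h\,d\nu\cdot\int\psi h\,d\nu$, the correlation becomes
\[
\int\varphi\Bigl[\lambda^{-n}\mathcal{L}^n(\psi h)-h\int\psi h\,d\nu\Bigr]\,d\nu,
\]
so the problem reduces to controlling the bracket uniformly, in a way compatible with $\varphi\in L^1(\mu_{\ast})$.

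The main obstacle is that Proposition~\ref{cota norma} applies only to non-negative functions lying in the cone $\mathcal{C}_{k,\delta}$, while $\psi$ is an arbitrary H\"older observable. I would fix this by translating: choose $M=M(\psi)>0$ large enough that $(\psi+M)h$ is strictly positive and satisfies
\[
\frac{|(\psi+M)h|_{\alpha,\delta}}{\inf\bigl((\psi+M)h\bigr)}\le k,
\]
using the elementary estimates $|(\psi+M)h|_{\alpha,\delta}\le|\psi|_{\alpha,\delta}|h|_0+|h|_{\alpha,\delta}(|\psi|_0+M)$ and $\inf((\psi+M)h)\ge(M-|\psi|_0)\inf h$, so that the ratio tends to $|h|_{\alpha,\delta}/\inf h$ as $M\to\infty$ (picking $k$ large enough from the start so that $h$ itself lies in $\mathcal{C}_{k,\delta}$). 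Adding the constant $M$ does not change the correlation, since
\[
\int(\varphi\circ G^n)\psi\,d\mu_{\ast}-\int\varphi\,d\mu_{\ast}\int\psi\,d\mu_{\ast}=\int(\varphi\circ G^n)(\psi+M)\,d\mu_{\ast}-\int\varphi\,d\mu_{\ast}\int(\psi+M)\,d\mu_{\ast}.
\]

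Next, I would normalize: let $c:=\int(\psi+M)h\,d\nu>0$ and set $\varphi_\psi:=(\psi+M)h/c\in\mathcal{C}_{k,\delta}$ with $\int\varphi_\psi\,d\nu=1$. Proposition~\ref{cota norma} then yields
\[
\bigl\|\lambda^{-n}\mathcal{L}^n(\varphi_\psi)-h\bigr\|\le L\tau^n,
\]
hence in particular the uniform bound
\[
\bigl|\lambda^{-n}\mathcal{L}^n((\psi+M)h)(x)-c\,h(x)\bigr|\le c\,L\tau^n\quad\text{for every }x\in\mathcal{Q}.
\]

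Finally I would plug this into the integral formula. Using $d\mu_{\ast}=h\,d\nu$ with $\inf h>0$,
\[
\Bigl|\int(\varphi\circ G^n)\psi\,d\mu_{\ast}-\int\varphi\,d\mu_{\ast}\int\psi\,d\mu_{\ast}\Bigr|\le\int|\varphi|\,\bigl|\lambda^{-n}\mathcal{L}^n((\psi+M)h)-c\,h\bigr|\,d\nu\le\frac{c\,L}{\inf h}\,\|\varphi\|_{L^1(\mu_{\ast})}\,\tau^n,
\]
which is the desired bound with $K(\varphi,\psi)=c(\psi)L\|\varphi\|_{L^1(\mu_{\ast})}/\inf h$. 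The rate $\tau<1$ is the one produced by Proposition~\ref{contracao no cone} and is independent of both observables. The only delicate point is the calibration of $M$ (and implicitly of $k$) so that $(\psi+M)h$ sits inside the invariant cone; everything else is a direct translation of Proposition~\ref{cota norma}.
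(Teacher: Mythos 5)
Your proof is correct and follows essentially the same route as the paper: rewrite the correlation via the duality $\int(\varphi\circ G^n)\psi h\,d\nu=\lambda^{-n}\int\varphi\,\mathcal{L}^n_{\phi_{\ast}}(\psi h)\,d\nu$ and then invoke the exponential convergence of Proposition~\ref{cota norma} for elements of the cone. The only genuine difference is the reduction of a general H\"older $\psi$ to the cone case: the paper sets $\xi=\psi h$, $B=k^{-1}|\xi|_{\alpha,\delta}$, and decomposes $\xi=\xi_{B}^{+}-\xi_{B}^{-}$ with $\xi_{B}^{\pm}=\frac{1}{2}\left(|\xi|\pm\xi\right)+B\in\mathcal{C}_{k,\delta}$, applying the cone estimate to each piece by linearity, whereas you add a single large constant $M$ to $\psi$ and use the $G$-invariance of $\mu_{\ast}$ to see that this shift leaves the correlation unchanged. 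Both devices work and yield the same constant structure $K(\varphi,\psi)\leq K(\psi)\|\varphi\|_{1}$; yours trades the two-term splitting for a calibration of $M$, which requires (as you note) that $k$ be fixed with $|h|_{\alpha,\delta}/\inf h<k$ strictly so that $(\psi+M)h$ actually enters $\mathcal{C}_{k,\delta}$ for some finite $M$ --- a harmless requirement since $k$ may be enlarged at will.
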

\begin{proof}
Recall that $h$, the eigenfunction of the transfer operator associated to the spectral radius $\lambda$, is bounded away from zero and infinity. Let us consider first the case $\psi \cdot h \in \mathcal{C}_{k, \delta}$ for $k$ large enough. Without loss of generality, suppose  
$\int \psi \  d\mu_{\ast} = 1$. Thus we have
\begin{eqnarray*}
\left| \int \left(\varphi \circ G^n\right) \psi \ d\mu_{\ast} - \int \varphi \ d\mu_{\ast} \int \psi\ d\mu_{\ast}\right| &\!\!=\!\!& \left| \int\! \varphi \cdot \lambda^{-n}\mathcal{L}_{\phi_{\ast}}^n\left( \psi \cdot h \right) \ d\nu - \int\! \varphi \ d\mu_{\ast}   \right| \\
&\!\!=\!\!& \int\! \varphi \cdot \left[\frac{\lambda^{-n}\mathcal{L}_{\phi_{\ast}}^n\left( \psi \cdot h \right)}{h} - 1 \right] \ d\mu_{\ast} \\
&\!\!\leq\!\!& \int\! \left| \varphi \right| \ d\mu_{\ast} \cdot \left\| \frac{\lambda^{-n}\mathcal{L}_{\phi_{\ast}}^n\left( \psi \cdot h \right)}{h} - 1 \right\|_0.  
\end{eqnarray*}

By Proposition~\ref{cota norma} there exists a positive constant $L$ such that 
$$\left\| \frac{\lambda^{-n}\mathcal{L}_{\phi_{\ast}}^n\left( \psi \cdot h \right)}{h} - 1 \right\|_0 \leq \left\|h \right\|_0 \left\| \lambda^{-n}\mathcal{L}_{\phi_{\ast}}^n\left( \psi \cdot h \right) - h \right\|_0 \leq  \left\|h \right\|_0  L \tau^ n. $$

In the general case fix $B=k^{-1}|\psi \cdot h |_{\alpha, \delta}$ and write 
$\psi\cdot h= \xi$ where
$$\xi=\xi_{B}^{+}-\xi_{B}^{-}\,\,\,\, \mbox{\and}\,\,\,\, \xi_{B}^{\pm}=\frac{1}{2}\left(|\xi|\pm\xi\right)+B.$$

Hence $\xi_{B}^{\pm}\in \mathcal{C}_{k,\delta}$ and we can apply the previous estimates to $\xi_{B}^{\pm}.$ By linearity the proposition holds.
\end{proof}

We point out that since the transfer operator converges to the density in the space of H\"older continuous observables, we can estimate the constant $K=K(\varphi, \psi)$ obtained in the last proposition as follows
\begin{equation} \label{constante K}
K(\varphi, \psi)\leq \tilde{K}\|\varphi\|_{1}\left(\|\psi\|_{1}+|\psi|_{\alpha, \delta}\right) = K(\psi) \|\varphi\|_{1}
\end{equation}
where  the constant $\tilde{K}$ does not depend on  $\varphi$ or on $\psi$ and $K(\psi)$ is a constant that depends only on $\psi$. 

Let $\mathcal{B}$  be the Borel $\sigma$-algebra of $\mathcal{Q}$ and denote $\mathcal{B}_n:= G^{-n}(\mathcal{B})$ for $n \geq 0$. A real function $\psi: \mathcal{Q} \to \mathbb{R}$ is $\mathcal{B}_n$-measurable if and only if there exists a $\mathcal{B}$-measurable function $\psi_n$ satisfying $\psi= \psi_n\circ G^{n}$. Moreover, we have the decreasing inclusion: $\mathcal{B}=\mathcal{B}_0 \supset \mathcal{B}_1\supset \cdots \supset \mathcal{B}_n\supset \cdots$. 
Let $\mathcal{B}_{\infty}$ be the intersection 
      $$  \mathcal{B}_{\infty} = \displaystyle\bigcap_{n \geq 0}\mathcal{B}_n. $$
An invariant probability measure $\mu$ is said to be \emph{exact} if every $\mathcal{B}_{\infty}$-measurable function is constant $\mu$- almost everywhere.

As a first consequence of the exponential decay of correlations we obtain the exactness property of the equilibrium measure associated to the system $(G, \phi_{\ast})$.

\begin{Cor} \label{exactness G}
The equilibrium state $\mu_{\ast}$ is exact. 
\end{Cor}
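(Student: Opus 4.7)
The plan is to deduce exactness directly from Theorem~\ref{decaimento} together with the uniform estimate \eqref{constante K} on the correlation constant. The key observation is that if $A\in\mathcal{B}_{\infty}$ then for each $n\geq 0$ we can write $\mathbf{1}_A=\mathbf{1}_{A_n}\circ G^n$ for some $A_n\in\mathcal{B}$, and by $G$-invariance of $\mu_{\ast}$ we have $\mu_{\ast}(A_n)=\mu_{\ast}(A)$ for every $n$. So the $L^1$-norms of the functions $\mathbf{1}_{A_n}$ do not depend on $n$.

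First I would fix $A\in\mathcal{B}_{\infty}$ and an arbitrary H\"older observable $\psi\in C^{\alpha}(\mathcal{Q})$. Applying Theorem~\ref{decaimento} with $\varphi=\mathbf{1}_{A_n}\in L^1(\mu_{\ast})$ and the observable $\psi$, and using \eqref{constante K} to bound the constant $K(\mathbf{1}_{A_n},\psi)\leq \tilde K\,\mu_{\ast}(A)\bigl(\|\psi\|_{1}+|\psi|_{\alpha,\delta}\bigr)$ uniformly in $n$, one obtains
\[
\Bigl|\int \mathbf{1}_A\cdot\psi\,d\mu_{\ast}-\mu_{\ast}(A)\int\psi\,d\mu_{\ast}\Bigr|
=\Bigl|\int(\mathbf{1}_{A_n}\circ G^n)\psi\,d\mu_{\ast}-\int\mathbf{1}_{A_n}\,d\mu_{\ast}\int\psi\,d\mu_{\ast}\Bigr|\leq C\tau^{n},
\]
with $C$ independent of $n$. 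Letting $n\to\infty$ yields the identity $\int \mathbf{1}_A\psi\,d\mu_{\ast}=\mu_{\ast}(A)\int\psi\,d\mu_{\ast}$ for every $\psi\in C^{\alpha}(\mathcal{Q})$.

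Next I would extend this identity to arbitrary $\psi\in L^{1}(\mu_{\ast})$ by density: H\"older continuous functions are dense in $C^0(\mathcal{Q})$, which is dense in $L^1(\mu_{\ast})$ by regularity of $\mu_{\ast}$. Taking in particular $\psi=\mathbf{1}_A$ gives $\mu_{\ast}(A)=\mu_{\ast}(A)^{2}$, hence $\mu_{\ast}(A)\in\{0,1\}$. This shows $\mathcal{B}_{\infty}$ is $\mu_{\ast}$-trivial, which is equivalent to saying that every $\mathcal{B}_{\infty}$-measurable function is $\mu_{\ast}$-a.e.\ constant.

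The only delicate point is ensuring that the constant in the decay estimate does not blow up in $n$; this is exactly what the refined bound \eqref{constante K} provides, since $\|\mathbf{1}_{A_n}\|_{1}=\mu_{\ast}(A)$ is independent of $n$. Without that refinement one would need an additional approximation step for $\mathbf{1}_A$ itself, but here the argument reduces to a single application of the theorem plus a density argument for the test function $\psi$.
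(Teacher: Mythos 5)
Your proposal is correct and follows essentially the same route as the paper's proof: both exploit the decay of correlations from Theorem~\ref{decaimento} together with the uniform bound \eqref{constante K} applied to the functions $\varphi_n$ with $\varphi_n\circ G^n=\varphi$, whose $L^1$-norms are constant in $n$ by invariance. The only difference is cosmetic --- the paper applies the argument directly to an arbitrary $\mathcal{B}_{\infty}$-measurable $\varphi\in L^1(\mu_{\ast})$ and concludes it is a.e.\ constant, whereas you specialize to indicators $\mathbf{1}_A$ and deduce triviality of $\mathcal{B}_{\infty}$, which is equivalent.
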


\begin{proof}
Given $\varphi \in L^1(\mu_{\ast})$ a $\mathcal{B}_{\infty}$-measurable function, for each $n \geq 0 $ there exists a $\mathcal{B}_n$-measurable function $\varphi_n$ such that $\varphi= \varphi_n \circ G^n$. In particular, $\left\|  \varphi_n \right\|_1 = \left\| \varphi \right\|_1$. By the decay of correlations, Theorem~\ref{decaimento G}, combined with \eqref{constante K} for any H\"older continuous function $\psi$ there exists $K(\psi) > 0$ such that 
\begin{eqnarray*}
\left|\int \left(\varphi - \int \varphi \ d\mu_{\ast} \right)\psi \ d\mu_{\ast}   \right|& =&  \left|  \int\left( \varphi_n \circ G^n \right)\psi \ d\mu_{\ast} - \int \psi \ d\mu_{\ast} \int \varphi \ d\mu_{\ast}  \right|\\\\
&\leq& K(\psi) \left\|\varphi  \right\|_1 \tau^n .
\end{eqnarray*}
Since the last term converges to zero when $n$ goes to infinity we have 
$$\int \left(\varphi - \int \varphi \ d\mu_{\ast} \right)\psi \ d\mu_{\ast} =0. $$
Since $\psi$ is arbitrary it follows that $\varphi = \int \varphi \ d\mu_{\ast} $ is constant $\mu_{\ast}$-almost everywhere.
\end{proof}

Notice that, in particular, the exacteness of $\mu_{\ast}$ implies its ergodicity.

In order to establish a central limit theorem for the equilibrium state of $(G, \phi_{\ast})$ we first state a non-invertible case of an abstract central limit theorem due to Gordin. For its proof one can see e.g. [\cite{Viana}, Theorem 2.11].

\begin{Teo}[Gordin]\label{TCL}
Let $\left( M, \mathcal{F}, \mu \right)$ be a probability space and $f: M\to M$ be a measurable map such that $\mu$ is an invariant ergodic probability measure. Let $\varphi \in L^{2}(\mu)$ such that $\int \varphi \ d\mu =0$. Denote by $\mathcal{B}_n$ the non increasing sequence of $\sigma$-algebras $\mathcal{B}_n = f^{-n}(\mathcal{B})$ and assume 
              $$ \displaystyle\sum_{n=0}^{\infty} \left\|\mathbb{E}(\varphi | \mathcal{B}_n)   \right\|_2 < \infty.$$
Then $\sigma \geq 0$ given by 
    $$\sigma^{2}= \int \varphi^2  \ d\mu + 2\displaystyle\sum_{n=1}^{\infty}  \int \varphi (\varphi \circ f^n) \ d\mu $$
is finite and $\sigma = 0$ if and only if $\varphi = u \circ f - u $ for some $u \in L^{2}(\mu)$. On the other hand, if $\sigma >0 $ then given any interval $A\subset \mathbb{R}$,
 $$\mu \Big(  x\in M : \frac{1}{\sqrt{n}} \displaystyle\sum_{j=0}^{n -1} \varphi(f^j(x))  \in A \Big) \to \frac{1}{\sigma \sqrt{2\pi}} \int_{A} e^{-\frac{t^2}{2\sigma^2}} \ dt$$
as $n$ goes to infinity.  		
\end{Teo}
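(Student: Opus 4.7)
The plan is to prove the theorem by \textbf{Gordin's martingale approximation method}: decompose $\varphi$ in $L^2(\mu)$ as a reverse-martingale-difference plus a coboundary, and then invoke the classical martingale central limit theorem.

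\textbf{Martingale approximation.} Let $U_f g := g\circ f$ denote the Koopman operator, which is an $L^2(\mu)$-isometry by $f$-invariance, and let $U_f^*$ be its adjoint. A standard identity gives $U_f^n(U_f^*)^n g = \mathbb{E}(g\mid\mathcal{B}_n)$, so $\|(U_f^*)^n\varphi\|_2 = \|\mathbb{E}(\varphi\mid\mathcal{B}_n)\|_2$ and the summability hypothesis allows me to set
\begin{equation*}
u := -\sum_{n=1}^{\infty}(U_f^*)^n\varphi \in L^2(\mu), \qquad \psi := \varphi + u\circ f - u.
\end{equation*}
A direct computation using $U_f^*U_f = I$ produces $U_f^*\psi = 0$, equivalently $\mathbb{E}(\psi\mid\mathcal{B}_1) = 0$, so that by stationarity the sequence $\{\psi\circ f^j\}_{j\geq 0}$ is a reverse martingale difference sequence with respect to $\{\mathcal{B}_{j+1}\}$.

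\textbf{Telescoping and the martingale CLT.} Since
\begin{equation*}
\sum_{j=0}^{n-1}\varphi\circ f^j \;=\; \sum_{j=0}^{n-1}\psi\circ f^j \;+\; (u - u\circ f^n)
\end{equation*}
and $\|u - u\circ f^n\|_2\leq 2\|u\|_2$, the coboundary part divided by $\sqrt{n}$ converges to zero in $L^2(\mu)$, hence in probability, so it suffices to prove the CLT for the martingale sum. Ergodicity of $\mu$ together with Birkhoff's theorem yields $n^{-1}\sum_{j<n}(\psi\circ f^j)^2 \to \sigma_\psi^2 := \int\psi^2\,d\mu$ almost surely, which is precisely the hypothesis needed for the stationary ergodic martingale CLT (Billingsley); this gives $n^{-1/2}\sum_{j<n}\psi\circ f^j \Rightarrow \mathcal{N}(0,\sigma_\psi^2)$.

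\textbf{Variance identification, degeneracy, and main obstacle.} Expanding $\int\psi^2\,d\mu$, substituting the series for $u$, and using $f$-invariance together with $\int\varphi(\varphi\circ f^n)\,d\mu = \int\varphi\cdot(U_f^*)^n\varphi\,d\mu$, the cross terms telescope and recombine into the series in the statement, giving $\sigma_\psi^2 = \int\varphi^2\,d\mu + 2\sum_{n\geq 1}\int\varphi(\varphi\circ f^n)\,d\mu = \sigma^2$; in particular $\sigma<\infty$. The zero-variance case is characterized by $\sigma=0\iff \psi=0\iff \varphi = u - u\circ f$ in $L^2(\mu)$, which is equivalent, after negating $u$, to the coboundary form stated. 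The delicate point throughout is that $f$ is only measurable (not invertible), so every step must be performed via the operator calculus of $U_f, U_f^*$ adapted to the decreasing filtration $\mathcal{B}_n = f^{-n}\mathcal{B}$; once this framework is in place, verifying $U_f^*\psi = 0$ and reorganizing the variance series are routine, and the abstract martingale CLT is applied as a black box.
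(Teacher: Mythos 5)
The paper offers no proof of this statement at all---it is quoted as a black box with a citation to Viana's \emph{Stochastic dynamics of deterministic systems}, Theorem 2.11---and your martingale-approximation argument is precisely the standard Gordin proof given in that reference. Your outline is correct: the identity $U_f^n(U_f^*)^n=\mathbb{E}(\cdot\mid\mathcal{B}_n)$ justifies the convergence of $u$, the computation $U_f^*\psi=0$ and the telescoping reduction to the stationary ergodic (reverse) martingale CLT are sound, and the variance identification and coboundary characterization follow as you indicate.
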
 

Now we derive from this result Theorem~\ref{TCL G}.
For each $n\geq 0$ denote by $L^2(\mathcal{B}_n)$ the set $L^2(\mathcal{B}_n)= \left\{  \psi \in L^2(\mu_{\ast}) : \psi \ \mbox{is} \ \mathcal{B}_n-\mbox{measurable} \right\}$. We have a sequence of inclusions $L^2(\mu_{\ast})=L^2(\mathcal{B}_0) \supset L^2(\mathcal{B}_1)\supset \cdots \supset L^2(\mathcal{B}_n) \supset \cdots$.

Since $L^2(\mu_{\ast})$ is a Hilbert space, given $\varphi \in L^2(\mu_{\ast})$ denote by $\mathbb{E}(\varphi|\mathcal{B}_n)$ the orthogonal projection of $\varphi$ to $L^2(\mathcal{B}_n)$. 
Let $\varphi$ be a H\"older continuous function such that $\int \varphi \ d\mu_{\ast} = 0$, then for all $n \geq 0$ we have
\begin{eqnarray*}
\left\| \mathbb{E} (\varphi|\mathcal{B}_n) \right\|_2 & = &\sup \left\{ \int\psi \varphi \ d\mu_{\ast} : \psi \in L^2(\mathcal{B}_n), \left\|\psi \right\|_2 = 1   \right\} \\
&=& \sup \left\{\int (\psi_n \circ G^n )\varphi \ d\mu_{\ast} : \psi_n \in L^2(\mu_{\ast}), \left\|\psi_n \right\|_2 = 1   \right\} \\
&\leq&K(\varphi)\left\|\psi_n \right\|_1 \tau^n  \leq  K(\varphi) \tau^n. %
\end{eqnarray*}

Note that in order to obtain the first inequality we apply the exponential decay of correlations from Theorem~\ref{decaimento G}. We warn the reader that when applying Theorem~\ref{decaimento G}, $\psi_n$ plays the role of $\varphi$ while $\varphi$ plays the role of $\psi$. To get the last inequality we used that $\left\|\psi_n \right\|_1 \leq \left\|\psi_n \right\|_2 = 1$. 

Therefore the series $\displaystyle\sum_{n\geq 0} \left\| \mathbb{E} (\varphi|\mathcal{B}_n) \right\|_2 $ is summable. 

Applying Theorem~\ref{TCL} we get a central limit theorem for the equilibrium state $\mu_{\ast}$ of $(G, \phi_{\ast})$. This proves Theorem~\ref{TCL G}.


\section{Statistical properties for equilibrium of horseshoes}\label{decay for F}

In the last section we have shown that the existence of a spectral gap for the transfer operator associated to the system $(G, \phi_{\ast})$ implies an exponential decay of correlations for the equilibrium $\mu_{\ast}$ on the space of H\"older continuous observables. Moreover, we also derived a central limit theorem for that equilibrium.

In this section we will use these results to derive similar statistical properties for the equilibrium state associated to the horseshoe $(F, \phi)$ defined in Section~\ref{Definitions}. We point out that since $F$ is a diffeomorphism we can consider its inverse $F^{-1}$ and from the way that we have defined the projection map $G$ we will state the results for $F^{-1}$. 

The key idea in this section is to disintegrate the equilibrium state for the horseshoe as a product of the equilibrium state for the system $(G, \phi_{\ast})$ and conditional measures on the stable fibers. For this we will use the following result due to Rohlin \cite{Ro}. The formulation stated here was given in \cite{Simons}.   

\begin{Teo}[Rohlin's Disintegration Theorem]\label{Rohlin}
Let X and Y be metric spaces, each of them endowed  with the  Borel $\sigma$-algebra. Let $\mu$ be a probability measure on $X$, let $\Pi : X \to Y $ be measurable and let $\hat{\mu}= \mu\circ \Pi^{-1}$. Then there exists a system of conditional measures $(\mu_y)_{y\in Y}$ of $\mu$ with respect to $(X;\Pi; Y )$, meaning that
\begin{enumerate}
\item $\mu_y$ is a probability measure on X supported on the fiber $\Pi^{-1}(y)$ for $\hat{\mu}$-almost every $y \in Y$.
 \item  the measures $\mu_y$ satisfy the law of total probability 
    $$ \mu(B) = \int \mu_y (B) \, d\hat{\mu}(y)  $$
for every Borel subset $B$ of $X$.    
\end{enumerate}
These measures are unique in the sense that if $(\nu_y)_{y\in Y}$ is any other system of conditional measures, then $\mu_y = \nu_y$ for $\hat{\mu}$-almost every $y \in Y $.
\end{Teo}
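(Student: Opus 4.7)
The plan is to build the family $(\mu_y)_{y\in Y}$ by combining the Radon--Nikodym theorem with the Riesz representation theorem, using a countable dense family of test functions in order to pin down the conditional measures simultaneously off a single $\hat{\mu}$-null set. Throughout I will assume, as is standard for this theorem, that the ambient spaces are Polish so that regular Borel probability measures are in bijection with positive normalized functionals on $C_b(X)$.

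First I would fix a countable $\mathbb{Q}$-subalgebra $\mathcal{A}\subset C_b(X)$ that is uniformly dense in $C_b(X)$ (guaranteed by the separability of $X$). For each $f\in\mathcal{A}$ define a finite signed measure $\lambda_f$ on $Y$ by
\[
\lambda_f(B)=\int_{\Pi^{-1}(B)} f\,d\mu \qquad \text{for every Borel } B\subset Y.
\]
Since $\hat{\mu}(B)=0$ implies $\mu(\Pi^{-1}(B))=0$, we have $\lambda_f\ll\hat{\mu}$, so Radon--Nikodym provides $g_f\in L^1(\hat{\mu})$ with $\lambda_f(B)=\int_B g_f\,d\hat{\mu}$. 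The key point is that $\mathcal{A}$ is countable, so there is a single $\hat{\mu}$-conull set $Y_0\subset Y$ on which the assignment $f\mapsto g_f(y)$ is simultaneously $\mathbb{Q}$-linear, positive, and satisfies $g_{\mathbf{1}_X}(y)=1$ for every $y\in Y_0$.

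Next, for each $y\in Y_0$ I would extend $f\mapsto g_f(y)$ by uniform continuity to a positive normalized linear functional $\Lambda_y$ on $C_b(X)$; by the Riesz representation theorem there is a unique Borel probability measure $\mu_y$ on $X$ with $\int f\,d\mu_y=\Lambda_y(f)$. To see that $\mu_y$ is supported on $\Pi^{-1}(y)$, observe that for any Borel $B\subset Y$,
\[
\int_Y \mu_y\bigl(\Pi^{-1}(B)\bigr)\,d\hat{\mu}(y) = \mu\bigl(\Pi^{-1}(B)\bigr) = \hat{\mu}(B),
\]
so $\mu_y(\Pi^{-1}(B))=\mathbf{1}_B(y)$ for $\hat{\mu}$-a.e.~$y$; running this over a countable generating algebra of Borel subsets of $Y$ gives a single full-measure set on which $\mu_y$ is concentrated on $\Pi^{-1}(y)$. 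The law of total probability for Borel $B\subset X$ is obtained first for $B=\Pi^{-1}(C)\cap\{f>t\}$ with $f\in\mathcal{A}$ via the defining identity of $g_f$, and then extended by a monotone class / Dynkin argument. Uniqueness follows by the same reasoning: if $(\nu_y)$ is another disintegration, then for each $f\in\mathcal{A}$ and each Borel $B\subset Y$, $\int_B\bigl(\int f\,d\nu_y\bigr)d\hat{\mu}=\int_B g_f\,d\hat{\mu}$, hence $\int f\,d\nu_y=\int f\,d\mu_y$ off a null set; countability of $\mathcal{A}$ and density give $\mu_y=\nu_y$ $\hat{\mu}$-a.e.

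The main obstacle, as is typical in this result, is null-set bookkeeping: the Radon--Nikodym derivatives $g_f$ are defined only up to $\hat{\mu}$-null sets, and linearity, positivity, the support property, and total probability are \emph{a priori} statements holding off an $f$-dependent null set. The whole argument hinges on having only \emph{countably} many such exceptional sets to discard, which is why the reduction to a countable dense subalgebra $\mathcal{A}$ is essential and why the Polish (or at least standard Borel) hypothesis is used. Once a single good set $Y_0$ has been isolated, the pointwise Riesz representation and the routine extension arguments go through without surprise.
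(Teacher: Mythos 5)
The paper does not prove this statement: it is quoted verbatim from the literature, with the proof deferred to Rohlin \cite{Ro} and to Simmons \cite{Simons}, whose formulation is being reproduced. So there is no internal argument to compare yours against; what you have written is the standard functional-analytic construction (Radon--Nikodym for a countable family of test integrands, a single conull set on which $f\mapsto g_f(y)$ is a positive normalized $\mathbb{Q}$-linear functional, Riesz representation pointwise, then monotone class and a countable basis of $Y$ for the support and uniqueness claims). That is the right skeleton, and you are also right to flag that the hypothesis ``metric spaces'' as printed is too weak and that one needs $X$ Polish (or the measure Radon) for the argument to go through --- in the paper's application $X$ is a compact subset of $\mathbb{R}^3$, so this is harmless.

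Two steps need repair as written. First, separability of $X$ does \emph{not} give a countable uniformly dense subalgebra of $C_b(X)$: for noncompact $X$ (e.g.\ $X=\mathbb{N}$, where $C_b(X)=\ell^\infty$) the space $C_b(X)$ is nonseparable, and a positive normalized functional on it need not come from a countably additive Borel measure. The standard fixes are to embed the Polish space $X$ homeomorphically into the Hilbert cube and work with $C$ of the compactification, or to replace ``uniformly dense in $C_b(X)$'' by a countable measure-determining class built from the metric and a countable basis; in the present paper $X$ is compact, so $C(X)$ is separable and your argument applies verbatim. Second, your support argument is a non sequitur as stated: from $\int_Y \mu_y(\Pi^{-1}(B))\,d\hat{\mu}=\int_Y \mathbf{1}_B\,d\hat{\mu}$ alone you cannot conclude $\mu_y(\Pi^{-1}(B))=\mathbf{1}_B(y)$ a.e. You need the localized identity $\int_C \mu_y(\Pi^{-1}(B))\,d\hat{\mu}(y)=\mu(\Pi^{-1}(B)\cap\Pi^{-1}(C))$ for all Borel $C\subset Y$ (which does follow from the law of total probability you establish afterwards, applied with $C=B$ and $C=B^c$ together with $0\le\mu_y(\Pi^{-1}(B))\le 1$); so the order of the argument should be inverted, proving the localized total-probability identity first and deriving the support property from it.
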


We point out that the conditional measures system in the last theorem is given by the weak$^\ast$ limit: 
     $$\mu_{y}= \lim_{\varepsilon\to 0} \mu_{\Pi^{-1}(B(y, \varepsilon))}, $$
where  $\mu_{\Pi^{-1}(B(y, \varepsilon))}$ is the conditional probability relative to ${\Pi^{-1}(B(y, \varepsilon))}$. Notice that $\mu_y$ is supported entirely on the fiber $\Pi^{-1}(y)$. 

In order to relate $F$ and $G$ we consider a projection of the parallelepipeds $\tilde{R}_0$ and $\tilde{R}_1$ onto the planes $P_0$ and $P_1$. See figure~\ref{planos2}.

\begin{figure}[!htb]
\centering
\includegraphics[scale=0.25]{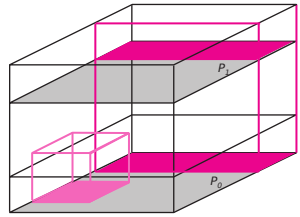}
\caption{Horizontal planes}\label{planos2}
\end{figure}

Define $\pi:\tilde{R}_1 \cup \tilde{R}_2  \to P_0 \cup P_1$ by 
$$\pi (x,y,z) := \left\{ \begin{array}{rc}
(x,y,0),  &\mbox{if}  \quad (x,y,z) \in \tilde{R}_0 \\
(x,y,\frac{5}{6}),  &\mbox{if}  \quad (x,y,z) \in \tilde{R}_1.
\end{array}
\right.
$$

Note that for each plane $z=z_0$ we have that $\pi^{-1}\circ \pi|_{z=z_0}= Id|_{z=z_0}$.
It is straightforward to check that $\pi$ is continuous, surjective and  $\pi \circ F^{-1}= G \circ \pi.$
Thus $\pi$ is a semiconjugacy between $F^{-1}$ and $G$.

Let $\phi: \tilde{R}_1 \cup \tilde{R}_2 \rightarrow\mathbb{R}$ be a Hölder continuous potential that does not depend on the $z$-coordinate, i.e.,  $\phi(x,y,\cdot):\tilde{R}_1 \cup \tilde{R}_2 \rightarrow\mathbb{R}$ is a constant function for every $x,y$ fixed. Hence $\phi$ induces a Hölder continuous potential $\phi_{\ast}:\mathcal{Q}\rightarrow\mathbb{R}$ defined on $\mathcal{Q}$ by \begin{equation}\label{potential induced}
\phi_{\ast}:=\phi\circ\pi^{-1}
\end{equation} 
which has the same variation as $\phi$.

In \cite{RS15} it was proved that when the variation of $\phi$ is smaller than $\frac{\log{\omega}}{2}$ there exists a unique equilibrium state $\mu_{\phi}$ associated to the horseshoe. Moreover, denoting by $\mu_{\ast}$ the equilibrium state of $(G, \phi_{\ast})$ where $\phi_{\ast}$ is given by \eqref{potential induced} this measure is the push-forward by $\pi$ of $\mu_{\phi}$. In other words for every Borel set $A$ of the $\sigma$-algebra on $\mathcal{Q}$ we have $$\mu_{\ast}(A)=\mu_{\phi}(\pi^{-1}(A)).$$

Recall that here the potential $\phi_{\ast}$ also satisfies condition~\eqref{condição potencial}. 

Consider $\Pi: \tilde{R}_1 \cup \tilde{R}_2  \to [0,1]$ the projection in the third coordinate $\Pi(x,y,z)=z$. 
Applying Rohlin's theorem we have for every Borel subset $B$ of $\tilde{R}_1 \cup \tilde{R}_2 $
   $$\mu_{\phi}(B)=\int \mu_z(B) \, d\hat{\mu}(z)$$
where $(\mu_{z})_{z\in [0,1]}$ is the system of conditional measures for the disintegration of  $\mu_{\phi}$ with respect to $(\tilde{R}_1 \cup \tilde{R}_2 , \Pi,[0,1])$.
In the next lemma we relate this system with the equilibrium state $\mu_{\ast}$ of the projection map.

\begin{Lema}\label{disintegration} Given $z\in [0, 1]$ and $A\subset\mbox{supp}(\mu_{z})$ a Borel subset of $\tilde{R}_1 \cup \tilde{R}_2 $ we have $$ \mu_{\ast}(\pi(A))=\mu_{z}(A).$$
\end{Lema}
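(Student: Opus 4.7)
The strategy is to invoke the uniqueness part of Rohlin's disintegration theorem (Theorem~\ref{Rohlin}). Concretely, I would define a candidate family of measures $(\tilde\nu_z)_{z\in[0,1]}$ on the fibers of $\Pi$ by
$$\tilde\nu_z(A):=\mu_{\ast}(\pi(A))$$
for every Borel $A\subset \Pi^{-1}(z)\cap(\tilde R_0\cup \tilde R_1)$, using that for $z\in[0,1/6]$ (resp.\ $z\in[5/6,1]$) the fiber $\Pi^{-1}(z)\cap \tilde R_0$ (resp.\ $\cap\,\tilde R_1$) maps bijectively onto the plane $P_0$ (resp.\ $P_1$) under $\pi$. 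The plan is to verify that $(\tilde\nu_z)$ fulfills the two defining properties of a disintegration of $\mu_\phi$ with respect to $(\tilde R_0\cup\tilde R_1,\Pi,[0,1])$, and then conclude from the uniqueness clause of Rohlin's theorem that $\tilde\nu_z=\mu_z$ for $\hat\mu$-a.e.\ $z$.

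The support condition is built in by construction. The heart of the argument is the law of total probability: for every Borel $B\subset \tilde R_0\cup \tilde R_1$,
$$\mu_\phi(B)=\int \mu_{\ast}(\pi(B\cap\Pi^{-1}(z)))\,d\hat\mu(z).$$
My plan is to verify this on the generating algebra of product sets $B=C\times I$, where $C\subset [0,1]^2$ is Borel and $I$ is a subinterval of $[0,1/6]$ or $[5/6,1]$. On such a rectangle the right-hand side collapses to $\mu_{\ast}(\pi(C\times\{z_0\}))\cdot\hat\mu(I)$ for any fixed $z_0\in I$ (since $\pi$ is independent of the $z$-coordinate), so the identity amounts to a product decomposition of $\mu_\phi|_{\tilde R_i}$ into an $(x,y)$-marginal (transported to $\mu_{\ast}|_{P_i}$ by $\pi$) and a $z$-marginal (equal to $\hat\mu$). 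Once established on the algebra of rectangles, a standard monotone-class argument extends the identity to all Borel subsets.

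The main obstacle is the product decomposition itself. I would derive it from the two structural features available: $F$ acts affinely in the $z$-direction on each $\tilde R_i$, namely $z\mapsto\beta z$ on $\tilde R_0$ and $z\mapsto\beta_1(z-5/6)$ on $\tilde R_1$; and the potential is $z$-independent, so that $\phi=\phi_{\ast}\circ\pi$ and the Birkhoff sums $S_n\phi$ along an orbit of $F$ are determined entirely by the $(x,y)$-projection of the orbit. Combined with the semiconjugacy $\pi\circ F^{-1}=G\circ\pi$ and the push-forward identity $\pi_{\ast}\mu_\phi=\mu_{\ast}$ established in \cite{RS15}, these facts should force the conditional measures of $\mu_\phi$ along fibers of $\Pi$ to be independent of $z$ (within each piece $\tilde R_i$) after transport by $\pi$, yielding the required product structure and hence the desired equality $\mu_{\ast}(\pi(A))=\mu_z(A)$.
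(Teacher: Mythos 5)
Your strategy is genuinely different from the paper's. The paper argues directly: since $A$ lies in the single plane $\{z=z_0\}$ it writes $\mu_{\ast}(\pi(A))=\mu_{\phi}(A)$ using $\pi_{\ast}\mu_{\phi}=\mu_{\ast}$, applies the law of total probability, and reads off $\mu_{z_0}(A)$ in three lines. You instead build a candidate family $\tilde\nu_z(A)=\mu_{\ast}(\pi(A))$ and invoke the uniqueness clause of Theorem~\ref{Rohlin}. That scheme is legitimate and, if completed, would actually be more careful than the paper's computation (which silently identifies $A$ with the cylinder $\pi^{-1}(\pi(A))$ and asserts $\mu_z=\mu_{z_0}$ on that cylinder for all $z$ --- itself a fiber-independence statement). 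Note also that uniqueness only yields $\tilde\nu_z=\mu_z$ for $\hat\mu$-a.e.\ $z$, which is all one can expect of a statement about conditional measures.

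However, there is a genuine gap, and you have located it yourself without closing it. The entire content of the lemma is the identity
\begin{equation*}
\mu_{\phi}(B)=\int \mu_{\ast}\bigl(\pi(B\cap\Pi^{-1}(z))\bigr)\,d\hat\mu(z),
\end{equation*}
equivalently the local product structure $\mu_{\phi}|_{\tilde R_i}=\mu_{\ast}|_{\pi(\tilde R_i)}\otimes\hat\mu$; your reduction to rectangles $C\times I$ plus a monotone-class argument is fine, but it merely restates the lemma as this product decomposition. The features you then list --- affine action of $F$ in the $z$-direction, $z$-independence of $\phi$, the semiconjugacy $\pi\circ F^{-1}=G\circ\pi$, and $\pi_{\ast}\mu_{\phi}=\mu_{\ast}$ --- do not by themselves force it. Knowing both marginals $\pi_{\ast}\mu_{\phi}=\mu_{\ast}$ and $\Pi_{\ast}\mu_{\phi}=\hat\mu$ only says that $\mu_{\phi}$ is \emph{some} joining of $\mu_{\ast}$ and $\hat\mu$, i.e.\ that $\mu_{\ast}$ is the $\hat\mu$-average of the measures $\pi_{\ast}\mu_z$; it does not imply that each $\pi_{\ast}\mu_z$ equals $\mu_{\ast}$ (a measure concentrated on a graph over the $z$-coordinate has the right marginals and violates the conclusion). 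To close the argument you must bring in how $\mu_{\phi}$ is actually produced from $\mu_{\ast}$ in \cite{RS15} --- e.g.\ as the unique $F$-invariant lift of $\mu_{\ast}$ through $\pi$, with conditionals along the uniformly expanded $z$-fibers pinned down by the dynamics --- or some equivalent characterization that yields the product form. As written, the sentence ``these facts should force the conditional measures to be independent of $z$'' is the assertion to be proved, not a proof of it.
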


\begin{proof} Fixing $z_{0}\in[0,1]$ and given $A\subset\mbox{supp}(\mu_{z_{0}})$ we have that $A$ is a subset of the plane $\{z=z_{0}\}$. Therefore
$$ \mu_{\ast}(\pi(A))=\mu_{\phi}(A)= \int \mu_{z}(A)\ d\mu(z)=\int \mu_{z_{0}}(A)\ d\mu(z)
=\mu_{z_{0}}(A).
$$
\end{proof}

Now we are able to prove the exponential decay of correlations for the equilibrium state $\mu_{\phi}$ associated to the horseshoe.

\begin{Teo}
The probability measure $\mu_{\phi}$ has exponential decay of correlations for H\"older continuous observables: there exists $0< \tau <1$ such that for every  $\varphi \in L^{1}(\mu_{\phi})$ and $\psi \in C^{\alpha}(\tilde{R}_1 \cup \tilde{R}_2 )$ there exists $K(\varphi, \psi)>0$ such that
$$\left| \int \left(  \varphi \circ F^{-n} \right)\psi d\mu_{\phi} - \int \varphi d\mu_{\phi} \int \psi d \mu_{\phi}\right| \leq K(\varphi, \psi)\tau^{n}  \quad \forall n\geq 1. $$
\end{Teo}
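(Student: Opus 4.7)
The plan is to push the exponential decay of correlations from $(G,\mu_{\ast})$, established in Theorem~\ref{decaimento}, to $(F^{-1},\mu_{\phi})$ by combining the semiconjugacy $\pi\circ F^{-1}=G\circ\pi$ with the disintegration $\mu_{\phi}=\int\nu_{q}\,d\mu_{\ast}(q)$ of $\mu_\phi$ along the $\pi$-fibers. These fibers are local stable leaves for $F^{-1}$, contracted at a uniform rate $\lambda:=\max\{1/\beta,\,1/\beta_{1}\}<1$. I would first reduce to $\varphi$ H\"older by approximating $\varphi\in L^{1}(\mu_{\phi})$ by a H\"older $\varphi_{\epsilon}$; the $F$-invariance of $\mu_{\phi}$ controls the correlation contribution of the remainder by $2\epsilon\|\psi\|_{\infty}$.

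Next, I would split $\psi=\Psi\circ\pi+\delta\psi$, with $\Psi(q):=\int\psi\,d\nu_{q}$ (H\"older on $\mathcal{Q}$) and $\delta\psi$ having zero conditional mean on each $\pi$-fiber. For the fluctuation piece $\int(\varphi\circ F^{-n})\,\delta\psi\,d\mu_{\phi}$, the contraction of $F^{-n}$ along $\pi$-fibers forces $\varphi\circ F^{-n}$ to be nearly constant on each fiber, with oscillation at most $|\varphi|_{\alpha}\lambda^{n\alpha}$; combined with the zero-mean property of $\delta\psi$, this collapses the integral to $O(|\varphi|_{\alpha}\|\psi\|_{\infty}\lambda^{n\alpha})$.

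For the main piece $\int(\varphi\circ F^{-n})(\Psi\circ\pi)\,d\mu_{\phi}$, I would introduce the equivariant section $\tilde p\colon\mathcal{Q}\to\Omega$ of $\pi$ obtained by picking the ``bottom'' representative of each $\pi$-fiber (namely $z=0$ on $P_{0}$ and $z=5/6$ on $P_{1}$). A direct computation with the formulas for $F_{0}^{-1}$ and $F_{1}^{-1}$ shows that $F^{-1}\circ\tilde p=\tilde p\circ G$. Setting $\tilde\varphi:=\varphi\circ\tilde p$ (H\"older on $\mathcal{Q}$), the $\pi$-fiber contraction promotes the pointwise identity $\tilde\varphi\circ G^{n}=\varphi\circ F^{-n}\circ\tilde p$ to the uniform bound
\[
  \bigl|(\varphi\circ F^{-n})(p)-(\tilde\varphi\circ G^{n})(\pi(p))\bigr|\le|\varphi|_{\alpha}\lambda^{n\alpha}\quad\text{for all }p\in\Omega.
\]
Disintegrating against $\mu_{\phi}$ then reduces the main piece to $\int(\tilde\varphi\circ G^{n})\,\Psi\,d\mu_{\ast}$ up to an error of order $\lambda^{n\alpha}$, and Theorem~\ref{decaimento} gives decay at rate $\tau^{n}$ towards $\int\tilde\varphi\,d\mu_{\ast}\cdot\int\Psi\,d\mu_{\ast}$.

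The identifications $\int\Psi\,d\mu_{\ast}=\int\psi\,d\mu_{\phi}$ and $\int\tilde\varphi\,d\mu_{\ast}=\int\varphi\,d\mu_{\phi}$ close the argument: the former is built into the definition of $\Psi$, while the latter follows by letting $n\to\infty$ in the chain $\int\varphi\,d\mu_{\phi}=\int(\varphi\circ F^{-n})\,d\mu_{\phi}=\int(\tilde\varphi\circ G^{n})\,d\mu_{\ast}+O(\lambda^{n\alpha})=\int\tilde\varphi\,d\mu_{\ast}+O(\lambda^{n\alpha})$, invoking the uniform estimate above, the disintegration, and the $G$-invariance of $\mu_{\ast}$. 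Putting everything together yields the claimed decay with rate $\max\{\tau,\lambda^{\alpha}\}$ and constant $K(\varphi,\psi)$ depending on $\|\varphi\|_{1}$ and the H\"older data of $\psi$. The main technical obstacle I foresee is justifying that $\tilde p$ genuinely intertwines $F^{-1}$ and $G$ on all of $\Omega$; this depends on the fact that the backward symbolic itinerary of a point of $\Omega$ is determined by its $\pi$-image alone, a property specific to the partially hyperbolic geometry of this horseshoe.
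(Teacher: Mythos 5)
Your strategy --- disintegrate $\mu_\phi$ along the $\pi$-fibers, split $\psi$ into a fiber average plus a zero-mean fluctuation, and push the main term down to $(G,\mu_\ast)$ via an equivariant section --- is genuinely different from the paper's, which disintegrates $\mu_\phi$ over the $z$-coordinate instead, observes that every conditional measure $\mu_z$ pushes forward under $\pi$ to $\mu_\ast$ (Lemma~\ref{disintegration}), and applies Theorem~\ref{decaimento} plane by plane with the uniform constant from \eqref{constante K}. Unfortunately your route has a gap at its central step: \emph{no} section $\tilde p\colon\mathcal{Q}\to\Omega$ satisfying $F^{-1}\circ\tilde p=\tilde p\circ G$ can exist. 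Indeed, if it did, then for $q\neq q'$ with $G(q)=G(q')$ (such pairs exist, since points of $R_1$ and $R_2$ have two $G$-preimages) the injectivity of $F^{-1}$ applied to $F^{-1}(\tilde p(q))=\tilde p(G(q))=\tilde p(G(q'))=F^{-1}(\tilde p(q'))$ gives $\tilde p(q)=\tilde p(q')$, hence $q=\pi(\tilde p(q))=\pi(\tilde p(q'))=q'$, a contradiction. Concretely, your ``bottom'' section already fails on $P_1$: $F_0^{-1}$ sends a point at height $z=5/6$ to height $5/(6\beta)\neq 0$, which is not the bottom of the image fiber. Without exact equivariance the estimate $|(\varphi\circ F^{-n})(p)-(\tilde\varphi\circ G^{n})(\pi(p))|\le|\varphi|_{\alpha}\lambda^{n\alpha}$ collapses: $F^{-n}(p)$ and $\tilde p(G^{n}(\pi(p)))$ do lie in the same $\pi$-fiber, but that fiber is a Cantor set of fixed diameter (it parametrizes the backward $G$-orbits of its base point), so their distance is only $O(1)$. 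Fiber contraction gives $d\bigl(F^{-n}(p),F^{-n}(\tilde p(\pi(p)))\bigr)\le\lambda^{n}\cdot\mathrm{diam}$, and the missing identity $F^{-n}\circ\tilde p=\tilde p\circ G^{n}$ is exactly what you would need to convert this into a statement about $\tilde\varphi\circ G^{n}$. The property you invoke to save it --- that the backward itinerary is determined by $\pi(p)$ --- is false here: the $z$-coordinate within a fiber encodes the past of the $G$-orbit, which is not determined by $\pi(p)$.

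A second, independent issue is the reduction from $\varphi\in L^{1}(\mu_\phi)$ to a H\"older $\varphi_\epsilon$: for fixed $\epsilon$ it yields only a bound $K(\varphi_\epsilon,\psi)\tau^{n}+2\epsilon\|\psi\|_{\infty}$, which is not exponential in $n$ unless you couple $\epsilon$ to $n$ and control how $|\varphi_\epsilon|_{\alpha}$ (which enters all your error terms) grows as $\epsilon\to0$; you do neither. The paper sidesteps both problems because in its argument $\varphi$ is never H\"older-estimated: it is composed with $F^{-n}$, restricted to a horizontal plane $\{z=z_0\}$ (on which $\pi$ is injective, so no section is needed), and fed into Theorem~\ref{decaimento} as the $L^{1}$ observable, while only $\psi$ must be H\"older. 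Your treatment of the fluctuation $\delta\psi$ and the identification $\int\Psi\,d\mu_\ast=\int\psi\,d\mu_\phi$ are sound, but the main piece must be reworked, either along the lines of the paper or by approximating $\varphi\circ F^{-n}$ by functions constant on $\pi$-fibers in a way that does not presuppose an equivariant section.
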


\begin{proof}
Let $\varphi \in L^1(\mu_{\phi})$ and $\psi\in C^{\alpha}(\tilde{R}_1 \cup \tilde{R}_2 )$ such that $\int \psi d\mu_{\phi}=1$. For each $n \in \mathbb{N}$ using Lemma~\ref{disintegration} we have
\begin{eqnarray*}
&&\left| \int\!\left(\varphi \circ F^{-n} \right)\psi d\mu_{\phi} - \int\! \varphi d\mu_{\phi}\int\!\psi d \mu_{\phi}\right| \\ \\ 
&=&\!\! \left| \int\! \left(\varphi \circ F^{-n} \right)\psi \ d\mu_{\phi} - \int\!\varphi d\mu_{\phi}\right| \\ \\
&\!\!=\!\!& \left| \int\!\!\int\! \left((\varphi \circ F^{-n} )\psi -\varphi \right) \, d\mu_z d \hat{\mu}(z)\right|\\\\
&\!\!=\!\!&\left|\int\!\! \int\!(\varphi\circ \pi^{-1}\circ \pi \circ F^{-n})(\psi \circ \pi^{-1}\circ \pi) - (\varphi\circ \pi^{-1}\circ \pi)  \,d\mu_z d \hat{\mu}(z)  \right| \\ \\
&\!\!=\!\!&\left|\int\!\! \int\!(\varphi\circ \pi^{-1}\circ G^{n} \circ \pi )(\psi \circ \pi^{-1}\circ \pi) - (\varphi \circ \pi^{-1}\circ \pi) \, d\mu_{z}  d\hat{\mu}(z)\right| \\ \\
&\!\!=\!\!&\left|\int\!\! \int\!\left((\varphi\circ \pi^{-1}\circ G^{n} )(\psi \circ \pi^{-1}) - (\varphi \circ \pi^{-1})\right)\circ \pi \, d\mu_{z}  d\hat{\mu}(z) \right| \\ \\
&\!\!=\!\!&\left|\int\!\! \int\!\left((\varphi\circ \pi^{-1}\circ G^{n} )(\psi \circ \pi^{-1}) - (\varphi \circ \pi^{-1})\right) \, d\mu_{\ast}  d\hat{\mu}(z) \right| \\ \\
&\!\!=\!\!&\left|\int\!\! \int\! \left( (\varphi_z\circ G^{n} )\psi_z - \varphi_z \right)  \, d\mu_{\ast} d\hat{\mu}(z) \right| 
\end{eqnarray*}

Note that for each $z$ fixed  $\psi_z= \psi \circ \pi^{-1}$ is a H\"older continuous function on $\mathcal{Q}$ and $\int \psi_{z}\ d\mu_{\ast}=\int \psi\ d\mu_{\phi}=1$. Also, $\varphi_{z}= \varphi\circ \pi^{-1}$ belongs to $L^1(\mu_{\ast})$. Then by the exponential decay of correlations property of $\mu_{\ast}$, Theorem~\ref{decaimento G}, there exists a positive constant $K(\varphi_z, \psi_z)$ and $0<\tau<1$ such that
\begin{eqnarray*}
\left|\int\!\! \int\! \left( (\varphi_z\circ G^{n} )\psi_z - \varphi_z \right) \, d\mu_{\ast} d\hat{\mu}(z) \right| &\leq& \int  K(\varphi_z,  \psi_z)\tau^{n} \, d\hat{\mu}(z)\\
&\leq& K(\varphi, \psi)\tau^{n}. 
\end{eqnarray*}
where $K(\varphi, \psi)$ is a uniform bound (in $z$) for $K(\varphi_z, \psi_z)$ obtained from equation~\eqref{constante K}.

We proved the desired inequality when $\psi\in C^{\alpha}(\tilde{R}_1 \cup \tilde{R}_2 )$ satisfies  $\int{\psi} d\mu_{\phi}=1$. For the general case it is enough to observe that
$$\left|\int\!(\varphi \circ F^n)\psi d\mu_{\phi} - \int\! \varphi d\mu_{\phi}\!\int\!\psi d \mu_{\phi}\right|
=\left|\int\!\psi d\mu_{\phi}\right|\!\left| \int\!(\varphi\circ F^n)\frac{\psi}{\int\!\psi d \mu_{\phi}} d\mu_{\phi} - \int\!\varphi d\mu_{\phi}\right|.$$
This ends the proof.
\end{proof}

Using equation~\eqref{constante K} and the $F$-invariance of  the equilibrium $\mu_\phi$ we can prove Theorem~\ref{decaimento F} from the last result:
\begin{eqnarray*}
&&\left| \int\!\left(\varphi \circ F^{n} \right)\psi d\mu_{\phi} - \int\! \varphi d\mu_{\phi}\int\!\psi d \mu_{\phi}\right| \\
&=&  \left| \int\!\left(\varphi \circ F^{2n}\circ F^{-n} \right)\psi d\mu_{\phi} - \int\! \varphi \circ F^{2n} d\mu_{\phi}\int\!\psi d \mu_{\phi}\right| \\ \\
&\leq &  K(\psi) \|\varphi \circ F^{2n}\|_{1} \, \tau^{n} = K(\psi) \|\varphi \|_{1} \, \tau^{n}.
\end{eqnarray*}

Since we have showed exponential decay of correlations for the equilibrium state $\mu_{\phi}$ it is straightforward to check that the same steps of the proof of exactness for the equilibrium associated to projection map $G$ (Corollary~\ref{exactness G}) hold in this context.

\begin{Cor} The equilibrium state $\mu_{\phi}$ is exact.
\end{Cor}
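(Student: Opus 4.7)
The plan is to mirror, essentially verbatim, the argument of Corollary~\ref{exactness G} for the projection map, substituting $F$ for $G$, $\mu_{\phi}$ for $\mu_{\ast}$, and Theorem~\ref{decaimento F} for Theorem~\ref{decaimento G}. Let $\mathcal{B}$ be the Borel $\sigma$-algebra of $\tilde{R}_0\cup\tilde{R}_1$ and $\mathcal{B}_n = F^{-n}(\mathcal{B})$, so that $\mathcal{B}_\infty = \bigcap_{n\geq 0}\mathcal{B}_n$. Given an arbitrary $\mathcal{B}_{\infty}$-measurable $\varphi \in L^{1}(\mu_{\phi})$, for each $n \geq 0$ there exists a $\mathcal{B}$-measurable $\varphi_{n}$ with $\varphi = \varphi_{n}\circ F^{n}$. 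By $F$-invariance of $\mu_{\phi}$, we have $\|\varphi_{n}\|_{1} = \|\varphi\|_{1}$ and $\int \varphi_{n}\,d\mu_{\phi} = \int \varphi\,d\mu_{\phi}$.

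The key step is then to couple this decomposition with the decay of correlations. Fix any H\"older continuous test $\psi$ and apply Theorem~\ref{decaimento F}, together with the split form~\eqref{constante K} of the resulting constant, to $\varphi_{n}$ and $\psi$:
$$\left| \int (\varphi_{n}\circ F^{n})\,\psi \, d\mu_{\phi} - \int \varphi_{n}\,d\mu_{\phi} \int \psi\,d\mu_{\phi} \right| \leq K(\psi)\,\|\varphi_{n}\|_{1}\,\tau^{n} = K(\psi)\,\|\varphi\|_{1}\,\tau^{n}.$$
Substituting $\varphi_{n}\circ F^{n}=\varphi$ and $\int \varphi_{n}\,d\mu_{\phi} = \int \varphi\,d\mu_{\phi}$, the left-hand side equals $\bigl|\int (\varphi - \int \varphi\,d\mu_{\phi})\,\psi\,d\mu_{\phi}\bigr|$, which is independent of $n$, while the right-hand side tends to $0$ as $n\to\infty$. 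Hence
$$\int \Bigl(\varphi - \int \varphi\,d\mu_{\phi}\Bigr)\,\psi\,d\mu_{\phi} = 0 \quad\text{for every H\"older continuous }\psi.$$
Since H\"older continuous functions are dense in $L^{2}(\mu_{\phi})$, one concludes $\varphi = \int \varphi\,d\mu_{\phi}$ $\mu_{\phi}$-almost everywhere, which is precisely the exactness of $\mu_{\phi}$.

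The only nontrivial ingredient is the factorization $K(\varphi_n,\psi)\leq K(\psi)\|\varphi_n\|_1$: it is crucial that the constant in the decay estimate separates in this multiplicative way, so that after using $F$-invariance to replace $\|\varphi_n\|_1$ by $\|\varphi\|_1$, the bound remains uniform in $n$. This is exactly the content of~\eqref{constante K}, which was derived in the projection-map setting from the spectral gap and transferred to the horseshoe via the disintegration argument of Section~\ref{decay for F}; consequently it is available for $F$ without further work. Once this is recorded, the proof is a word-for-word transcription of Corollary~\ref{exactness G}, and no genuine obstacle remains.
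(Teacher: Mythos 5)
Your proof is correct and takes essentially the same route as the paper: the paper's own justification is precisely the remark that the argument of Corollary~\ref{exactness G} carries over word for word once exponential decay of correlations with the factorized constant $K(\psi)\|\varphi\|_{1}$ from~\eqref{constante K} is available for $F$, which is exactly the transcription you give.
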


Finally consider $\mathcal{B}$ the Borel $\sigma$-algebra of $\tilde{R}_1 \cup \tilde{R}_2 $ and $\mathcal{B}_{n}$ the decreasing sequence defined by $\mathcal{B}_{n}=F^{-n}(\mathcal{B})$ for every $n\in\mathbb{N}.$ Let $\varphi$ be a H\"older continuous function satisfying $\int \varphi d\mu_{\phi}=0.$ Using the exponential decay of correlations of  $\mu_{\phi}$ we obtain that the series $\sum_{n\geq 0} \left\| \mathbb{E} (\varphi|\mathcal{B}_n) \right\|_2 $ is summable, where $\mathbb{E} (\varphi|\mathcal{B}_n)$ is the orthogonal projection of $\varphi$ to $L^{2}(\mathcal{B}_{n})$ for each $n\in\mathbb{N}$.

Applying Gordin's theorem we deduce that a central limit theorem holds for $\mu_{\phi}.$ Thus we have finished the proof of Theorem~\ref{TCL F}.


\subsection*{Acknowledgments} This work was carried out at Universidade do Porto. The authors are very thankful to Silvius Klein for the help with the manuscript version and encouragement. VR is grateful to Ivaldo Nunes for the encouragement.

\end{document}